\newcommand{\mb}{\mathbb}
\DeclareMathOperator{\Hom}{Hom}
\DeclareMathOperator{\HA}{H_A}
\DeclareMathOperator{\HAo}{H_{A^{op}}}
\DeclareMathOperator{\Hk}{H_k}
\DeclareMathOperator{\RHom}{RHom}
\DeclareMathOperator{\R}{R}
\DeclareMathOperator{\hdet}{hdet}
\DeclareMathOperator{\id}{id}
\DeclareMathOperator{\Tr}{Tr}
\newcommand{\wh}{\widehat}
\DeclareMathOperator{\lgr}{\!-gr}
\DeclareMathOperator{\lGr}{\!-Gr}
\newcommand{\Z}{\mathbb{Z}}
\newcommand{\C}{\mathcal{C}}
\newcommand{\D}{\mathcal{D}}
\newcommand{\mc}{\mathcal}
\newcommand{\mf}{\mathfrak}
\renewcommand{\1}{\mathbf{1}}
\newcommand{\gr}{\ensuremath{{}^\mathrm{gr}}}
\newcommand{\perf}{\mathrm{perf}}
\newcommand{\fm}{\mathfrak{m}}
\DeclareMathOperator{\Ext}{Ext} 
\DeclareMathOperator{\Aut}{Aut}
\newcommand{\Autz}{\Aut_{\mathbb Z}}
\newcommand{\Autw}{\Aut_{{\mathbb Z}^w}}
\DeclareMathOperator{\bfl}{\mathfrak{l}} 
\DeclareMathOperator{\Tor}{Tor} 
\newcommand{\wt}{\widetilde}
\numberwithin{equation}{section}
\theoremstyle{plain}
\newtheorem{theorem}[equation]{Theorem}
\newtheorem{corollary}[equation]{Corollary}
\newtheorem{lemma}[equation]{Lemma}
\newtheorem{proposition}[equation]{Proposition}
\newtheorem{question}[equation]{Question}
\newtheorem{conjecture}[equation]{Conjecture}
\theoremstyle{definition}
\newtheorem{definition}[equation]{Definition}
\newtheorem{remark}[equation]{Remark}
\newtheorem{hypothesis}[equation]{Hypothesis}
\begin{document}

\title[Skew Calabi-Yau triangulated categories]
{Skew Calabi-Yau triangulated categories\\ and Frobenius Ext-algebras}

\author{Manuel Reyes}
\address{Department of Mathematics\\ Bowdoin College\\
8600 College Station\\ Brunswick, ME 04011-8486\\ USA}
\email{reyes@bowdoin.edu}

\author{Daniel Rogalski}
\address{UCSD\\ Department of Mathematics\\ 9500 Gilman Dr. \# 0112 \\
La Jolla, CA 92093-0112\\ USA}
\email{drogalsk@math.ucsd.edu}

\author{James J. Zhang}
\address{University of Washington\\ Department of Mathematics\\ Box 354350\\
Seattle, WA 98195-4350\\ USA}
\email{zhang@math.washington.edu}

\subjclass[2010]{Primary 18E30, 16E35; Secondary 16E65, 16L60, 16S38.}


\keywords{Skew Calabi-Yau, twisted Calabi-Yau, triangulated category, Frobenius algebra, homological determinant,
AS regular algebra.
}


\date{February 22, 2016}

\begin{abstract}
We investigate conditions that are sufficient to make the Ext-algebra 
of an object in a (triangulated) category into a Frobenius algebra,
and compute the corresponding Nakayama automorphism. As an application,
we prove the conjecture that $\hdet(\mu_A) = 1$ for any noetherian 
Artin-Schelter regular (hence skew Calabi-Yau) algebra $A$.
\end{abstract}

\maketitle

\section{Introduction}
\label{xxsec1}

Let $k$ be an algebraically closed field. The goal of this paper is to 
study some categories with nice duality theories that arise
from Artin-Schelter (or AS for short) Gorenstein $k$-algebras. 
By studying the Ext-algebras of objects in these categories we will 
obtain several interesting consequences for such Gorenstein algebras. 
This paper is a sequel to \cite{RRZ}.  In particular, 
we make significant progress on a conjecture in \cite{RRZ} about the 
homological determinant of a Nakayama automorphism (see 
Theorem~\ref{xxthm1.3} below).

An especially pleasing form of duality is encapsulated in the 
definition of a \emph{Calabi-Yau triangulated category}~\cite{Keller}. 
This is a Hom-finite $k$-linear triangulated category with a duality 
of the form 
\begin{equation}
\label{E1.0.1}\tag{E1.0.1}
\Hom(X,Y)^* \cong \Hom(Y, \Sigma^d X),
\end{equation}
where $(-)^*$ denotes the $k$-linear dual and $\Sigma$ is the translation 
functor. Of course, the theory of Serre duality is the model for this 
definition, and the bounded derived category of coherent sheaves over a 
smooth projective variety with trivial dualizing sheaf is an important 
example.  Other interesting examples include the bounded derived categories of finite-dimensional modules over 
\emph{Calabi-Yau algebras}~\cite{Ginzburg}.

We are especially interested in derived categories of graded modules 
for those graded algebras that are important in noncommutative 
algebraic geometry, such as AS regular or Gorenstein 
algebras~\cite{ArtinSchelter}, in which case the duality theory above 
does not capture all examples of interest.  For this reason we study more 
general dualities of the form 
\begin{equation}
\label{E1.0.2}\tag{E1.0.2}
\Hom(X,Y)^* \cong \Hom(Y, \Sigma^d T^{\bfl} \Phi(X)),
\end{equation}
where $T$ is an automorphism of a triangulated category (typically 
coming from a shift of grading on modules), $\Phi$ is another 
automorphism called the \emph{Nakayama functor}, and $\Sigma, T$, 
and $\Phi$ all commute. Under some additional technical conditions, 
we call a $k$-linear Hom-finite triangulated category satisfying a duality 
of this form a \emph{skew Calabi-Yau triangulated category} (see 
Section~\ref{xxsec2.5} for the precise definition).


It is well-known that the duality condition~\eqref{E1.0.1} endows each
Ext-algebra $E(X) = \bigoplus_{i=0}^d \Hom(X, \Sigma^i X)$ with the 
structure of a graded-symmetric Frobenius algebra. 
Our first theorem shows 
that this extends in some cases to our more general kind of duality.

\begin{theorem}[Theorem~\ref{xxthm2.7}]
\label{xxthm1.1}
If $\mc{C}$ is a skew Calabi-Yau triangulated category with Nakayama 
functor $\Phi$, and $X$ is an object such that there is an isomorphism 
$\phi: X \to \Phi(X)$, then the Ext-algebra $E(X) = \bigoplus_{i, j} 
\Hom(X, \Sigma^iT^j X)$ has the structure of a bigraded Frobenius algebra.  

Furthermore, there is an explicit formula for the  Nakayama automorphism 
$\mu$ of the Frobenius algebra $E(X)$ in terms of the given isomorphism 
$\phi$.
\end{theorem}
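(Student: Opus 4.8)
The plan is to build the Frobenius form out of the Serre-type trace that is part of the skew Calabi--Yau structure, and then to read off the Nakayama automorphism from the failure of that form to be symmetric. Write $E = E(X) = \bigoplus_{i,j}\Hom(X,\Sigma^iT^jX)$, with multiplication given by composition: for $f\in\Hom(X,\Sigma^iT^jX)$ and $g\in\Hom(X,\Sigma^{i'}T^{j'}X)$ the product is $fg = (\Sigma^iT^jg)\circ f$. The duality \eqref{E1.0.2} identifies $\Hom(X,\Sigma^iT^jX)^*$ with $\Hom(\Sigma^iT^jX,\Sigma^dT^{\bfl}\Phi(X))\cong\Hom(X,\Sigma^{d-i}T^{\bfl-j}\Phi(X))$, and the given isomorphism $\phi\colon X\to\Phi(X)$ (which commutes with $\Sigma$ and $T$) turns the latter into $\Hom(X,\Sigma^{d-i}T^{\bfl-j}X)$. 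Thus each bidegree $(i,j)$ piece is paired with the complementary piece $(d-i,\bfl-j)$, and the ``top'' bidegree $(d,\bfl)$ is singled out as the target of the multiplication pairing. I would define the Frobenius functional $\lambda\colon E\to k$ to vanish on all pieces except the top one, and on $\Hom(X,\Sigma^dT^{\bfl}X)$ to be the composite $f\mapsto \Tr\bigl((\Sigma^dT^{\bfl}\phi)\circ f\bigr)$, where $\Tr\colon\Hom(X,\Sigma^dT^{\bfl}\Phi(X))\to k$ is the trace supplied by the skew Calabi--Yau structure.

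First I would show that the associated bilinear form $\langle f,g\rangle=\lambda(fg)$ is nondegenerate, hence that $E$ is Frobenius with Frobenius form $\lambda$. By construction $\langle f,g\rangle = \Tr\bigl((\Sigma^dT^{\bfl}\phi)\circ(\Sigma^iT^jg)\circ f\bigr)$ for $f,g$ of complementary bidegree, and the point is that this agrees, after transporting along $\phi$, with the canonical evaluation pairing realizing the duality \eqref{E1.0.2}. This identification is exactly a naturality statement for the duality isomorphism in the variable $Y$, together with the fact that $\Sigma$ and $T$ are autoequivalences; granting it, nondegeneracy of $\langle-,-\rangle$ in each bidegree is immediate from nondegeneracy of the duality pairing. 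Since $\lambda$ is supported in a single bidegree, $E$ is a bigraded Frobenius algebra.

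Next I would compute the Nakayama automorphism $\mu$, characterized by $\lambda(fg)=\lambda\bigl(g\,\mu(f)\bigr)$. The asymmetry of $\langle-,-\rangle$ has two independent sources. The first is the Koszul sign rule in a triangulated category: interchanging the roles of $f$ and $g$ in the composition $(\Sigma^iT^jg)\circ f$ forces one to commute shifts past morphisms, producing a sign depending on the $\Sigma$-degrees (of the shape $(-1)^{i(d-i)}$, with the precise exponent to be pinned down). The second is the twist by the Nakayama functor: the defining cyclic property of the trace $\Tr$ relative to the Serre functor $\Sigma^dT^{\bfl}\Phi$ converts $\Tr(\beta\circ\alpha)$ into $\Tr$ of a composite involving $\Phi(\alpha)$, so that moving $f$ across $g$ replaces it by its image under the bigraded algebra automorphism $\nu(f)=(\Sigma^iT^j\phi)^{-1}\circ\Phi(f)\circ\phi$ of $E$ induced by $\Phi$ and $\phi$. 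Combining the two contributions yields the explicit formula $\mu(f)=(\text{sign})\cdot\nu(f)$ on the bidegree $(i,j)$ component.

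The main obstacle I anticipate is not the overall shape of the argument but the exact bookkeeping in the last step: establishing the cyclic/naturality property of $\Tr$ in the precise form needed (this is where the definition in Section~\ref{xxsec2.5} must be used carefully), and tracking the Koszul signs through the various commutations of $\Sigma$, $T$, $\Phi$ and the morphism $\phi$ so as to obtain the correct sign in the formula for $\mu$. A secondary point of care is that $\Hom(X,X)$ need not be one-dimensional, so $\lambda$ must be defined through $\Tr$ rather than by ``evaluation at $\id_X$''; one should check that the top bidegree still behaves correctly as the component carrying the socle in this generality.
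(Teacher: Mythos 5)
Your proposal is correct and follows essentially the same route as the paper: your functional $\lambda$ is exactly the paper's form $(f,g)=\Tr_X(\phi * f * g)$, nondegeneracy is obtained just as you say from the nondegeneracy of the Serre trace pairing together with $\phi$, $\Sigma$, $T$ being isomorphisms, and the Nakayama automorphism $g \mapsto (-1)^{(d+1)i}\,\Sigma^iT^j(\phi)^{-1}\circ\Phi(g)\circ\phi$ falls out of the cyclic trace property exactly as you outline. The sign bookkeeping you defer is precisely the content of the paper's Lemma~\ref{xxlem2.6} (bigraded Serre duality, built from \eqref{E2.2.1} and Lemma~\ref{xxlem2.4}(3)), and your guessed sign $(-1)^{i(d-i)}$ agrees modulo $2$ with the correct $(-1)^{i(d+1)}$.
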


The main examples we have in mind are certain subcategories of derived categories of 
modules over Gorenstein rings.  Recall that an $\mb{N}$-graded 
$k$-algebra $A = \bigoplus_{i \geq 0} A_i$ is called \emph{AS Gorenstein} 
if it is connected ($A_0 = k$) and locally finite ($\dim_k A_i<\infty$ 
for all $i \geq 0$), $A$ has finite injective dimension $d$ as an 
$A$-module, and $\Ext^d_A(k, A) \cong k(\bfl)$ (as graded modules), while $\Ext^i_A(k, A) = 0$ if $i \neq d$.  
Here, $k = A/A_{\geq 1}$ 
is the trivial module and $k(\bfl)$ is its graded shift by 
$\bfl\in {\mathbb Z}$.  If 
$A$ has finite global dimension in addition, then $A$ is called \emph{AS regular}.   

Let $A$ be noetherian AS Gorenstein, and let $\D_{\epsilon}(A)$ be the 
subcategory of the bounded derived category of finitely generated 
$\mb{Z}$-graded $A$-modules consisting of perfect complexes with 
finite-dimensional cohomology.    Let $\Autz(A)$ be the group of all graded algebra automorphisms
of $A$. The algebra $A$ has an associated \emph{Nakayama automorphism} $\mu_A \in \Autz(A)$ 
(see Definition~\ref{xxdef3.1}).  The operation  $M \mapsto {}^\mu M$ which twists the action on a graded left module 
$M$ by this automorphism induces a functor $\Phi \colon \D_{\epsilon}(A) \to \D_{\epsilon}(A)$.  
An object $X \in \D_{\epsilon}(A)$ is called \emph{$\Phi$-plain} if $\phi(X) \cong X$, and we let 
$\D^{pl}_{\epsilon}(A)$ be the thick subcategory of $\D_{\epsilon}(A)$ generated by $\Phi$-plain 
complexes.   The complex shift functor $\Sigma: X \mapsto X[1]$ and the functor $T$ induced by 
the graded shift of modules $M \mapsto M(1)$ restrict to this category $\D^{pl}_{\epsilon}(A)$.
We prove the following. 

\begin{proposition}[Proposition~\ref{xxpro3.3} and Theorem \ref{xxthm3.5}]
\label{xxpro1.2}
Let $A$ be noetherian AS Gorenstein.  Then 
$\D^{pl}_{\epsilon}(A)$, with its functors $\Sigma$, $T$, and $\Phi$ as above, 
is a skew Calabi-Yau triangulated category.
\end{proposition}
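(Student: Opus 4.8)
The plan is to verify directly that $\D^{pl}_\epsilon(A)$ satisfies the axioms of a skew Calabi--Yau triangulated category from Section~\ref{xxsec2.5}, with duality data $(\Sigma, T, \Phi)$ and parameters $d$ and $\bfl$. I would first dispose of the formal points. The three functors are mutually commuting triangulated autoequivalences: $\Sigma = [1]$ and $T = (1)$ act on the homological and internal degrees respectively, while $\Phi(M) = {}^{\mu_A}M$ only alters the $A$-action, so the three operations are independent. Each preserves $\D_\epsilon(A)$: this is clear for $\Sigma$ and $T$, and for $\Phi$ it follows because twisting by a graded automorphism fixes the cohomology dimensions (so finite-dimensionality is preserved) and sends $A$ to ${}^{\mu_A}A \cong A$ (so perfectness is preserved). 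Hom-finiteness is then immediate: for $X, Y \in \D_\epsilon(A)$ one has $\Hom(X, \Sigma^i T^j Y) = H^i \RHom_A(X, Y(j))$, which vanishes for all but finitely many $(i,j)$ and is finite-dimensional in each degree because $X$ is perfect while $Y$ has finite-dimensional cohomology; summing shows $E(X, Y)$ is finite-dimensional.

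The core of the argument is the duality isomorphism (E1.0.2), which I would obtain by exhibiting $\Sigma^d T^{\bfl}\Phi$ as a Serre functor. For perfect $X$ the tensor--hom adjunction gives a natural isomorphism $\RHom_A(X, Y)^* \cong \RHom_A(Y, (\RHom_A(X, A))^*)$, so it suffices to identify $(\RHom_A(X, A))^* \cong \Sigma^d T^{\bfl}\Phi(X)$ for $X \in \D_\epsilon(A)$. Here the AS Gorenstein hypothesis enters through graded local duality: the algebra $A$ is noetherian and admits a balanced dualizing complex, and the definition of the Nakayama automorphism (Definition~\ref{xxdef3.1}) is precisely the statement that this dualizing complex has the form $R_A \cong {}^1 A^{\mu_A}(\bfl)[d]$ as a complex of bimodules. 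For $X$ with finite-dimensional (hence $\fm$-torsion) cohomology one has $\R\Gamma_{\fm}(X) \cong X$, so local duality yields $\RHom_A(X, R_A) \cong X^*$, and unwinding the shift, the twist, and the passage from $R_A$ back to $A$ converts this into the desired identification $(\RHom_A(X, A))^* \cong \Phi(X)(\bfl)[d]$. I stress that, unlike in the AS regular case, the trivial module $k$ is \emph{not} an object of $\D_\epsilon(A)$ when $A$ is not regular, so one cannot simply check the identification on $k$ and propagate by d\'evissage; the local duality theorem is what allows all perfect complexes with finite-dimensional cohomology to be treated uniformly.

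Having established the duality on $\D_\epsilon(A)$, I would descend it to $\D^{pl}_\epsilon(A)$. Since $\Phi$ is a triangulated autoequivalence sending $\Phi$-plain objects to $\Phi$-plain objects, it preserves the thick subcategory they generate, as do $\Sigma$ and $T$; hence the Serre functor $\Sigma^d T^{\bfl}\Phi$ restricts to an autoequivalence of $\D^{pl}_\epsilon(A)$, and the natural isomorphism (E1.0.2) restricts along with it. It then remains to confirm the remaining technical hypotheses from the definition in Section~\ref{xxsec2.5} --- for instance that the duality is compatible with the commuting functors and that the category is idempotent-complete, the latter being automatic since a thick subcategory is closed under direct summands.

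The main obstacle is the duality computation in the second paragraph: getting the Nakayama automorphism to appear on the correct side with the correct variance, and matching the homological shift $d$ and the grading shift $\bfl$ exactly, requires careful bookkeeping of the left/right module conventions as one passes through $(-)^*$, $\RHom_A(-, A)$, and the local duality isomorphism, together with a precise form of the balanced dualizing complex of a noetherian AS Gorenstein algebra. Everything else is formal once this identification is in place.
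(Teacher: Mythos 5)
Your construction of the Serre functor follows essentially the same route as the paper's Lemma~\ref{xxlem3.2} (local duality via the balanced/rigid dualizing complex, tensor--hom adjunction for perfect complexes, $\R\Gamma_{\fm}(X)\cong X$ for finite-dimensional cohomology), and your descent to $\D^{pl}_{\epsilon}(A)$ matches Lemma~\ref{xxlem2.10}. But there is a genuine gap at the step you dismiss as ``confirm the remaining technical hypotheses.'' Being skew Calabi--Yau in the sense of Definition~\ref{xxdef2.9} is \emph{not} the assertion that some Serre functor has the shape $\Sigma^d T^{\bfl}\Phi$; it is condition \eqref{E2.9.1}, a statement about the natural transformations $\eta_{-1}(\Sigma)\colon F\Sigma\to\Sigma F$ and $\eta_1(T)\colon FT\to TF$ that are \emph{induced by the chosen Serre duality isomorphisms} $\alpha_{X,Y}$ via the construction of Section~\ref{xxsec2.3}. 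One must show that, for the specific $\alpha_{X,Y}$ coming out of the local-duality argument, these induced transformations equal $(-1)^d\1_{F\Sigma}$ and $\1_{FT}$ respectively (equivalently, $s=(-1)^d$ and $t=1$ in Hypothesis~\ref{xxhyp2.5}(5)). Your proposal never engages with this: having the duality isomorphism \eqref{E1.0.2} at the level of Hom-spaces does not determine how it interacts with $\Sigma$ and $T$, and as Remark~\ref{xxrem3.4} stresses, the identity $\eta_{-1}(\Sigma)=(-1)^d\1_{F\Sigma}$ ``is not obviously forced'' --- exactness of $F$ alone does not pin down the commutation, and the answer may in principle depend on the choice of the $\alpha_{X,Y}$ (which, per Section~\ref{xxsec2.3}, can be altered by any unit of the center of the category).

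This verification is in fact the bulk of the paper's proof of Proposition~\ref{xxpro3.3}: one writes out the $\alpha_{X,Y}$ as an explicit composite (the maps $\alpha_1,\dots,\alpha_5$ through $D(X)\otimes_A G(Y)$ and its dual), fixes Koszul-sign conventions \eqref{E3.3.1}--\eqref{E3.3.6} for commuting $\Sigma$ past $\Hom$ and $\otimes$, and checks that square I anticommutes while squares II--V commute, the anticommutativity of square I$_3$ being precisely where the sign $(-1)^d$ is extracted. Without this computation you cannot conclude that $\D_{\epsilon}(A)$ is $\Phi$-skew Calabi--Yau, and in particular you lose access to Theorem~\ref{xxthm2.7} with the correct values of $s$ and $t$, which is the whole point of the definition (the Nakayama automorphism formula depends on $s$ and $t$). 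Note also that your final paragraph identifies the ``main obstacle'' as the bookkeeping needed to produce $F\cong\Sigma^d T^{\bfl}\Phi$ --- but that is the comparatively easy Lemma~\ref{xxlem3.2}; the hard part of the paper's argument is the one your proposal omits. (Two smaller points: with the paper's conventions the Serre functor is $\Sigma^d\circ T^{-\bfl}\circ\Phi$, with a sign on the grading shift; and idempotent-completeness is not among the hypotheses of Definition~\ref{xxdef2.9}, so that check is not needed.)
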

\noindent  In fact, we prove the proposition for a more general class of 
$\mb{N}$-graded algebras called \emph{generalized AS Gorenstein} 
(Definition~\ref{xxdef3.1}) which are locally finite but not necessarily connected.

If $A$ is noetherian (connected) AS Gorenstein, 
then each graded algebra automorphism of $A$ has a corresponding \emph{homological determinant}, 
which is a nonzero scalar, and where $\hdet: \Autz(A) \to k$ is multiplicative.   
The homological determinant is fundamental to the study of group 
actions on noncommutative graded algebras (see Section~\ref{xxsec4} for more details).   One of 
our motivating goals is to prove that the homological identity $\hdet(\mu_A) = 1$ holds in wide generality.
This identity has several significant applications; see \cite[Corollaries 0.5, 0.6, 0.7]{RRZ}.

In this paper, as an application of Theorem~\ref{xxthm1.1}, we prove 
\begin{theorem}[Theorem~\ref{xxthm5.3}]
\label{xxthm1.3}
If $A$ is noetherian connected AS Gorenstein and $\D_{\epsilon}(A) \neq 0$, 
then $\hdet(\mu_A) = 1$.
\end{theorem}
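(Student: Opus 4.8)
The plan is to apply Theorem~\ref{xxthm1.1} to a well-chosen object of the skew Calabi--Yau category $\D^{pl}_{\epsilon}(A)$ furnished by Proposition~\ref{xxpro1.2}, and then to read off $\hdet(\mu_A)$ from the Nakayama automorphism of the resulting Frobenius Ext-algebra. The guiding principle is an elementary and completely general fact: if $(E,\epsilon)$ is a Frobenius algebra with Frobenius functional $\epsilon$ and Nakayama automorphism $\mu_E$, then $\mu_E$ preserves $\epsilon$, that is $\epsilon\circ\mu_E=\epsilon$. Indeed, from the defining identity $\epsilon(ab)=\epsilon(b\,\mu_E(a))$ with $a=x$, $b=1$ one reads off $\epsilon(x)=\epsilon(\mu_E(x))$. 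Thus the entire argument reduces to producing a Frobenius Ext-algebra $E(X)$ for which a second, independent computation shows that $\mu_E$ rescales $\epsilon$ by the scalar $\hdet(\mu_A)$, i.e. $\epsilon\circ\mu_E=\hdet(\mu_A)\,\epsilon$. Comparing the two computations forces $\hdet(\mu_A)=1$.

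First I would produce a nonzero $\Phi$-plain object $X$ to which Theorem~\ref{xxthm1.1} applies. When $A$ is AS regular the trivial module $X=k$ is perfect, lies in $\D_{\epsilon}(A)$, and is $\Phi$-plain since ${}^{\mu_A}k\cong k$; here $E(k)=\bigoplus_{i,j}\Ext^{i}_A(k,k)_j$ is the finite-dimensional bigraded Yoneda algebra. In the general AS Gorenstein case $k$ need not be perfect, so instead I would begin from an arbitrary nonzero $Y\in\D_{\epsilon}(A)$, guaranteed to exist by the hypothesis $\D_{\epsilon}(A)\neq 0$, and manufacture from it a nonzero object $X$ of $\D^{pl}_{\epsilon}(A)$ equipped with an isomorphism $\phi\colon X\to\Phi(X)$. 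This is the first technical hurdle: because the Nakayama automorphism $\mu_A$ may have infinite order, one cannot simply average $Y$ over the $\Phi$-orbit, and some care is needed (combining $Y$ with its $\Phi$-twists and exploiting the finite-dimensionality of the cohomology and of the relevant $\Hom$-spaces) to secure both plainness of $X$ and the fact that $X$ still detects the homological determinant.

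With such an $X$ in hand, Theorem~\ref{xxthm1.1} endows $E(X)=\bigoplus_{i,j}\Hom(X,\Sigma^{i}T^{j}X)$ with the structure of a bigraded Frobenius algebra and supplies an explicit formula for its Nakayama automorphism $\mu_E$ in terms of the chosen $\phi\colon X\to\Phi(X)$. The skew Calabi--Yau duality \eqref{E1.0.2} identifies the top bigraded component $\Hom(X,\Sigma^{d}T^{\bfl}X)$ with the $k$-linear dual of $\Hom(X,X)$, and the Frobenius functional $\epsilon$ corresponds to the functional dual to the identity class $\id_X$. The core computation is then to show, using the explicit formula, that $\mu_E$ rescales $\epsilon$ by the action of the Nakayama functor $\Phi={}^{\mu_A}(-)$ on this top group, and that this action is precisely $\hdet(\mu_A)$. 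This last identification --- matching the intrinsic effect of $\mu_E$ on $E(X)$ with the homological determinant as defined through the local cohomology $H^{d}_{\fm}(A)$, equivalently through $\Ext^{d}_A(k,A)\cong k(\bfl)$ --- is the main obstacle, since it requires translating between the abstract definition of $\hdet$ recalled in Section~\ref{xxsec4} and the concrete effect of $\mu_A$-twisting on the duality pairing. Once $\epsilon\circ\mu_E=\hdet(\mu_A)\,\epsilon$ is established, the general Frobenius identity $\epsilon\circ\mu_E=\epsilon$ forces $\hdet(\mu_A)=1$.
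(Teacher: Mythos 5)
Your guiding principle---that the Nakayama automorphism of a Frobenius algebra fixes the Frobenius functional, so any scalar by which it rescales that functional must equal $1$---is exactly the mechanism the paper uses (in the proof of Theorem~\ref{xxthm4.1}(2), the identity $\mu_E(\mathfrak{e})=\mathfrak{e}$ forces $c^{\bfl}=1$). But the two steps you flag as ``technical hurdles'' are genuine gaps, not details, and neither can be carried out as you describe for a general noetherian connected AS Gorenstein $A$. First, you have no way to produce a nonzero $\Phi$-plain object in $\D_{\epsilon}(A)$: combining $Y$ with its $\Phi$-twists would require a $\Phi$-stable \emph{finite} direct sum $Y\oplus\Phi(Y)\oplus\cdots$, which exists only when the $\Phi$-orbit of $Y$ is essentially finite, and this fails for typical $\mu_A$ of infinite order. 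The paper itself never proves $\D^{pl}_{\epsilon}(A)\neq 0$ in this generality---the proof of Theorem~\ref{xxthm3.5} explicitly assumes nontriviality to sidestep exactly this issue. Second, your ``core computation,'' that $\mu_E$ rescales $\epsilon$ by precisely $\hdet(\mu_A)$ for an arbitrary plain $X$, is asserted rather than proved; no such general identification between the abstract formula of Theorem~\ref{xxthm2.7} and the local-cohomology definition of $\hdet$ is established anywhere, and obtaining it for an arbitrary perfect complex $X$ (rather than for $k$ over a regular algebra) would be substantial new work.

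The paper closes both gaps simultaneously with a reduction your outline is missing: graded twisting. Following the proof of \cite[Lemma 6.2]{RRZ}, one replaces $A$ (after a harmless polynomial extension to avoid AS index $-1$, then passing to $A[t]$ and regrading by total degree) by a graded twist $B$ whose Nakayama automorphism is of scalar type, $\mu_B=\xi_c$, with $\hdet\mu_B=\hdet\mu_A$; Lemma~\ref{xxlem5.2} shows the hypothesis $\D_{\epsilon}\neq 0$ survives each of these operations. For such $B$ the maps $\rho_M\colon m\mapsto c^{|m|}m$ assemble into a natural isomorphism $\1\to\Phi$, so \emph{every} nonzero object of $\D_{\epsilon}(B)$ is plain (your first gap vanishes), and Theorem~\ref{xxthm4.1}(1) computes $\mu_E=\xi_{(-1)^{d+1},\,c^{-1}}$ explicitly; the Frobenius functional $\mathfrak{e}$ has bidegree $(-d,\bfl)$, so its invariance gives $c^{\bfl}=1$. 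Your second gap is then closed not by a new identification but by the known formula $\hdet(\xi_c)=c^{\bfl}$ of \cite[Lemma 5.3]{RRZ}. Without this twisting reduction, your plan cannot be completed as stated.
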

The homological identity $\hdet(\mu_A) = 1$ was proved in \cite[Theorem 6.3]{RRZ} in the special 
case that $A$ is noetherian Koszul AS regular, and was conjectured to hold for all noetherian AS Gorenstein algebras 
in \cite[Conjecture 6.4]{RRZ}.
The hypothesis $D_{\epsilon}(A) \neq 0$ 
of Theorem~\ref{xxthm1.3} seems very weak and we  conjecture that it holds 
automatically.  In any case, in Section~\ref{xxsec5} below we show that 
$D_{\epsilon}(A) \neq 0$ holds under very general conditions which cover most known AS 
Gorenstein algebras.   In particular, $D_{\epsilon}(A) \neq 0$ holds when 
$A$ is noetherian AS regular, and so the conjecture is proved for all such regular algebras
(Corollary~\ref{xxcor5.4}).  

As a second application of Theorem~\ref{xxthm1.1}, we recover and 
generalize a result of Berger and Marconnet \cite{BM}, as follows.  We recall that connected graded noetherian AS regular algebras $A$ are examples of \emph{skew Calabi-Yau algebras}
(see \cite[Definition 0.1]{RRZ}, \cite[Lemma 1.2]{RRZ}) and are \emph{Calabi-Yau algebras} when $\mu_A = 1$, 
a special case of particular interest.  

\begin{theorem}[Theorem~\ref{xxthm4.2}]
\label{xxthm1.4}
Let $A$ be connected graded noetherian AS regular of dimension $d$ with Nakayama 
automorphism $\mu_A$, where $A$ is generated in degree $1$, and let 
$E:=\bigoplus_i \Ext^i_A(k, k)$ be the Ext-algebra of $A$.  
Let $\mu_E$ denote the Nakayama automorphism of the Frobenius algebra $E$. 
Then identifying $(A_1)^*$ with $E^1 = \Ext^1_A(k, k)$, we have 
$\mu_E \vert_{E^1} = (-1)^{d+1} (\mu_A \vert_{A_1})^*$.
In particular, $\mu_A = 1$  (that is, $A$ is a Calabi-Yau algebra) if and only if $E$ is a graded-symmetric 
Frobenius algebra.
\end{theorem}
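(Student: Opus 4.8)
The plan is to apply Theorem~\ref{xxthm1.1} to the single object $X = k \in \D^{pl}_\epsilon(A)$, where $k = A/A_{\geq 1}$ is the trivial module. First I would check that $k$ genuinely lies in the category and is $\Phi$-plain. Since $A$ is connected graded noetherian AS regular, it has finite global dimension, so $k$ has a finite graded free resolution and hence defines a perfect complex with finite-dimensional cohomology, i.e. $k \in \D_\epsilon(A)$. Because $\mu_A$ is a graded algebra automorphism it fixes $A_0 = k$, so the twist ${}^{\mu_A}k$ is canonically isomorphic to $k$; this gives a distinguished isomorphism $\phi \colon k \to \Phi(k)$ and shows $k \in \D^{pl}_\epsilon(A)$. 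By Proposition~\ref{xxpro1.2} the category $\D^{pl}_\epsilon(A)$ is skew Calabi-Yau, so Theorem~\ref{xxthm1.1} applies and endows $E(k) = \bigoplus_{i,j}\Hom(k, \Sigma^i T^j k) = \bigoplus_i \Ext^i_A(k,k) = E$ with the structure of a bigraded Frobenius algebra together with an explicit formula for $\mu_E$ in terms of $\phi$.

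The core computation is to evaluate this formula on $E^1 = \Ext^1_A(k,k)$. The explicit formula expresses $\mu_E$ on a class of cohomological degree $i$ as the Koszul sign $(-1)^{i(d-i)}$ times the automorphism of $E$ induced by conjugating the $\Phi$-action by $\phi$; here only the $\Sigma$-grading contributes a sign, since the $T$-grading comes from shifts of the module grading. On $E^1$ the sign is $(-1)^{1\cdot(d-1)} = (-1)^{d+1}$. It remains to identify the induced $\Phi$-action on $E^1$. Since $A$ is generated in degree $1$, a minimal graded free resolution of $k$ begins $A(-1)^{\oplus n} \to A \to k \to 0$ with $n = \dim_k A_1$, which yields the identification $E^1 = \Ext^1_A(k,k) \cong (A_1)^*$. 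Tracing the functor $\Phi = {}^{\mu_A}(-)$ through this identification shows that $\Phi$ acts on $(A_1)^*$ as the dual $(\mu_A\vert_{A_1})^*$. Combining the sign and the $\Phi$-action gives $\mu_E\vert_{E^1} = (-1)^{d+1}(\mu_A\vert_{A_1})^*$.

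For the final equivalence I would argue both directions from this computation. If $E$ is graded-symmetric then its Nakayama automorphism is the sign automorphism $a \mapsto (-1)^{i(d-i)}a$ on $E^i$, so $\mu_E\vert_{E^1} = (-1)^{d+1}\id$; the displayed formula then forces $(\mu_A\vert_{A_1})^* = \id$, hence $\mu_A\vert_{A_1} = \id$, and since $A$ is generated in degree $1$ this gives $\mu_A = 1$. Conversely, if $\mu_A = 1$ then $\Phi = {}^{\mu_A}(-)$ is the identity functor and $\phi$ may be taken to be $\id_k$, so the explicit formula of Theorem~\ref{xxthm1.1} collapses to $\mu_E(a) = (-1)^{i(d-i)}a$ for every $a \in E^i$; this is precisely the statement that the Frobenius form on $E$ is graded-symmetric.

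The main obstacle I anticipate is the second paragraph: correctly bookkeeping the Koszul signs produced by the shift functor $\Sigma^d$ and the bigrading in the explicit Nakayama formula, and verifying that the natural identification $\Ext^1_A(k,k) \cong (A_1)^*$ is compatible with the $\Phi$-action in the way that produces exactly $(\mu_A\vert_{A_1})^*$ (rather than, say, its inverse transpose). Getting the sign and the direction of the dual right is what pins down the constant $(-1)^{d+1}$ and the precise form of the automorphism.
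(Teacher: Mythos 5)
Your proposal is correct and follows essentially the same route as the paper's proof of Theorem~\ref{xxthm4.2}: apply Theorem~\ref{xxthm2.7} to $X = k$ with the isomorphism $\phi \colon k \to {}^{\mu_A}k$, identify $E^1 \cong (A_1)^*$ via the start of the minimal free resolution to see that conjugation by the $\mu_A$-linear lift of $\phi$ acts as $(\mu_A\vert_{A_1})^*$ with sign $(-1)^{d+1}$ (noting $(-1)^{i(d-i)} = (-1)^{i(d+1)}$), and deduce the Calabi--Yau equivalence exactly as the paper does — using the all-degrees formula (trivially, with $\Phi = \id$, $\phi = \id$) for the direction $\mu_A = 1 \Rightarrow E$ graded-symmetric, and generation in degree $1$ for the converse. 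The chain-level verification you flag as the main obstacle is precisely what the paper's proof of Theorem~\ref{xxthm4.2}(1)--(2) carries out, and it confirms your expected answer.
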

\noindent 
 
Portions of this theorem have been proved in the case of (not necessarily noetherian) $N$-Koszul 
algebras; see~ \cite[Theorem~6.3]{BM} for the description of $\mu_E|_{E_1}$ 
and~\cite[Proposition~3.3]{HVOZ} for the characterization of when $A$ is Calabi-Yau.
But many regular algebras of dimension 4 and higher are not 
$N$-Koszul.  In fact, Theorem~\ref{xxthm4.2} describes $\mu_E$ entirely, 
not just in degree $1$, provided one can calculate an explicit isomorphism of 
complexes between the minimal free resolution $X$ of $k$ and its twist  $\Phi(X)$ by 
$\mu_A$.   In addition, in Theorem~\ref{xxthm4.2} we give a more general result
which applies to certain not necessarily connected algebras.
The theorem also has further applications in noncommutative 
invariant theory; see Section~\ref{xxsec4.2}.

\subsection*{Acknowledgments}
This material is based upon work supported by the National Science
Foundation under Grant No.\ 0932078 000, while the authors were in
residence at the Mathematical Science Research Institute (MSRI) in
Berkeley, California, for the workshop titled ``Noncommutative Algebraic
Geometry and Representation Theory'' during the year of 2013.
Reyes, Rogalski, and Zhang were also supported by the respective National Science
Foundation grants DMS-1407152, DMS-1201572, and DMS-0855743 \& DMS-1402863.
Reyes was supported by an AMS-Simons Travel Grant.

\section{Calabi-Yau categories and Serre functors}
\label{xxsec2}
\subsection{(Bi)-graded categories}
\label{xxsec2.1}
This section follows Van den Bergh's treatment of Serre duality and 
Calabi-Yau triangulated categories given in~\cite[Appendix~A]{Bo}.  
Another standard reference on the subject is~\cite{Keller}.

Throughout the paper we work over an algebraically closed field $k$. 
Recall that a category $\C$ is \emph{$k$-linear} if each $\Hom_{\C}(X, Y)$ is a $k$-vector space, 
composition of morphisms is $k$-bilinear, and $\C$ has finite biproducts.  
For convenience, we assume that all categories in this paper are $k$-linear, although some definitions make sense 
in greater generality.   

A \emph{graded category} is a pair $(\C, \Sigma)$, where $\C$ is a $k$-linear category, and $\Sigma$ 
is a $k$-linear automorphism of $\C$. 
Given such a category with objects $X,Y \in \C$, we define 
\[
\Hom^i(X,Y) = \Hom(X,\Sigma^i Y),
\]
and then define the graded Hom-set
\[
\Hom\gr(X,Y) = \bigoplus_{i \in \mb{Z}} \Hom^i(X,Y).
\]
Graded composition is given, for $g \in \Hom^j(Y,Z)$ and $f \in \Hom^i(X,Y)$,
by $g * f = \Sigma^i(g) \circ f$.  It is easy to check that graded composition is associative.   
This allows one to define the category $\C\gr$ with the same objects as $\C$, but using graded Hom-sets and 
composition as above. 
In particular, for any object $X \in \C$, this 
endows $E(X) :=\Hom\gr(X,X) = \bigoplus_{i \in \mb{Z}} \Hom(X, \Sigma^i X)$ 
with the structure of a $\mb{Z}$-graded $k$-algebra.

A \emph{graded functor} between graded categories  is a pair
$(U, \eta^U) \colon (\C, \Sigma_\C) \to (\D, \Sigma_\D)$, 
where $U \colon \C \to \D$ is a $k$-linear functor and 
$\eta^U \colon U \circ \Sigma_\C \to \Sigma_\D \circ U$ is a natural 
isomorphism.
To simplify notation, we write $\eta^U = \eta$ and write 
both $\Sigma_{\C}$ and $\Sigma_{\D}$ as $\Sigma$, when there is no chance 
of confusion.  Given a graded functor 
$(U, \eta)$, for every integer $i \geq 1$ there is an associated natural 
isomorphism $\eta^i: U \circ \Sigma^i \to \Sigma^i \circ U$,
defined on an object $X$ to be the composite
\[
\eta^i_X \colon U \circ \Sigma^i(X) \overset{\eta_{\Sigma^{i-1}(X)}}{\to} 
\Sigma \circ U \circ \Sigma^{i-1}(X) \to \cdots
\to \Sigma^{i-1} \circ U \circ \Sigma(X) \overset{\Sigma^{i-1}(\eta_X)}{\to} 
\Sigma^i \circ U(X).
\]
Of course, the use of the symbol $\eta^i$ is a (harmless) abuse of 
notation.  We also get an induced natural isomorphism 
$\eta^{-1}: U \circ \Sigma^{-1} \to \Sigma^{-1} \circ U$ defined on an 
object $X$ by $(\eta^{-1})_X = \Sigma^{-1}(\eta_{\Sigma^{-1}(X)}^{-1})$.  
Then the powers $\eta^{-i} = (\eta^{-1})^i$ are determined as above for $i \geq 1$.  
Finally, let $\eta^0: U \to U$ be the identity natural isomorphism by 
convention.
It is routine to see that with these definitions, the following formula holds 
for all $i, j \in \mb{Z}$:
\begin{equation}
\label{E2.0.1}\tag{E2.0.1}
\eta^{i+j}_X = \Sigma^i(\eta^j_X) \circ \eta^i_{\Sigma^j(X)}.
\end{equation}
Now using \eqref{E2.0.1}, 
one may easily check that the graded functor $U$ induces a functor 
$U\gr \colon \C\gr \to \D\gr$, which agrees with $U$ on objects and is 
defined on a homogeneous element
$f \in \Hom_\C^i(X,Y) = \Hom_\C(X,\Sigma^i Y)$ by
\[
U\gr(f) = \eta^i_Y \circ U(f).
\]

Suppose that $(U, \eta^U), (V, \eta^V) \colon (\C,\Sigma_\C) \to 
(\D, \Sigma_D)$ are graded functors. A \emph{morphism of graded 
functors} is a natural transformation $h \colon U \to V$ such that, 
for every object $X$ of $\C$, we have  $\Sigma_\D(h_X) \circ \eta^U_X = 
\eta^V_X \circ h_{\Sigma_\C X}$. Of course, $h$ is called an 
\emph{isomorphism of graded functors} if it is a natural isomorphism
(whose inverse will necessarily be graded), 
and in this case we write $(U, \eta^U) \cong (V, \eta^V)$.

One can also compose graded functors in the obvious way.  If 
$(U, \eta^U) \colon (\C_1, \Sigma_1) \to (\C_2, \Sigma_2)$ and 
$(V, \eta^V) \colon (\C_2, \Sigma_2) \to (\C_3, \Sigma_3)$ are 
graded functors, then the composite $(V, \eta^V) \circ (U, \eta^U)$ 
is defined to be $(V \circ U, \eta^{V \circ U}) \colon (C_1, \Sigma_1) 
\to (\C_3, \Sigma_3)$, where $\eta^{V \circ U}$ is defined for an object 
$X \in \C_1$ by $\eta^{V \circ U}_X = \eta^V_{U(X)} \circ V(\eta^U_X)$.

In this paper, we also need a bigraded (or multi-graded) version of 
the above constructions, which is a routine generalization. We only 
define the bigraded case. The extension to the case of more than two 
automorphisms to get multi-graded categories is similar, and is left 
to the reader. Suppose that the $k$-linear category $\C$ has two 
automorphisms $\Sigma$ and $T$, which commute in the sense that 
$\Sigma T = T \Sigma$ as functors.  We say that $(\C, \Sigma, T)$ is 
a \emph{bigraded} category. We can define bigraded Hom sets 
$\Hom\gr(X, Y) = \bigoplus_{i,j} \Hom^{i,j}(X, Y)$, where 
$\Hom^{i,j}(X,Y) = \Hom(X, \Sigma^i T^j Y)$.  Composition is done in 
the obvious way:  if $g \in \Hom^{k,l}(Y,Z)$ and $f \in \Hom^{i,j}(X, Y)$, 
then 
$$g * f = \Sigma^iT^j(g) \circ f,$$
which is in $\Hom(X, \Sigma^i T^j \Sigma^k T^l Z) 
= \Hom(X, \Sigma^{i+k}T^{j + l} Z) = \Hom^{i+k, j + l}(X, Z).$
Note that since we assume that $\Sigma$ and $T$ strictly commute, rather 
than $\Sigma T$ and $T \Sigma$ only being naturally isomorphic, we 
silently commute $\Sigma$ and $T$ in formulas like this.   Similarly as 
in the singly graded case, it is routine to check that graded composition is associative.
Thus we may define a category $\C\gr$ with the same objects 
as $\C$, but with morphisms given by  bigraded Hom-sets and composition 
as above.  
In particular, the endomorphism ring $\Hom\gr(X, X)$ is a 
bigraded $k$-algebra, for any object $X$.

A \emph{bigraded functor} between graded categories 
$$(U, \eta^U, \theta^U) \colon (\C, \Sigma_\C, T_{\C}) \to 
(\D, \Sigma_\D, T_{\D})$$ 
is a $k$-linear functor
$U \colon \C \to \D$ together with natural isomorphisms 
\[
\eta^U \colon U \circ \Sigma_\C
\to \Sigma_\D \circ U\ \text{and}\ \theta^U:  
U \circ T_{\C} \to T_{\D} \circ U,
\]
satisfying the following compatibility condition:
for every object $X \in \C$, there is a commutative diagram
\begin{equation}
\label{E2.0.2}\tag{E2.0.2}
\xymatrix{
U \Sigma_{\C} T_{\C} X \ar[r] \ar[d]^{\Sigma_{\D}(\theta_X) 
\circ \eta_{T_{\C}(X)}} 
& U T_{\C} \Sigma_{\C} X \ar[d]^{T_{\D}(\eta_X) \circ 
\theta_{\Sigma_{\C}(X)}} \\
\Sigma_{\D} T_{\D} U X  \ar[r] & T_{\D} \Sigma_{\D} U X}
\end{equation}
where the horizontal arrows are induced from the respective equalities 
$\Sigma_{\C} T_{\C} = T_{\C} \Sigma_{\C}$ and $\Sigma_{\D} T_{\D} = 
T_{\D} \Sigma_{\D}$. Similarly as above, for simplicity from now on we 
will omit the superscripts on $\eta$ and $\theta$ and the subscripts 
on $\Sigma$ and $T$.

Given a bigraded functor $(U, \eta, \theta)$, for every pair of integers 
$i,j \in \mb{Z}$ there is an associated natural transformation 
$\eta^i \theta^j : U \Sigma^i T^j  \to 
\Sigma^i T^j U$, defined on an object $X$ as 
$\Sigma^i(\theta^j_X) \circ \eta^i_{T^j(X)}$, 
where the powers of $\eta$ and $\theta$ are defined using the 
conventions introduced earlier.
Now a similarly routine proof as in the singly graded case (but using also the 
compatibility condition \eqref{E2.0.2}) shows that any bigraded functor $(U, \eta, \theta)$ induces a functor $U\gr \colon 
\C\gr \to \D\gr$, which agrees with $U$ on objects and is 
defined on a homogeneous element
$f \in \Hom_\C^{i,j}(X,Y) = \Hom_\C(X,\Sigma^i T^j Y)$ by
\[
U\gr(f) = (\eta^i \theta^j)_Y \circ U(f).
\]

\subsection{Triangulated categories and exact functors}
\label{xxsec2.2}
The graded categories $(\mc{C}, \Sigma)$ we are interested in will 
typically be triangulated, and when this is the case $\Sigma$ will 
stand for the translation functor $X \mapsto X[1]$ unless otherwise indicated.
Let $\C$ and $\D$ be triangulated categories with translation 
functors $\Sigma_{\C}$ and $\Sigma_{\D}$ respectively. 
A graded functor $(U, \eta):\C\to \D$ is called \emph{exact} if $U$ maps
exact triangles in $\C$ to exact triangles in $\D$, that is, for
any exact triangle
$$X\xrightarrow{\quad f\quad } Y\xrightarrow{\quad g\quad } 
Z\xrightarrow{\quad h\quad } \Sigma_{\C} X,$$
the triangle in $\D$
$$U(X)\xrightarrow{U(f)} U(Y)\xrightarrow{U(g)} U(Z)
\xrightarrow{\eta_X\circ U(h)} \Sigma_{\D} U(X)$$
is exact.

\begin{remark} 
\label{xxrem2.1}
Let $\C$ be a triangulated category, considered as a graded category 
$(\C, \Sigma)$ where $\Sigma$ is the translation functor.
Then 
$$\Sigma^{+}:=(\Sigma, \eta_X=\operatorname{id}_{\Sigma^2(X)})$$
is a graded endofunctor of $(\C, \Sigma)$; however,  $\Sigma^{+}$ need not be exact.
On the other hand, 
$$\Sigma^{-}:=(\Sigma, \eta_X=-\operatorname{id}_{\Sigma^2(X)})$$ 
is a graded and exact functor of 
the triangulated category $(\C, \Sigma)$ \cite[Example A.3.2]{Bo}.
Thus $\Sigma^{-}$ is the natural way to make $\Sigma$ into a graded 
functor in the triangulated setting.
\end{remark}

\subsection{Categories with Serre functors}
\label{xxsec2.3}
Serre functors were introduced by Bondal and Kapranov 
in~\cite{BondalKapranov}. A thorough treatment of Serre functors is 
given by Reiten and Van den Bergh in~\cite[\S I.1]{ReitenVanDenBergh}, 
and from the perspective of graded categories in Van den Bergh's 
appendix~\cite[Appendix~A]{Bo}. Let $\Hom$ denote
$\Hom_{\C}$ for an abstract category $\C$.  The $k$-linear category $\C$ is 
called \emph{Hom-finite} if $\Hom(X, Y)$ is a finite-dimensional $k$-space 
for all $X, Y \in \C$.

\begin{definition}
\label{xxdef2.2}
Given a $k$-linear Hom-finite category $\C$, an autoequivalence 
$$F \colon \C \to \C$$
is called a \emph{{\rm{(}}right{\rm{)}} Serre functor} if for all 
$X,Y \in \C$ there are isomorphisms
\[
\alpha_{X,Y} \colon \Hom(X,Y) \to \Hom(Y, FX)^*
\]
that are natural in both $X$ and $Y$. 
\end{definition}

For a given object $X \in \C$, setting $Y = X$ in the natural 
isomorphism above, there is a distinguished element 
$\Tr_X = \alpha_{X,X}(\id_X) \in \Hom(X,FX)^*$.
This is the \emph{trace} map
\[
\Tr_X \colon \Hom(X, FX) \to k.
\]
The trace map provides a nondegenerate bilinear pairing
\[
\Tr_X(- \circ -) \colon \Hom(Y, FX) \times \Hom(X, Y) \to k,
\]
which satisfies the following identity for $(g,f) \in \Hom(Y, FX) 
\times \Hom(X, Y)$:
\begin{equation}
\label{E2.2.1}\tag{E2.2.1}
\Tr_X(g \circ f) = \Tr_Y(F(f) \circ g),
\end{equation}
see \cite[p. 29 (3)]{Bo}.
Conversely, the action of the functor $F$ on objects along with the 
trace maps $\Tr_X$ are enough to determine the functor $F$ and the 
isomorphisms $\alpha_{X,Y}$  
\cite[Prop.~I.1.4]{ReitenVanDenBergh}.

Suppose that $F$ is a Serre functor on a Hom-finite, $k$-linear 
graded category $(\C, S)$.  
We claim that there is an induced commutation rule for the 
functors $F$ and $S$.   Fix the isomorphisms $\alpha_{X, Y}$ which give 
the Serre duality.  Also, fix a nonzero scalar $\epsilon \in k^{\times} := k \setminus \{ 0 \}$.
Consider the diagram
\[  \xymatrix@C=30pt{
\Hom(X, Y) \ar[d]^{S} \ar[r]^{\alpha_{X,Y}} 
& \Hom(Y, FX)^* \ar[r]^{(S^*)^{-1}} 
& \Hom(S Y, S F X)^* \ar@{-->}[ld]^{\beta_{X,Y}} \\
\Hom(S X, SY) \ar[r]^{\alpha_{S X, S Y}} 
& \Hom(S Y, F S X)^* & }
\]
in which all solid arrows are isomorphisms.  Thus for every pair of 
objects there is an induced isomorphism $\beta_{X,Y}: 
\Hom(S Y, S F X)^* \to \Hom(S Y, F S X)^*$ such that the diagram 
commutes up to the scalar $\epsilon$, in other words such that 
\[
\beta_{X, Y} \circ (S^*)^{-1} \circ \alpha_{X, Y} = 
\epsilon (\alpha_{SX, SY} \circ S).
\]
The isomorphisms $\beta_{X, Y}$ are clearly also natural in $X$ and $Y$.
Fixing $X$ and varying $Y$, we get an isomorphism of functors 
$\Hom(-,  SF X)^* \to \Hom(-, F S X)^*$, and thus by the 
Yoneda embedding, an isomorphism of objects 
$\eta_X: F S X \to S F X$.  
These $\eta_X$ are natural in $X$ 
and so define a natural isomorphism 
$\eta = \eta_{\epsilon}(S): F S \to S F$.  
Note that $\beta_{X, Y}$ is then given by the $k$-linear dual 
of the map $f \mapsto \eta_X \circ f$ for $f \in \Hom( SY, F SX)$. One 
can also give $\eta$ in terms of the trace functions, as is done in 
Van den Bergh's appendix to \cite{Bo}. Namely, specializing the diagram 
above to the case $X = Y$, we see that $\eta = \eta_{\epsilon}(S)$ 
satisfies the formula \cite[p. 30 (4)]{Bo}
\begin{equation}
\label{E2.2.2}\tag{E2.2.2}
\Tr_X(S^{-1}(\eta_X \circ f)) =  \epsilon \Tr_{S X}(f),
\end{equation}
for all objects $X$ and
morphisms $f \colon S X \to F S X$ of $\C$.  One can check that the 
collection of trace identities in \eqref{E2.2.2} for all objects 
$X$ also uniquely determines the natural isomorphism $\eta$.

\begin{remark}
\label{xxrem2.3}
Suppose that $\C$ is a Hom-finite $k$-linear triangulated category 
with translation functor $\Sigma$, such that $\C$ has a Serre functor 
$F$.
\begin{enumerate}
\item We would like to also require that the natural transformation 
$\eta_{\epsilon}(\Sigma): F \Sigma \to \Sigma F$ makes $F$ into an 
exact functor of $(\C, \Sigma)$. This is always the case when one 
chooses $\epsilon = -1$ \cite[Theorem A.4.4]{Bo}. Thus $\eta_{-1}(\Sigma)$ 
is the natural way of commuting $F$ and $\Sigma$ in a triangulated category.
\item 
On the other hand, for many other isomorphisms $T$ of $\C$, it is most 
natural to take $\epsilon = 1$ and consider 
$\eta_1(T): FT \to TF$.  This will be the case, for example, when we consider 
a subcategory $\C$ of the bounded derived category of 
complexes of graded modules over an algebra $A$ with a $\mb{Z}$-grading, 
and $T$ is induced by the shift of grading on a module.  
\end{enumerate}
\end{remark}

We would also like to consider the bigraded case, for which the following 
lemma will be useful.

\begin{lemma}
\label{xxlem2.4}
Let $\C$ be a Hom-finite $k$-linear category with Serre functor $F$ and fixed 
isomorphisms $\alpha_{X, Y}$ implementing the Serre duality.  Let 
$S_1, S_2$ be isomorphisms of $\C$ which commute, and consider the 
bigraded category $(\C, S_1, S_2)$.  Let $\epsilon_1, \epsilon_2 
\in k^\times$ and define $\eta=\eta_{\epsilon_1}(S_1)$, 
$\theta = \eta_{\epsilon_2}(S_2)$.  
\begin{enumerate}
\item 
$\rho = \eta_{\epsilon_1 \epsilon_2}(S_1 \circ S_2)$ 
satisfies $\rho_X = S_1(\theta_X)\circ \eta_{S_2 X}$
for all objects $X$.
\item  
$(F, \eta, \theta)$ is a bigraded functor of $(\C, S_1, S_2)$.
\item  
For all objects $X$ and $f \in \Hom(S_1^i S_2^j X, F S_1^i S_2^j X)$, 
we have 
\[
 \Tr_X(S_1^{-i} S_2^{-j}( (\eta^i \theta^j)_X \circ f)) = 
\epsilon_1^i \epsilon_2^j \Tr_{S_1^i S_2^j X}(f).
\]
\end{enumerate}
\end{lemma}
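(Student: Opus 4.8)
```latex
\noindent\textbf{Proof plan.}
The plan is to prove parts (1), (2), and (3) in sequence, using each part as a stepping stone for the next, and drawing throughout on the characterization of $\eta_\epsilon(S)$ by the trace identity \eqref{E2.2.2}. For part (1), I would use the uniqueness built into the construction of $\eta_\epsilon(S)$: the natural isomorphism $\eta_{\epsilon_1\epsilon_2}(S_1\circ S_2)$ is the \emph{unique} natural isomorphism $\rho\colon F(S_1 S_2)\to (S_1 S_2)F$ satisfying the analogue of \eqref{E2.2.2} with $S = S_1 S_2$ and $\epsilon = \epsilon_1\epsilon_2$. So it suffices to verify that the candidate $\rho_X := S_1(\theta_X)\circ \eta_{S_2 X}$ satisfies that trace identity. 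I would substitute this candidate into the required formula $\Tr_X\bigl((S_1 S_2)^{-1}(\rho_X\circ f)\bigr) = \epsilon_1\epsilon_2\,\Tr_{S_1 S_2 X}(f)$ and verify it by applying the defining trace identity \eqref{E2.2.2} for $\eta$ and for $\theta$ one after the other. Concretely, applying the identity for $\theta = \eta_{\epsilon_2}(S_2)$ should convert $\Tr_{\,\cdot\,}$ at the level of $S_2$ with a factor $\epsilon_2$, and then applying the identity for $\eta = \eta_{\epsilon_1}(S_1)$ should peel off $S_1$ with a factor $\epsilon_1$, landing on $\Tr_{S_1 S_2 X}(f)$ with the combined scalar $\epsilon_1\epsilon_2$. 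Checking that $\rho_X$ is natural in $X$ is routine since it is a composite of natural isomorphisms.

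For part (2), I must verify that $(F,\eta,\theta)$ satisfies the compatibility diagram \eqref{E2.0.2}, i.e.\ that $S_1(\theta_X)\circ\eta_{S_2 X}$ and $S_2(\eta_X)\circ\theta_{S_1 X}$ agree after accounting for the strict commutation $S_1 S_2 = S_2 S_1$. The cleanest route is again uniqueness: both composites are natural isomorphisms $F S_1 S_2\to S_1 S_2 F$, and by part (1) (applied in both orders $S_1\circ S_2$ and $S_2\circ S_1$) each satisfies the trace identity \eqref{E2.2.2} with the same functor $S_1 S_2$ and the same scalar $\epsilon_1\epsilon_2$. Since that trace identity uniquely determines the natural isomorphism, the two composites must coincide, which is exactly the commutativity of \eqref{E2.0.2}.

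For part (3), I would induct on the pair $(i,j)$, reducing to repeated application of the single-variable trace identity \eqref{E2.2.2}. The associated natural transformation $\eta^i\theta^j$ on an object is, by the definitions recalled in Section~\ref{xxsec2.1}, built as $\Sigma^i(\theta^j_X)\circ\eta^i_{T^j X}$ (here with $S_1,S_2$ in place of $\Sigma,T$), and the formula \eqref{E2.0.1} lets me relate $\eta^i$ for larger $i$ to $\eta$ composed with itself. Peeling off one factor of $S_2$ at a time via \eqref{E2.2.2} yields the scalar $\epsilon_2^j$ and replaces $\Tr_X$ by a trace at $S_2^j X$; then peeling off one factor of $S_1$ at a time yields $\epsilon_1^i$ and moves to $\Tr_{S_1^i S_2^j X}$. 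I expect the main obstacle to be purely bookkeeping: keeping the powers $\eta^i$, $\theta^j$ and the conjugated trace maps consistently aligned through \eqref{E2.0.1} and \eqref{E2.2.2}, and checking that the negative powers $i,j<0$ behave correctly given that $\eta^{-1}_X = S_1^{-1}(\eta^{-1}_{S_1^{-1}X})$ introduces the inverse scalar $\epsilon_1^{-1}$ — one must confirm the sign/power conventions make the exponents $\epsilon_1^i\epsilon_2^j$ come out right for all integers. None of these steps is conceptually hard; the content is entirely carried by the uniqueness characterization via the trace identity together with the compatibility of powers in \eqref{E2.0.1}.
```
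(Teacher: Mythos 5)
Your proposal is correct, but your proof of part (1) takes a different route from the paper's. The paper proves (1) by a direct diagram chase from the Yoneda-style \emph{definition} of $\eta_{\epsilon}(-)$ (the diagrams involving $\alpha_{X,Y}$ and $\beta_{X,Y}$ in Section~\ref{xxsec2.3}), whereas you instead invoke the \emph{characterization} of $\eta_{\epsilon}(S)$ by the trace identities \eqref{E2.2.2} --- a fact the paper asserts at the end of Section~\ref{xxsec2.3} ("one can check that the collection of trace identities \ldots uniquely determines $\eta$") --- and then verify that the candidate $S_1(\theta_X)\circ\eta_{S_2X}$ satisfies the identity for $S_1S_2$ with scalar $\epsilon_1\epsilon_2$ by two successive applications of \eqref{E2.2.2}. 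Your verification does go through: writing $(S_1S_2)^{-1}(\rho_X\circ f)=S_2^{-1}\bigl(\theta_X\circ h\bigr)$ with $h=S_1^{-1}(\eta_{S_2X}\circ f)\colon S_2X\to FS_2X$, the $\theta$-identity gives $\epsilon_2\Tr_{S_2X}(h)$ and the $\eta$-identity at $S_2X$ gives $\epsilon_1\Tr_{S_1S_2X}(f)$, as you predict. What your approach buys is a short, purely formal verification that avoids unwinding the defining diagrams; what it costs is reliance on the uniqueness assertion, which the paper leaves as "one can check" --- for completeness you should note that it follows from nondegeneracy of the pairing $\Tr_X(-\circ -)$, which pins down each $\eta_X$ object-by-object (so naturality of your candidate, while true, is not even needed for the identification). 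Your parts (2) and (3) coincide with the paper's proof: (2) is exactly the observation that part (1) applied to both orderings $S_1S_2=S_2S_1$ yields the same natural isomorphism, forcing \eqref{E2.0.2}; and (3) is the same induction via \eqref{E2.0.1} and \eqref{E2.2.2}, where your attention to the negative-power conventions ($\eta^{-1}_X=S_1^{-1}(\eta^{-1}_{S_1^{-1}X})$ contributing $\epsilon_1^{-1}$) is precisely the bookkeeping the paper suppresses.
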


\begin{proof}
(1)  This follows from a routine diagram chase, 
using the definition of the natural transformations $\eta_{\epsilon}(-)$.

(2) Since $S_1 S_2 = S_2 S_1$ by assumption, applying part (1) to both 
$S_1 S_2$ and $S_2 S_1$ shows that the condition \eqref{E2.0.2} holds.

(3)  Recalling the definition of the natural isomorphism $\eta^i \theta^j$ from Section~\ref{xxsec2.1}, this follows easily from (1), induction, and \eqref{E2.2.2}.
\end{proof}

The Serre functor $F$ is known to be unique up to natural isomorphism 
of functors \cite[Lemma I.1.3]{ReitenVanDenBergh}. Once $F$ is fixed, there is some choice in the isomorphisms 
$\alpha_{X, Y}$ which implement the Serre duality.  The natural 
transformation $\eta_{\epsilon}(S)$ may depend on this choice and so 
is uniquely determined only once $F$ and the $\alpha_{X, Y}$ are fixed.
To conclude this section, we discuss in more detail what effect changing the $\alpha_{X, Y}$
has on $\eta_{\epsilon}(S)$.

Fix the Serre functor $F$, and consider two families of isomorphisms 
satisfying Definition~\ref{xxdef2.2}, say $\{ \alpha_{X, Y} \}$ and 
$\{ \alpha'_{X,Y} \}$.   Then for all pairs $X, Y$, we get an isomorphism
\[
\gamma_{X, Y} = \alpha'_{X, Y} \circ \alpha_{X, Y}^{-1} : 
\Hom(Y, FX)^* \to \Hom(Y, FX)^*.
\]
Varying $Y$, by Yoneda's lemma this gives a natural isomorphism 
$\Psi: F \to F$ such that $\gamma_{X, Y}$ is the $k$-linear dual of 
$f \mapsto \Psi_X \circ f$.  Conversely, given a natural isomorphism 
$\Psi: F \to F$ and a family of isomorphisms $\{ \alpha_{X, Y} \}$ satisfying 
Definition~\ref{xxdef2.2}, we can define $\gamma_{X, Y}$ as the $k$-linear dual 
of $f \mapsto \Psi_X \circ f$ and then define $\alpha'_{X, Y} = \gamma_{X, Y} \circ \alpha_{X, Y}$, giving 
another family $\{ \alpha'_{X, Y} \}$ of natural isomorphisms satisfying Definition~\ref{xxdef2.2}.
Thus the possible choices of $\{ \alpha_{X, Y} \}$ 
are in bijection with the group of natural isomorphisms from $F$ to itself, which is 
the group of units of the ring of endomorphisms of $F$ in the category of functors.  
Since $F$ is an autoequivalence of $\C$, it is routine to check that the ring of endomorphisms of $F$ 
is isomorphic to the ring of endomorphisms of the identity functor.   Moreover, the 
ring of  endomorphisms of the identity functor is called the \emph{center} of the category $\C$.
Thus the possible families of natural isomorphisms $\{ \alpha_{X, Y} \}$ satisfying Definition~\ref{xxdef2.2} are in bijective correspondence 
with the units of the center of $\C$.

Now suppose we have fixed $F$ and a family of natural isomorphisms $\{ \alpha_{X, Y} \}$, and consider 
another choice $\{ \alpha'_{X, Y} \}$ corresponding to $\Psi: F \to F$ as above.
Let $S$ be an isomorphism of $\C$, and define the natural transformation 
$\eta = \eta_{\epsilon}(S): SF \to FS$ for some fixed $\epsilon \in k^{\times}$, using the $\alpha_{X, Y}$.
Let $\eta' = \eta'_{\epsilon}(S): FS \to SF$ be defined using the $\alpha'_{X, Y}$ instead.  
 A straightforward diagram chase shows that for any object $X$,
\begin{equation}
\label{E2.4.1}\tag{E2.4.1}
\eta'_X = S(\Psi_X)^{-1} \circ \eta_X \circ \Psi_{SX}.
\end{equation}
If one recalls the definitions from Section~\ref{xxsec2.1}, another way to interpret this equation is to say that $(F, \eta)$ and $(F, \eta')$ are isomorphic as graded functors from the graded category
$(\C, S)$ to itself.  Conversely, if an isomorphism of graded functors $(F, \eta_{\epsilon}(S)) \cong 
(F, \rho)$ is given, then one may easily see that there is a choice of natural isomorphisms $\{ \alpha'_{X, Y} \}$ satisfying 
Definition~\ref{xxdef2.2} such that $\rho = \eta'_{\epsilon}(S)$.

\subsection{Frobenius endomorphism algebras}
\label{xxsec2.4}
 
Our main application of the general theory of the past few sections will 
be to a much more specialized setting, which we describe now.

\begin{hypothesis}
\label{xxhyp2.5} 
Let $\C$ be a Hom-finite $k$-linear triangulated category
with translation $\Sigma$.
\begin{enumerate}
\item
We assume that $\C$ has a Serre functor $F$ and that 
isomorphisms $\alpha_{X, Y}$ implementing the Serre duality as in 
Definition~\ref{xxdef2.2} are fixed.  
We fix the natural isomorphism $\eta = \eta_{-1}(\Sigma): F \Sigma \to 
\Sigma F$ which makes $F$ an exact functor of $(\C, \Sigma)$.
\item 
We assume that $T$ is an automorphism of $\C$ such that $T\Sigma = 
\Sigma T$ and that the identity natural isomorphism $\1_{T \Sigma}: 
T \Sigma \to \Sigma T$ also makes $(T, \1_{T \Sigma})$ an exact
functor of $(\C, \Sigma)$.  We fix the natural isomorphism 
$\theta = \eta_1(T): F T \to T F$, so that $(F, \eta, \theta)$ is a 
bigraded functor of $(\C, \Sigma, T)$, as in Lemma~\ref{xxlem2.4}. We say 
that $(C, \Sigma, T)$ is a \emph{$\mb{Z}$-graded triangulated category}.
\item 
We may work instead in the multi-graded setting and replace $T$ with 
a set $T_1, \dots, T_w$ of pairwise commuting automorphisms of $\C$, 
where each $T_i$ separately satisfies the properties of (2).  In this 
case we say that $(\C, \Sigma, T_1, \dots, T_w)$ is a 
\emph{${\mathbb Z}^w$-graded triangulated category}. All of our results 
extend easily to this multi-graded case, but we will state them only 
in the case of a single automorphism $T$ for simplicity.   Our results 
below can also be specialized to the case $w = 0$, where there are no automorphisms $T$, but 
we always assume that $w = 1$ unless otherwise stated.
\item 
We assume that there is an additional autoequivalence $\Phi$ of $C$
which also commutes with both $\Sigma$ and $T$, such that
$(\Phi, \1_{\Phi \Sigma})$ is an exact graded functor of $(\C, \Sigma)$.  
We assume that the Serre functor $F$ is of the form 
$F = (\Sigma)^d \circ T^{\bfl}  \circ \Phi$.
Since $\Sigma, T$, and $\Phi$ all commute, we also have 
$F\Sigma=\Sigma F$ and $FT=T F$ as functors. 
\item
Finally, we assume that the natural isomorphisms $\eta$ and $\theta$ are 
scalar multiples of the trivial commutations $\1_{F \Sigma} : F \Sigma \to \Sigma F$ 
and $\1_{F T}: F T \to T F$, in other words that 
$\eta = s \cdot \1_{F \Sigma}$ and $\theta = t \cdot \1_{FT}$ 
for some $s, t \in k^\times$. 
\end{enumerate}
\end{hypothesis}

In Section~\ref{xxsec3}, we will give many examples satisfying the 
hypothesis above, which arise from considering subcategories of the 
bounded derived category of graded modules over an AS Gorenstein algebra 
$A$ of dimension $d$.  In these examples $\Sigma$ is the shift of complexes, $T$ is 
induced from the shift of grading on modules, and $\Phi$ is induced 
from twisting modules by some automorphism of $A$.  Moreover, 
in these examples we will calculate that $s = (-1)^d$ and $t =1$.

The main goal of the rest of this section is to show that in some cases the graded endomorphism 
algebra of an object in a category satisfying Hypothesis~\ref{xxhyp2.5} is Frobenius, 
and to give a formula for its Nakayama automorphism.
We begin by proving a bigraded version of the \emph{graded Serre duality} 
result established by Van den Bergh in \cite[Proposition A.4.3]{Bo}.  
As usual, a multi-graded version also holds.  In the remaining results in this section, 
under Hypothesis~\ref{xxhyp2.5} we freely commute the isomorphisms $\Sigma, T$, 
and $\Phi$ in formulas and proofs; we indicate a natural isomorphism 
only when it is not necessarily the trivial commutation.  For example, 
to properly interpret the expression $\Tr_Y(\Phi(f) * g)$ in part (2) of the next result, 
one should identify $\Sigma^{d-i} T^{{\bfl}-j} \Phi \Sigma^i T^j$ with 
$\Sigma^d T^{\bfl} \Phi = F$.

\begin{lemma}
\label{xxlem2.6}
Assume Hypothesis~\ref{xxhyp2.5} and its notation.  Consider the 
bigraded category $(\C, \Sigma, T)$ and its associated category 
$\C^{gr}$ with bigraded Hom sets. 
\begin{enumerate}
\item 
Given objects $X, Y \in \C$ and homogeneous elements 
$f \in \Hom^{i, j}(X,Y)$ and $g \in \Hom^{-i, -j}(Y, FX)$, one has
\begin{equation}
\label{E2.6.1}\tag{E2.6.1}
\Tr_X(g * f) = (-1)^i \Tr_Y(F\gr(f) * g).
\end{equation}
\item 
The formula of part (1) can be reinterpreted as follows:
if $f \in \Hom^{i,j}(X,Y)$ and $g \in \Hom^{d-i, {\bfl}-j}(Y,\Phi(X))$, then
\[
\Tr_X(g * f) = (-s)^i t^j \Tr_Y(\Phi(f) * g).
\]
\end{enumerate}
\end{lemma}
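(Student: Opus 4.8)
The plan is to prove part (1) first, reducing the bigraded trace identity to repeated application of the singly-graded identity \eqref{E2.2.1}, and then to deduce part (2) by bookkeeping the scalars $s,t$ coming from Hypothesis~\ref{xxhyp2.5}(5) together with the sign from part (1). For part (1), I would begin by unwinding the definitions of graded composition and of the induced functor $F\gr$ from Section~\ref{xxsec2.1}. Given $f \in \Hom^{i,j}(X,Y) = \Hom(X, \Sigma^i T^j Y)$ and $g \in \Hom^{-i,-j}(Y, FX) = \Hom(Y, \Sigma^{-i}T^{-j} F X)$, the graded composite $g * f = \Sigma^i T^j(g)\circ f$ lands in $\Hom(X, FX)$ so that $\Tr_X(g*f)$ makes sense; likewise $F\gr(f) * g$ lands in $\Hom(Y, FY)$. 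The goal is then to relate $\Tr_X(\Sigma^i T^j(g)\circ f)$ to $\Tr_Y(F\gr(f)\circ(\text{something})\circ g)$.

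The key step is the application of \eqref{E2.2.1}, which in its ungraded form reads $\Tr_X(g_0 \circ f_0) = \Tr_{Y_0}(F(f_0)\circ g_0)$ for composable maps. Applying this with $f_0 = f$ and $g_0 = \Sigma^i T^j(g)$ moves $f$ across the trace at the cost of replacing it with $F(f)$ and produces a factor $F(\Sigma^i T^j(g))$. To convert the functor-level expression $F(f)$ into the graded-functor expression $F\gr(f)$, I would invoke the definition $F\gr(f) = (\eta^i\theta^j)_Y \circ F(f)$ from the end of Section~\ref{xxsec2.1}, and use Lemma~\ref{xxlem2.4}(3) to evaluate the trace of the resulting composite involving $(\eta^i\theta^j)$; this is precisely where the scalars $\epsilon_1 = s$ and $\epsilon_2 = t$ enter through the identity in part (3) of that lemma. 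The sign $(-1)^i$ in \eqref{E2.6.1} should emerge from the choice $\epsilon_1 = -1$ used to define $\eta = \eta_{-1}(\Sigma)$ in Hypothesis~\ref{xxhyp2.5}(1); more precisely, the $(-1)^i$ records the interaction between the trace identity and the exact (sign-twisted) commutation of $F$ with $\Sigma$, whereas the commutation with $T$ is untwisted ($\epsilon_2 = 1$ in the abstract lemma, realized as $t$ here).

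For part (2), since $F = \Sigma^d T^{\bfl}\Phi$, an element $g \in \Hom^{d-i,\,{\bfl}-j}(Y,\Phi(X))$ is identified with an element of $\Hom^{-i,-j}(Y, FX)$ via the silent commutation $\Sigma^{d-i}T^{{\bfl}-j}\Phi = \Sigma^{-i}T^{-j}F$, so part (1) applies directly; I would then only need to rewrite $F\gr(f)$ in terms of $\Phi(f)$ using Hypothesis~\ref{xxhyp2.5}(5) (that $\eta,\theta$ are the scalar commutations $s\cdot\1_{F\Sigma}$ and $t\cdot\1_{FT}$), which replaces the abstract $(\eta^i\theta^j)$ factor by the explicit scalar $s^i t^j$ and turns the sign $(-1)^i$ from part (1) into $(-s)^i t^j$. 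The main obstacle I anticipate is purely bookkeeping: tracking the exact order in which $\Sigma^i$ and $T^j$ are applied to $g$, keeping the identifications of the various $\Hom$-spaces under the commutations $\Sigma T = T\Sigma$ consistent, and verifying that the powers of $\eta$ and $\theta$ assemble correctly via formula \eqref{E2.0.1} and Lemma~\ref{xxlem2.4}(1) so that the scalar collected is exactly $(-s)^i t^j$ and not, say, its inverse or a variant with a shifted exponent. No single step is deep, but the sign and scalar must be pinned down with care.
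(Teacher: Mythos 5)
Your outline is essentially the paper's own proof: for part (1) the paper likewise unwinds the graded compositions, writes $F\gr(f) = (\eta^i\theta^j)_Y \circ F(f)$, and combines \eqref{E2.2.1} with Lemma~\ref{xxlem2.4}(3) (the paper runs the computation in the opposite direction, starting from $\Tr_Y(F\gr(f) * g)$, but the ingredients are identical); for part (2) it reinterprets $g \in \Hom^{d-i,\,\bfl-j}(Y,\Phi(X))$ as an element $\wt{g} \in \Hom^{-i,-j}(Y,FX)$ and uses Hypothesis~\ref{xxhyp2.5}(5) to trade the factor $(\eta^i\theta^j)_Y$ for the scalar $s^i t^j$, so that $(-1)^i s^i t^j = (-s)^i t^j$. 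Two misstatements in your plan should, however, be fixed in any write-up. First, applying \eqref{E2.2.1} with $f_0 = f$ and $g_0 = \Sigma^i T^j(g)$ yields $\Tr_X(\Sigma^i T^j(g)\circ f) = \Tr_{\Sigma^i T^j Y}\bigl(F(f)\circ \Sigma^i T^j(g)\bigr)$: the factor $\Sigma^i T^j(g)$ passes through unchanged, and no term $F(\Sigma^i T^j(g))$ appears --- nor should it, since the target expression $\Tr_Y(F\gr(f) * g) = \Tr_Y(\Sigma^{-i}T^{-j}(F\gr(f))\circ g)$ involves $g$ itself, not $F(g)$; taken literally, your step would derail the computation. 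Second, the scalars entering through Lemma~\ref{xxlem2.4}(3) in part (1) are $\epsilon_1 = -1$ and $\epsilon_2 = 1$, the parameters used in Hypothesis~\ref{xxhyp2.5}(1) and (2) to define $\eta = \eta_{-1}(\Sigma)$ and $\theta = \eta_1(T)$; they are not $s$ and $t$. The scalars $s, t$ of Hypothesis~\ref{xxhyp2.5}(5) are the \emph{values} of the natural transformations $\eta, \theta$ (as multiples of the trivial commutations) and play no role in part (1); they enter only in part (2), exactly as the final paragraph of your plan then correctly describes. With these two local corrections, your plan reproduces the paper's argument step for step.
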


\begin{proof}
(1) This is very similar to Van den Bergh's proof of \cite[Proposition A.4.3]{Bo}, but for completeness 
we include the details. Applying definitions  we have 
\begin{align*}
\Tr_Y(F\gr(f) * g) &= \Tr_Y(\Sigma^{-i}T^{-j}(F\gr(f)) \circ g)  \\
& = \Tr_Y(\Sigma^{-i}T^{-j}( (\eta^i \theta^j)_Y  \circ F(f)) \circ g) \\
 &= \Tr_Y(\Sigma^{-i}T^{-j}( (\eta^i \theta^j)_Y  \circ F(f)  
\circ \Sigma^iT^j(g)))  \\
&=  (-1)^i \Tr_{\Sigma^i T^j Y}(F(f) \circ \Sigma^iT^j(g))\ \ \ \;\; \quad 
\qquad\; \quad \text{by}\ \text{Lemma}\ \ref{xxlem2.4}(3)\\
&=  (-1)^i  \Tr_X(\Sigma^iT^j(g) \circ f)\ \ \ \qquad\qquad \quad 
\qquad\; \quad 
\text{by}\ \eqref{E2.2.1} \\ 
&= (-1)^i  \Tr_X(g * f),
\end{align*}
as we needed.

(2) We can reinterpret 
\[
g \in \Hom^{d-i, {\bfl}-j}(Y,\Phi(X)) = \Hom(Y,\Sigma^{d-i} T^{{\bfl}-j} \Phi(X))
\]
as the element 
\[
\wt{g} \in \Hom^{-i,-j}(Y, \Sigma^d T^{\bfl} \Phi(X)) = \Hom^{-i,-j}(Y, FX). 
\]
Thus $g$ and $\wt{g}$ are the same morphism, but as graded morphisms 
they have different degrees.

Now the graded Serre duality formula~\eqref{E2.6.1} for $f$ 
and $\wt{g}$ 
reads as follows:
\begin{align*}
\Tr_X(\Sigma^iT^j(\wt{g}) \circ f) &= \Tr_X(\wt{g} * f) \\
&= (-1)^i \Tr_Y(F\gr(f) * \wt{g}) & \mbox{by~\eqref{E2.6.1}}\\
&= (-1)^i \Tr_Y(\Sigma^{-i}T^{-j}(F\gr(f)) \circ \wt{g}) \\
&= (-1)^i 
\Tr_Y(\Sigma^{-i}T^{-j}([\eta^i\theta^j]_Y \circ \Sigma^d T^{\bfl} \Phi(f)) 
\circ \wt{g}) &  \\
&= (-s)^i t^j \Tr_Y(\Sigma^{d-i} T^{{\bfl}-j}\Phi(f) \circ \wt{g}).
& \mbox{by~Hyp.~\ref{xxhyp2.5}(5)}
\end{align*}
Viewing $g$ as an element of $\Hom^{d-i, {\bfl}-j}(Y,\Phi(X))$ 
again, this equation
translates to the desired formula:
\begin{align*}
\Tr_X(g * f) &= \Tr_X(\Sigma^iT^j(g) \circ f) \\
&= \Tr_X(\Sigma^iT^j(\tilde{g}) \circ f) \\
&= (-s)^i t^j \Tr_Y(\Sigma^{d-i}T^{{\bfl}-j} \Phi(f) \circ \wt{g}) \\
&= (-s)^i t^j \Tr_Y(\Phi(f) * g). \qedhere
\end{align*}
\end{proof}

We now show that the bigraded endomorphism algebra 
\[
\Hom\gr(X, X) = \bigoplus_{i,j}\Hom(X, \Sigma^i T^j X)
\]
is a Frobenius algebra in certain cases. Since we do not yet have any 
assumptions  to ensure that this algebra is finite dimensional, we rely on 
the infinite dimensional generalization of Frobenius algebra in~\cite{Jans}.
Following Jans, a $k$-algebra $A$ is \emph{Frobenius} if there is an 
associative nondegenerate bilinear pairing $(-,-) \colon A \times A \to k$, 
as well as an algebra automorphism $\mu$ of $A$
satisfying $(x,y) = (\mu(y), x)$ for all $x,y \in A$. This $\mu$ 
is called a \emph{Nakayama automorphism},
and in the classical case where $A$ is finite dimensional, the 
existence of $\mu$ follows directly from the existence of $(-,-)$.
For $\Hom\gr(X,X)$ to be Frobenius, one needs $\Phi$ to preserve 
$X$ in some sense.  Having $\Phi(X) = X$ would certainly be 
sufficient, but in practice one is much more likely to find 
$X \cong \Phi(X)$ than equality on the nose.  Under the latter assumption 
we can prove the following result.
For any homogeneous element $g$, the degree of $g$ is denoted by
$|g|$ or $\deg (g)$ below.

\begin{theorem}
\label{xxthm2.7}
Assume Hypothesis~\ref{xxhyp2.5} and its notation, so $(\C, \Sigma, T)$ 
is a bigraded category. 
Let $X \in \C$ be an object such that there exists an
isomorphism $\phi \colon X \to \Phi(X)$. Then 
\[
E(X) = \Hom\gr(X,X) = \bigoplus_{i,j} \Hom(X, \Sigma^i T^j X),
\]
with multiplication given by composition $*$ in $\C\gr$, 
is a bigraded Frobenius algebra.   An associative nondegenerate 
bilinear form is defined on homogeneous elements $f,g \in E(X)$ with 
degrees $|g| = (i,j)$ and $|f| = (d-i, {\bfl}-j)$ by
\[
(f, g) = \Tr_X(\phi * f * g)
= \Tr_X(\Sigma^d T^{\bfl}(\phi) \circ \Sigma^i T^j(f) \circ g), 
\]
and by $(f, g) = 0$ if $|f| + |g| \neq (d, \bfl)$.
The corresponding Nakayama automorphism is defined as follows:
for any $g \in \Hom^{i,j}(X, X)$,
$$
g \longmapsto (-s)^i t^j \, \phi^{-1} * \Phi(g) * \phi 
= (-s)^i t^j \, \Sigma^i T^j(\phi)^{-1} \circ \Phi(g) \circ \phi.
$$
\end{theorem}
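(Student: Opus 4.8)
The plan is to verify directly that the bilinear form $(f,g) = \Tr_X(\phi * f * g)$ is associative and nondegenerate, and then to extract the Nakayama automorphism from the symmetry relation $(f,g) = (\mu(g), f)$. Associativity, meaning $(fg, h) = (f, gh)$ for homogeneous $f, g, h$, should follow almost formally once one checks that the degrees match up: both sides are nonzero only when $|f| + |g| + |h| = (d, \bfl)$, and in that case $(fg, h) = \Tr_X(\phi * (f*g) * h)$ while $(f, gh) = \Tr_X(\phi * f * (g*h))$, which agree by associativity of the graded composition $*$ in $\C\gr$ (established in Section~\ref{xxsec2.1}). So associativity is essentially immediate.

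The substantive work is nondegeneracy, and here the main input is Lemma~\ref{xxlem2.6}(2) together with the nondegeneracy of the Serre trace pairing. First I would fix the bigraded decomposition $E(X) = \bigoplus_{i,j} \Hom(X, \Sigma^i T^j X)$ and observe that the form pairs the degree-$(i,j)$ piece against the degree-$(d-i, \bfl-j)$ piece. On such a pair, $(f,g) = \Tr_X(\Sigma^d T^{\bfl}(\phi) \circ \Sigma^i T^j(f) \circ g)$ identifies, via the isomorphism $\phi \colon X \to \Phi(X)$, with the Serre duality pairing between $\Hom(X, \Sigma^i T^j \Phi(X))$ and $\Hom(\Sigma^i T^j X, F X)$ after transporting along $\phi$; concretely, composing with the isomorphism $\phi$ turns the trace pairing on $\Hom(X, \Sigma^i T^j X) \times \Hom(\Sigma^i T^j X, \Phi(X) \cdot \text{(shifted)})$, which is nondegenerate by Definition~\ref{xxdef2.2}, into the form $(-,-)$. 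Since $\phi$ is an isomorphism, precomposition and postcomposition by $\phi$ (and by the invertible functors $\Sigma^i T^j$) are bijections, so nondegeneracy of the trace pairing transfers to nondegeneracy of $(-,-)$ in each complementary pair of degrees.

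To compute the Nakayama automorphism, I would write out both $(f,g)$ and $(g,f)$ for $g \in \Hom^{i,j}(X,X)$ and $f \in \Hom^{d-i,\bfl-j}(X,X)$, and use Lemma~\ref{xxlem2.6}(2) to move $f$ past the trace. Starting from $(f,g) = \Tr_X(\phi * f * g)$, one first absorbs $\phi$ to view $\phi * f$ as a morphism landing in $\Phi(X)$, applies the identity $\Tr_X(h * g) = (-s)^i t^j \Tr_X(\Phi(g) * h)$ from Lemma~\ref{xxlem2.6}(2) with $h = \phi * f$, and then rewrites $\Phi(g) * \phi * f$ using $\phi$ to reduce it to an expression of the form $\Tr_X(\phi * (\mu(g)) * f) = (\mu(g), f)$. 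Matching this against the defining relation $(f,g) = (\mu(g), f)$ should yield exactly $\mu(g) = (-s)^i t^j \, \phi^{-1} * \Phi(g) * \phi$. The one point requiring care is bookkeeping of the functors: one must use the commutation hypotheses (Hypothesis~\ref{xxhyp2.5}(4),(5)) to slide $\Phi$ and the shift functors past one another and to identify $\Phi(\phi^{-1} * \Phi(g) * \phi)$-type expressions, and check that $\mu$ so defined is multiplicative, i.e. $\mu(g_1 * g_2) = \mu(g_1) * \mu(g_2)$, which follows since $\Phi$ is a functor and the scalar $(-s)^i t^j$ is multiplicative in the degree.

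The main obstacle I expect is the sign and scalar bookkeeping in the degree-shift functors: correctly tracking the factor $(-s)^i t^j$ and the identifications $\Sigma^i T^j(\phi)$ versus $\Sigma^d T^{\bfl}(\phi)$ when passing between the trace formula and the graded composition $*$, and confirming that the resulting $\mu$ is genuinely an algebra automorphism rather than merely a linear bijection. Everything else is a formal consequence of the Serre trace identities already packaged in Lemma~\ref{xxlem2.6}.
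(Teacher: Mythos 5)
Your proposal is correct and follows essentially the same route as the paper's proof: associativity from associativity of $*$ in $\C\gr$, nondegeneracy by transporting the form along the isomorphism $\phi$ (and the invertible functors $\Sigma^i T^j$) to the nondegenerate Serre trace pairing $\Tr_X(-\circ -)$, and the Nakayama automorphism extracted exactly as you describe by applying Lemma~\ref{xxlem2.6}(2) to $h = \phi * f$ and rewriting $\Phi(g)*\phi = \phi*(\phi^{-1}*\Phi(g)*\phi)$. Your extra remark about verifying multiplicativity of $\mu$ is a point the paper leaves implicit (it follows from functoriality of $\Phi$ on $\C\gr$, conjugation by $\phi$, and multiplicativity of the scalar $(-s)^i t^j$ in the degree), so including it only strengthens the write-up.
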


\begin{proof}
To check associativity of the pairing $(-,-)$, it suffices to assume that
$a,b,c \in E(X)$ are homogeneous with $|a| + |b| + |c| = (d,{\bfl})$ and note 
that
\[
(a * b, c)  = \Tr_X(\phi * (a * b) * c) = 
\Tr_X(\phi * a * (b * c)) = (a, b * c),
\]
relying upon associativity of graded composition.

To see that the pairing is nondegenerate, it is again sufficient to 
work with homogeneous elements in $E(X)$. 
Suppose that $g \in \Hom^{i,j}(X,X)$ 
is such that $(g,-) \colon E(X) \to k$ is the zero functional; we 
will show that $g = 0$. In particular, for every $f \in \Hom^{d-i, {\bfl}-j}(X,X)$,
we have 
\[
0 = (g,f) = \Tr_X(\phi * g * f) = 
\Tr_X(\Sigma^{d-i}T^{{\bfl}-j}(\phi * g) \circ f),
\]
where $\Sigma^{d-i}T^{{\bfl}-j}(\phi * g) \in 
\Hom(\Sigma^{d-i} T^{{\bfl}-j}X,\Sigma^d T^{\bfl}\Phi X) 
= \Hom(\Sigma^{d-i}T^{{\bfl}-j} X, FX)$.
Because the pairing $\Tr(- \circ -) \colon 
\Hom(\Sigma^{d-i}T^{{\bfl}-j} X, FX) \times 
\Hom(X, \Sigma^{d-i}T^{{\bfl}-j}X) \to k$
is nondegenerate, it follows that $\Sigma^{d-i}T^{{\bfl}-j}(\phi * g) = 0$. 
As $\Sigma$ and $T$ are automorphisms of $\C$, we obtain 
$0 = \phi * g = \Sigma^iT^j(\phi) \circ g$. Now because $\phi$ is 
an isomorphism, we conclude that $g = 0$ as desired.

Finally, we compute the Nakayama automorphism, making use of the graded
Serre duality formula from Lemma~\ref{xxlem2.6}. Assume that
$f,g \in E(X)$ are homogeneous, again with $|g| = (i,j)$ and 
$|f| = (d-i, {\bfl}-j)$.
Then $g \in \Hom^{i,j}(X,X)$ and 
$$\phi * f = \Sigma^{d-i} T^{\bfl-j}(\phi) \circ f 
\in \Hom(X,\Sigma^{d-i}T^{{\bfl}-j} \Phi X) =
\Hom^{-i,-j}(X,FX).$$  
Furthermore, as $\phi$ is an isomorphism in $\C$ and
$\phi \in \Hom^0(X,\Phi X)$, it has an inverse in $\C\gr$ which is 
also given by $\phi^{-1} \in \Hom^0(\Phi X, X)$.   So we have 
\begin{align*}
(f,g) &= \Tr_X((\phi * f) * g) \\
&= (-s)^i t^j \Tr_X(\Phi(g) * (\phi * f)) & 
\mbox{by Lemma~\ref{xxlem2.6}(2)}\\
&= (-s)^i t^j \Tr_X( \phi * (\phi^{-1} * \Phi(g) * \phi) * f) \\
&= (-s)^i t^j (\phi^{-1} * \Phi(g) * \phi, f).
\end{align*}
Thus the homomorphism $g \mapsto (-s)^it^j \phi^{-1} * \Phi(g) * \phi$
(for $g \in \Hom^{i,j}(X, X)$)
satisfies the defining property of a Nakayama automorphism of $E(X)$.
\end{proof}

\subsection{Skew Calabi-Yau triangulated categories}
\label{xxsec2.5}


If $\C$ is a Hom-finite $k$-linear triangulated category with Serre 
functor $F$, then as noted in Remark~\ref{xxrem2.3} we can make $F$ into a exact graded functor of 
$(\C, \Sigma)$ using $\eta = \eta_{-1}(\Sigma)$, which satisfies 
the trace formula \eqref{E2.2.2} with $\epsilon = -1$.  Then $\C$ is called a 
\emph{Calabi-Yau triangulated category} if
$(F, \eta) \cong (\Sigma, -\1_{\Sigma^2})^d = 
(\Sigma^d, (-1)^d \1_{\Sigma^{d+1}})$ as graded functors \cite{Keller}.
Recalling the discussion in Section~\ref{xxsec2.3}, it is equivalent to say that 
$F = \Sigma^d$ is a Serre functor, and there is a choice of the 
isomorphisms $\{ \alpha_{X, Y} \}$ implementing the Serre duality such that 
$\eta_{-1}(\Sigma) = (-1)^d \1_{\Sigma^{d+1}}$. 
Then Hypothesis~\ref{xxhyp2.5} applies in this case with 
$w = 0$, $\Phi$ the identity functor,  and with $s = (-1)^d$.    Theorem~\ref{xxthm2.7}
applies to every object $X$ of $\C$ with $\phi = \id_X$, so that 
the Ext-algebra $E(X)$ is always a graded-symmetric Frobenius 
algebra (that is, its Nakayama automorphism maps $g$ to $(-1)^{|g|(d+1)}
g$). These results are well known in the context 
of Calabi-Yau triangulated categories; 
see~\cite[Proposition~A.5.2]{Bo} and~\cite[Proposition~2.2]{Keller}.

Recall that a full subcategory $\D$ of a triangulated category $(\C, \Sigma)$ is called 
a \emph{triangulated subcategory} if $\D$ is closed under $\Sigma$ and if for all exact triangles, 
if two objects in the triangle are in $\D$, so is the third.  The full triangulated subcategory $\D$ of $\C$
is called \emph{thick} if it contains all direct summands of each of its objects.  Given a class $\mc{S}$ of 
objects of $\C$, the triangulated (or thick) subcategory generated by $\mc{S}$ is the smallest 
triangulated (respectively, thick) subcategory of $\C$ containing $\mc{S}$.
These categories have the following more explicit description.  Let $\mc{S}_1$ be the class of objects of the form 
$\{ \Sigma^i(X) | i \in \mb{Z}, X \in \mc{S} \}$.  Define $\mc{S}_n$ inductively for $n \geq 2$ as the class of 
all objects $Y$ which occur in exact triangles $X \to Y \to Z \to \Sigma(X)$ with $X, Z \in \mc{S}_{n-1}$.  Then the 
full subcategory with objects in $\mc{D} = \bigcup_{n \geq 1} \mc{S}_n$
is the triangulated subcategory generated by $\mc{S}$, and the full subcategory of all direct summands of objects in $\mc{D}$ is the thick subcategory 
generated by $\mc{S}$ \cite[Section 3.3]{Kr}.

\begin{definition}
\label{xxdef2.8}
Let $(\C, \Sigma)$ be a $k$-linear triangulated category
and $(\Phi, \eta)$ an exact autoequivalence of $\C$.
\begin{enumerate}
\item
An object $X$ in $\C$ is called $\Phi$-\emph{plain} if $\Phi(X)\cong X$.
\item
Let $\Xi(\Phi)$ denote the class of $\Phi$-plain objects in $\C$, 
and let $\C^{pl}$ be the thick subcategory of $\C$ generated by $\Xi(\Phi)$.
We say $\Phi$ is \emph{plain} if $\C^{pl} = \C$.
\end{enumerate}
\end{definition}

It is obvious that the identity functor is plain.  In general, $\Sigma^{-} = (\Sigma, - \1_{\Sigma^2})$
is not plain.  Similarly, in the $\Z$-graded triangulated categories $(\C, \Sigma, T)$ we study in the next section, usually 
$\Sigma$ and $T$ are not plain (see the proof of Lemma~\ref{xxlem3.6}(1)).
Intuitively, a plain functor must be close to being the identity
functor.  We now propose the following definition of \emph{skew Calabi-Yau
category} in the $\Z$-graded setting. Note that a modified version of the 
following definition can easily be given in both the multi-graded and  
the ungraded cases.

\begin{definition}
\label{xxdef2.9}
Let $(\C, \Sigma, T)$ be a $\mathbb{Z}$-graded $k$-linear Hom-finite triangulated category satisfying Hypothesis~\ref{xxhyp2.5}, so 
that in particular $\C$ has a Serre functor  of the form $F = \Sigma^d \circ T^{\ell} \circ \Phi$.
\begin{enumerate}
\item
We say that $\C$ is \emph{{\rm{(}}$\Z$-graded{\rm{)}} $\Phi$-skew 
Calabi-Yau} if (for some choice of maps $\{ \alpha_{X, Y} \}$ satisfying Definition~\ref{xxdef2.2}) we have 
\begin{gather*}
\label{E2.9.1}\tag{E2.9.1}
(F, \eta_{-1}(\Sigma)) =
(\Sigma, -\1_{\Sigma^2})^d \circ (T^{\bfl}, 
\1_{\Sigma T^{\bfl}}) \circ (\Phi, \1_{\Sigma\Phi}) = (F,(-1)^d \1_{\Sigma F}) \ \ \ \text{and} \\
(F, \eta_{1}(T))  = 
(\Sigma, \1_{T \Sigma})^d \circ (T^{\bfl}, 
\1_{T^{\bfl +1}}) \circ (\Phi, \1_{T\Phi}) = (F,\1_{TF}),
\end{gather*}
as graded functors.  Equivalently, $s = (-1)^d$ and $t= 1$ in Hypothesis~\ref{xxhyp2.5}.
\item
If there is a {\bf plain} exact autoequivalence $\Phi$ such that 
\eqref{E2.9.1} holds,
then $\C$ is called \emph{{\rm{(}}$\Z$-graded{\rm{)}} 
skew  Calabi-Yau of dimension~$d$}. In this case, we say that $\Phi$ is a 
\emph{Nakayama functor} for $\C$ and $\bfl$ is the \emph{AS index.}
\end{enumerate}
\end{definition}


The reason that~\ref{xxdef2.9}(1) is not a sufficient definition for a
skew Calabi-Yau category is that there is no guarantee of uniqueness of
the data $(d,\bfl,\Phi)$ in the definition. The requirement that $\Phi$ is
plain leads to uniqueness in the main case of interest studied in the next section;
see Lemma~\ref{xxlem3.6}(2) below. Presumably there may be some
less stringent condition than plainness that could lead to uniqueness of
 $(d,\bfl,\Phi)$ for a $\Phi$-skew Calabi-Yau triangulated category.

The next lemma shows that there is a standard way of producing skew 
Calabi-Yau categories from $\Phi$-skew Calabi-Yau categories. The lemma
will be used in the next section. 

\begin{lemma}
\label{xxlem2.10} 
Let $\C$ be a $\Phi$-skew Calabi-Yau category.  
Then $(\C^{pl}, \Sigma, T)$ is a skew Calabi-Yau category. That is, $\Phi$ is plain on $\C^{pl}$.
\end{lemma}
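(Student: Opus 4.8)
The plan is to verify directly that $\C^{pl}$, equipped with the restrictions of $\Sigma$, $T$, and $\Phi$, satisfies all the requirements of Definition~\ref{xxdef2.9}(2). By that definition it suffices to check three things: that $\Sigma$, $T$, $\Phi$ (and hence $F = \Sigma^d T^{\bfl}\Phi$) restrict to autoequivalences of $\C^{pl}$; that $(\C^{pl}, \Sigma, T)$ is a Hom-finite $k$-linear triangulated category satisfying Hypothesis~\ref{xxhyp2.5} with Serre functor $F|_{\C^{pl}}$, the same data $(d, \bfl, \Phi)$, and scalars $s = (-1)^d$, $t = 1$, so that \eqref{E2.9.1} continues to hold; and finally that the restriction of $\Phi$ is plain on $\C^{pl}$, which is the actual assertion of the lemma.

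First I would show that each of $\Sigma$, $T$, $\Phi$ preserves the class $\Xi(\Phi)$ of $\Phi$-plain objects. This is a one-line computation in each case: since these functors commute, for any $X$ with $\Phi(X) \cong X$ one has $\Phi(\Sigma X) = \Sigma(\Phi X) \cong \Sigma X$, and similarly for $T$ and for $\Phi$ itself. Because each of these functors is an exact autoequivalence commuting with $\Sigma$, it maps the thick subcategory generated by $\Xi(\Phi)$ into itself (the generation process of Section~\ref{xxsec2.5} is built only from $\Sigma$, exact triangles, and direct summands, all of which are respected); applying the same observation to the inverse functors shows that $\Sigma$, $T$, and $\Phi$ restrict to autoequivalences of $\C^{pl}$. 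Consequently $F = \Sigma^d T^{\bfl} \Phi$ restricts as well.

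Next I would check that the Serre structure descends. As a thick subcategory, $\C^{pl}$ is a Hom-finite $k$-linear triangulated category with translation $\Sigma|_{\C^{pl}}$, and since it is full, its Hom-sets coincide with those of $\C$. For $X, Y \in \C^{pl}$ we have $FX \in \C^{pl}$, so the fixed isomorphisms $\alpha_{X,Y} \colon \Hom(X,Y) \to \Hom(Y, FX)^*$ of Definition~\ref{xxdef2.2} restrict verbatim and exhibit $F|_{\C^{pl}}$ as a Serre functor of $\C^{pl}$ with the same duality data. The natural isomorphisms $\eta_{-1}(\Sigma)$ and $\eta_1(T)$ are determined object-wise by these $\alpha$'s, hence restrict to the corresponding natural isomorphisms on $\C^{pl}$; since the defining equalities of Hypothesis~\ref{xxhyp2.5} and of \eqref{E2.9.1} assert only that $\eta$ and $\theta$ equal the trivial commutations scaled by $s = (-1)^d$ and $t = 1$ on every object, they hold on $\C^{pl}$ because they hold on $\C$. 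Thus $(\C^{pl}, \Sigma, T)$ is a $\Phi$-skew Calabi-Yau category in its own right.

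It remains to prove plainness, which is essentially formal. By definition $\Xi(\Phi) \subseteq \C^{pl}$, and every object of $\Xi(\Phi)$ is $\Phi$-plain as an object of $\C^{pl}$ as well; hence the class of $\Phi$-plain objects of $\C^{pl}$ contains $\Xi(\Phi)$. For any subclass $\mc{S} \subseteq \C^{pl}$, the thick subcategory of $\C^{pl}$ generated by $\mc{S}$ coincides with the thick subcategory of $\C$ generated by $\mc{S}$, since the inductive construction of Section~\ref{xxsec2.5} never leaves $\C^{pl}$. Taking $\mc{S} = \Xi(\Phi)$ shows that the thick subcategory of $\C^{pl}$ generated by its $\Phi$-plain objects already contains $\C^{pl}$, and therefore equals it; that is, $(\C^{pl})^{pl} = \C^{pl}$, so $\Phi$ is plain on $\C^{pl}$. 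The only genuine point requiring care above is the descent of the Serre functor in the third step, and this reduces entirely to the facts that $F$ preserves $\C^{pl}$ and that $\C^{pl}$ inherits its Hom-sets from $\C$; with those in hand the remainder is bookkeeping.
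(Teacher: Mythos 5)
Your proof is correct and follows essentially the same route as the paper's: show that $\Sigma$, $T$, and $\Phi$ (hence $F$) restrict to $\C^{pl}$ using the explicit inductive description of the thick subcategory generated by $\Xi(\Phi)$, observe that the $\Phi$-skew Calabi-Yau structure of Definition~\ref{xxdef2.9}(1) descends to this full subcategory, and conclude plainness directly from the definition of $\C^{pl}$. You fill in details the paper leaves implicit (the verbatim restriction of the Serre duality isomorphisms $\alpha_{X,Y}$ and the agreement of thick subcategories generated inside $\C^{pl}$ versus inside $\C$), but the underlying argument is the same.
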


\begin{proof} 
Since $T$ commutes with $\Phi$, the functor $T$ maps $\Xi(\Phi)$ to itself.  Since $(T, \1_{T\Sigma})$ is 
an exact functor of $(\C, \Sigma)$, using the explicit description of 
the thick subcategory generated by a class of objects given earlier in this section, one easily sees that $T$
restricts to the subcategory $\C^{pl}$.  Similarly, $T^{-1}$ restricts to $\C^{pl}$, so $T$ restricts to an automorphism of 
$\C^{pl}$, and a straightforward argument can be used to show that $T^{-1}$ is also exact (alternatively, one can
apply~\cite[Lemma~49]{Murfet}).
Similarly, $\Sigma$ restricts to an automorphism of $\C^{pl}$, and $\Phi$ restricts to an autoequivalence of $\C^{pl}$.  
Thus $F$ restricts to an autoequivalence of $\C^{pl}$, and clearly Definition~\ref{xxdef2.9}(1) still holds for the restricted category, 
that is,  $(C^{pl}, \Sigma, T)$  is a $\Phi$-skew Calabi-Yau category.  By the definition of $\C^{pl}$, 
$\Phi$ is plain when restricted to $\C^{pl}$. The assertion follows. 
\end{proof}

\section{Skew Calabi-Yau categories related to AS Gorenstein algebras}
\label{xxsec3}
In this section, we show that a certain subcategory of the derived 
category of graded modules over an AS Gorenstein algebra is a skew 
Calabi-Yau triangulated category.

Throughout this section, $A$ will be an algebra which is 
$\mb{Z}^w$-graded for some $w \geq 1$, and $|x| \in \mb{Z}^w$ will indicate 
the degree of a homogeneous element in $A$ or in a $\mb{Z}^w$-graded 
$A$-module.   Any such 
algebra $A$ is automatically also $\mb{Z}$-graded, using the homomorphism 
$\mb{Z}^w \to \mb{Z}$ which adds the coordinates, and we use 
$|| x ||$ to indicate the total degree of $x$ under this $\mb{Z}$-grading.  
We are interested primarily in algebras which are 
$\mb{N}$-graded with respect to the $|| \;\; ||$-grading.  
Recall that an $\mb{N}$-graded algebra 
$A = \bigoplus_{n \geq 0}  A_n$ is \emph{connected} if $A_0 = k$,  
and \emph{locally finite} if $\dim_k A_n < \infty$ for all $n \geq 0$.
Given an $\mb{N}$-graded algebra $A$, let $A \lGr$ be the category of
$\mb{Z}$-graded left $A$-modules and let $\Gamma = \Gamma_{\mf{m}_A}$ 
be the \emph{torsion functor}
$A \lGr \to A \lGr$ which is defined as follows:
\[
\Gamma(M) = \{x \in M | A_{\geq n} x = 0,\ \text{some}\ n \geq 1 \} 
= \lim_{n \to \infty} \Hom_A(A/A_{\geq n}, M).
\]
If $A$ is $\mb{Z}^w$-graded, then given a $\mb{Z}^w$-graded $A$-module $M$, the notation $M^*$ will indicate the 
\emph{graded dual}, that is $\bigoplus_{g \in \mb{Z}^w} \Hom_k(M_g, k)$.
If $M$ is an $(A, A)$-bimodule and $\sigma, \mu$ are automorphisms of 
$A$, then $^{\sigma} M ^{\mu}$ is the new bimodule with the same 
underlying vector space as $M$, but with left and right actions twisted 
by the indicated automorphisms, that is with $a * m * b = \sigma(a) m \mu(b)$ for $m \in M$, $a, b \in A$.  Similarly, 
we can define the twist $^\sigma M$ for a left module $M$.

As in \cite{RRZ}, we would like our results to apply to 
$\mb{N}$-graded algebras which are locally finite but 
not necessarily connected.   Following \cite[Definition 3.3]{RRZ}, 
we generalize the notion of AS Gorenstein to this setting as follows.

\begin{definition}
\label{xxdef3.1}
Let $A$ be a ${\mathbb Z}^w$-graded algebra, for some $w\geq 1$, such
that it is locally finite and ${\mathbb N}$-graded with respect to the
$\mid\mid \;\;\mid\mid$-grading. We say $A$ is a {\it generalized
AS Gorenstein algebra} if
\begin{enumerate}
\item
$A$ has injective dimension $d$.
\item
$A$ is noetherian and satisfies the $\chi$ condition
\cite[Definition 3.7]{AZ}, and the 
functor $\Gamma_{\fm_A}$ has finite cohomological dimension.
\item
There is an $A$-bimodule isomorphism 
$R^d\Gamma_{\fm_A}(A)^*\cong {^\mu A^1}(-{\bfl})$, for some 
$\bfl\in{\mathbb Z}^w$ 
(called the {\it AS index}) and for some graded algebra
automorphism $\mu$ of $A$ (called the {\it Nakayama automorphism}).  
\end{enumerate}
If, in addition, $A$ has finite global dimension, then $A$ is called a 
\emph{generalized AS regular algebra}.
\end{definition}
\noindent
An important consequence of the definition above is that a generalized 
AS Gorenstein algebra $A$ will have a rigid dualizing complex of the form 
$U := R^d\Gamma_{\fm_A}(A)^*[d] \cong {^\mu A^1}(-{\bfl})[d]$ 
(see the proof of \cite[Lemma 3.5]{RRZ}).

Next, we define some important triangulated categories associated to a 
generalized AS Gorenstein algebra $A$.
Let ${\mathcal D}(A)$ be the derived category of graded left $A$-modules, 
and let ${\mathcal D}^b_{f}(A)$ be the bounded derived category of finitely 
generated graded left $A$-modules, which can be viewed as a full triangulated 
subcategory of ${\mathcal D}(A)$.  Let ${\mathcal E}(A)$ be the full 
triangulated subcategory of 
${\mathcal D}^b_{f}(A)$ consisting of complexes with finite dimensional 
cohomologies.   Recall that a complex is \emph{perfect} if it is quasi-isomorphic 
to a bounded complex of finitely generated projective modules.  
Let ${\mathcal D}_{\perf}(A)$ be the full triangulated subcategory
of ${\mathcal D}(A)$ consisting of perfect complexes of graded left 
$A$-modules.
Finally, let ${\mathcal D}_{\epsilon}(A)$ be the full 
triangulated subcategory of ${\mathcal D}(A)$ consisting of objects
in both ${\mathcal D}_{\perf}(A)$ and ${\mathcal E}(A)$.
Note that if $A$ is generalized AS regular, 
then ${\mathcal D}^b_{f}(A)={\mathcal D}_{\perf}(A)$ and 
${\mathcal D}_{\epsilon}(A)={\mathcal E}(A)$.  We let $\Sigma$ be the 
translation functor, in other words the shift $X \mapsto X[1]$
of complexes,  in any of these triangulated categories.

Let $U$ be the rigid dualizing complex as above, and let $F$ be the functor 
$U\otimes_A^L -$. Let $V={^1 A^\mu}(\bfl)[-d]$. Then it is obvious that
$U\otimes^L_A V \cong V\otimes^L_A U\cong A$ as complexes of graded 
$A$-bimodules, so $G = V\otimes_A^L -$ is an inverse of the automorphism
$F$. Let $D = \RHom_A(-, U)$ be the duality functor ${\mathcal D}^b_f(A)\to 
{\mathcal D}^b_f(A^{op})$, which has inverse 
$D^{op} = \RHom_{A^{op}}(-,U): {\mathcal D}^b_f(A^{op}) \to 
{\mathcal D}^b_f(A)$ \cite[Proposition 1.3]{YeZ1}.  
Since $A$ is generalized AS Gorenstein, $\sum_i \dim_k \Ext^i_A(M,A)$ is
finite for any finite dimensional graded $A$-module $M$.  This implies that $D(X)\in {\mathcal E}(A^{op})$ whenever 
$X\in {\mathcal E}(A)$. Therefore 
the pair of functors $(D,D^{op})$ restrict to the subcategory
${\mathcal E}(A)$. 
The pair of functors $(D,D^{op})$ clearly restrict 
to ${\mathcal D}_{\perf}(A)$,  and thus also to ${\mathcal D}_{\epsilon}(A) 
= \mc{D}_{\perf}(A) \cap \mc{E}(A)$.   Similarly, $F$ and $G$ restrict to 
$\mc{D}_{\epsilon}(A)$.  

We can now prove that the category $\mc{D}_{\epsilon}(A)$ satisfies 
Serre duality.   

\begin{lemma}
\label{xxlem3.2} 
Let $A$ be a generalized AS Gorenstein algebra, and maintain 
the notation introduced above.
\begin{enumerate}
\item
\cite[Theorem 5.1 and 6.3]{VdB1}
$\R\Gamma_{\fm_A}(-)^*\cong D(-)$, as functors from 
${\mathcal D}^b_f(A)$ to ${\mathcal D}^b_f(A^{op})$.
\item
If $Y$ is in ${\mathcal E}(A)$, then $Y^*\cong D(Y)$.
\item
For any $X\in {\mathcal D}_{\perf}(A)$ and $Y\in {\mathcal E}(A)$, 
there is an isomorphism 
\begin{equation}
\label{E3.2.1}\tag{E3.2.1}
\alpha_{X,Y}:\quad 
\Hom_{{\mathcal D}(A)}(X,Y)\cong \Hom_{{\mathcal D}(A)}
(Y, F(X))^*, 
\end{equation}
and these isomorphisms are natural in $X$ and $Y$.
\item
The functor $F$ defined above is a Serre functor of the 
category ${\mathcal D}_{\epsilon}(A)$, and \eqref{E3.2.1} defines a 
Serre duality when restricted to ${\mathcal D}_{\epsilon}(A)$.
\end{enumerate}
\end{lemma}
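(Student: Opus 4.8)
The plan is to establish the four parts in order, each feeding into the next, so the logical chain is $(1) \Rightarrow (2) \Rightarrow (3) \Rightarrow (4)$. Part~(1) is the graded local duality theorem for the (balanced, rigid) dualizing complex $U$, and since $A$ is generalized AS Gorenstein (noetherian, satisfying $\chi$, with $\Gamma_{\fm_A}$ of finite cohomological dimension) it is exactly the content of \cite[Theorems 5.1 and 6.3]{VdB1}; I would simply invoke it. So the isomorphism $\R\Gamma_{\fm_A}(-)^* \cong D(-) = \RHom_A(-, U)$ of functors $\mc{D}^b_f(A) \to \mc{D}^b_f(A^{op})$ may be taken as given.

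For part~(2), the key point is that $\R\Gamma_{\fm_A}$ acts as the identity on $\mc{E}(A)$. The inclusion of functors $\Gamma_{\fm_A} \hookrightarrow \id$ derives to a natural morphism $\R\Gamma_{\fm_A}(Y) \to Y$, and I would show this is an isomorphism in $\mc{D}(A)$ for every $Y \in \mc{E}(A)$. Using the truncation triangles on the (finitely many, finite-dimensional) cohomology modules of $Y$ together with the fact that $\R\Gamma_{\fm_A}$ is exact, one reduces to the case $Y = M[0]$ for a single finite-dimensional graded module $M$. Such an $M$ is annihilated by $A_{\geq n}$ for some $n$, hence is $\fm_A$-torsion, so $\Gamma_{\fm_A}(M) = M$ and $M$ is $\Gamma_{\fm_A}$-acyclic; thus $\R\Gamma_{\fm_A}(M) \cong M$. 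Combining with part~(1) gives $D(Y) \cong \R\Gamma_{\fm_A}(Y)^* \cong Y^*$, naturally in $Y$.

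Part~(3) is where the real work lies, and it is here that perfectness of $X$ enters. First I would record the projection formula: since $X$ is perfect, the natural map $\RHom_A(Y, U) \otimes_A^L X \to \RHom_A(Y, U \otimes_A^L X)$ is an isomorphism (check it on $X = A$, then extend to perfect complexes via shifts, cones, and summands). Taking $H^0$ and writing $F(X) = U \otimes_A^L X$, this yields $\Hom_{\mc{D}(A)}(Y, F(X)) \cong H^0(D(Y) \otimes_A^L X)$. Next I would apply the tensor--Hom adjunction over $k$ to compute the graded dual:
\[
(D(Y) \otimes_A^L X)^* = \RHom_k(D(Y) \otimes_A^L X, k) \cong \RHom_A(X, (D(Y))^*).
\]
By part~(2) and reflexivity of the graded dual on objects with finite-dimensional cohomology, $(D(Y))^* \cong (Y^*)^* \cong Y$, so the right-hand side is $\RHom_A(X, Y)$. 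Taking $H^0$ and dualizing then produces the desired natural isomorphism
\[
\alpha_{X,Y}\colon \Hom_{\mc{D}(A)}(X, Y) \cong \Hom_{\mc{D}(A)}(Y, F(X))^*.
\]
Each constituent step is a natural transformation in both variables, so $\alpha_{X,Y}$ is natural; finiteness of the $\Hom$-spaces (needed for the double-dual step) holds because $X \in \mc{D}^b_f(A)$ and $Y$ has finite-dimensional cohomology.

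Finally, part~(4) is assembly. Objects of $\mc{D}_\epsilon(A) = \mc{D}_{\perf}(A) \cap \mc{E}(A)$ are simultaneously perfect and of finite-dimensional cohomology, so the category is Hom-finite and part~(3) applies to every pair $X, Y \in \mc{D}_\epsilon(A)$, giving natural isomorphisms $\alpha_{X,Y}$ implementing Serre duality in the sense of Definition~\ref{xxdef2.2}. Since $F$ (with inverse $G$) was already shown to restrict to an autoequivalence of $\mc{D}_\epsilon(A)$, it follows that $F$ is a Serre functor there. The main obstacle is the careful bookkeeping in part~(3): tracking left versus right module structures through the projection formula and the tensor--Hom adjunction, and confirming that perfectness of $X$ (not merely $X \in \mc{D}^b_f(A)$) is precisely what makes the projection formula an isomorphism.
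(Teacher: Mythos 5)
Your proposal is correct, and parts (1), (2), and (4) track the paper closely (for (2) the paper simply cites \cite[Lemma 4.4]{VdB1} for the fact that $\R\Gamma_{\fm_A}$ is the identity on $\mc{E}(A)$, which is exactly the acyclicity claim you sketch). But your part (3), the heart of the lemma, takes a genuinely different route. The paper never forms $D(Y)\otimes_A^L X$: instead it writes $Y \cong F(G(Y))$ and uses perfectness of $X$ to get $\RHom_A(X,Y) \cong D(X)\otimes_A^L G(Y)$, dualizes over $k$ to land in $\RHom_{A^{op}}(D(X), \Hom_k(G(Y),k))$, applies part (2) to $G(Y)$ (which costs an induction on $\sum_n \dim_k H^0(Y[n])$ to show $G(Y)\in\mc{E}(A)$), and then closes the loop using the biduality $D^{op}\circ D \cong \operatorname{id}$ of the dualizing complex (citing \cite[Proposition 1.3]{YeZ1}) together with the equivalence between $F$ and $G$. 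You instead apply the projection formula on the $Y$-side, $D(Y)\otimes_A^L X \cong \RHom_A(Y, U\otimes_A^L X)$, dualize over $k$, and apply part (2) directly to $Y$, finishing with reflexivity $Y^{**}\cong Y$ of the graded dual on complexes with finite-dimensional cohomology. Your arrangement buys real economy: it avoids the inverse functor $G$ altogether, avoids the appeal to \cite{YeZ1}, and avoids the induction showing $G(Y)\in\mc{E}(A)$, at the trivial cost of vector-space biduality. What the paper's arrangement buys is downstream compatibility: Proposition~\ref{xxpro3.3} fixes the \emph{particular} isomorphisms $\alpha_{X,Y}$ constructed via the chain $\HA(X,Y)\to \HA(X, F(G(Y))) \to \HA(X,U)\otimes_A G(Y)$ and verifies, by an explicit sign-chasing of exactly those maps, that $\eta_{-1}(\Sigma) = (-1)^d \1_{F\Sigma}$ and $\eta_1(T)=\1_{FT}$. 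Since (per Section~\ref{xxsec2.3}) different choices of $\alpha_{X,Y}$ can alter $\eta_{\epsilon}(S)$ when the center of the category is nontrivial, if one adopted your $\alpha_{X,Y}$ the sign verification in Proposition~\ref{xxpro3.3} would have to be redone for your maps; it is not automatic that the same constants $s=(-1)^d$, $t=1$ fall out without repeating that (tedious) computation.
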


\begin{proof} (1)  Note that the indicated theorems from \cite{VdB1} do apply, 
since Van den Bergh's theory on local duality works
for locally finite noetherian algebras satisfying $\chi$ and having
finite cohomological dimension (see the comments in \cite[Lemma 3.5]{RRZ}).  
Hence the assertion holds. 

(2) This follows from part (1) and the fact that 
$\R\Gamma_{\fm_A}(Y)=Y$ \cite[Lemma 4.4]{VdB1}.

(3) Since $X$ is a perfect complex, we have
$$\begin{aligned}
\RHom_A(X,Y)&\cong \RHom_A(X, F(G(Y)))=
\RHom_A(X, U\otimes_A^L G(Y))\\
&\cong \RHom_A(X,U)\otimes_A^L G(Y)
=D(X)\otimes_A^L G(Y).
\end{aligned}
$$
Since $Y\in {\mathcal E}(A)$, it follows from induction on $\sum_{n} 
\dim_k H^0(Y[n])$ that $G(Y)\in {\mathcal E}(A)$. 
By
part (2), $D(G(Y))=G(Y)^*=\RHom_k(G(Y),k)$. 
By the above computation, adjointness, definition, and duality,
$$\begin{aligned}
\RHom_A(X,Y)^*&\cong 
\RHom_k(D(X)\otimes_A^L G(Y),k)\\
&\cong \RHom_{A^{op}}(D(X), \RHom_k(G(Y),k))\\
&\cong \RHom_{A^{op}}(D(X), D(G(Y)))\\
&\cong \RHom_{A}(G(Y),X)\cong \RHom_{A}(Y,F(X)).
\end{aligned}
$$
This is equivalent to 
$$\RHom_A(X,Y)\cong \RHom_{A}(Y,F(X))^*.$$
The assertion follows from taking $H^0$ of the above. 

(4) Since $F=U\otimes^L_A-$ is an autoequivalence of ${\mathcal D}_{\epsilon}(A)$, the assertion follows from
part (3).
\end{proof}

Suppose that $A$ is $\mb{N}$-graded generalized AS Gorenstein.
The shift of grading operation on a module $M \mapsto M(1)$, where 
$M(1)$ is defined by $M(1)_i = M_{i+1}$, gives an automorphism 
of the category $A \lGr$, which extends in an obvious way to complexes and thus gives an automorphism $T$ of $\mc{D}(A)$.  
Similarly, there is an automorphism of $A \lGr$ defined on objects by $M \mapsto {}^{\mu} M$, where $\mu = \mu_A$ is 
the Nakayama automorphism, and  as the identity 
on morphisms (using that $M$ and ${}^{\mu} M$ have the same underlying $k$-space).   It is easy to see that this functor 
may be identified with ${}^{\mu} A^1 \otimes_A -$.  This functor extends to complexes and thus gives an automorphism $\Phi$ of $\D(A)$, where in fact  $\Phi = {}^{\mu} A^1 \otimes^L_A -$.  Both $T$ and $\Phi$ restrict to $\mc{D}_{\epsilon}(A)$.
By the definition of $F$ and  Lemma~\ref{xxlem3.2}, 
$F := {}^{\mu} A ^1 (-\bfl)[d] \otimes^L_A -$ is the Serre 
functor of $\mc{D}_{\epsilon}(A)$, and clearly $F$ may also be written 
in the form $F = \Sigma^d \circ T^{-{\bfl}} \circ \Phi$.   It is also obvious 
that $\Sigma, T$, and $\Phi$ pairwise commute.   

We will see next that $(\mc{D}_{\epsilon}(A), \Sigma, T)$ is a $\Phi$-skew Calabi-Yau category 
in the sense of Definition~\ref{xxdef2.9}.  
 
\begin{proposition}
\label{xxpro3.3}
Let $A$ be $\mb{N}$-graded generalized AS Gorenstein and consider the 
bigraded category $(\mc{D}_{\epsilon}(A), \Sigma, T)$ 
defined above, where $\mc{D}_{\epsilon}(A)$ has Serre functor 
$F = \Sigma^d \circ T^{-\bfl} \circ \Phi$.
Fix the particular isomorphisms 
\[
\alpha_{X, Y}: \Hom_{\mc{D}_{\epsilon}(A)}(X,Y) \cong 
\Hom_{\mc{D}_{\epsilon}(A)}(Y,F(X))^*
\]
implementing the Serre duality which are determined in the course 
of the proof of {\rm{ Lemma {\ref{xxlem3.2}(3)}}}.  Then 
\begin{enumerate}
\item[{\rm{(a)}}] 
$\eta = \eta_{-1}(\Sigma) = (-1)^{d} \1_{F\Sigma}: 
F \Sigma \to \Sigma F$,  and 
\item[{\rm{(b)}}]  
$\theta = \eta_1(T) = \1_{FT}: FT \to TF$. 
\end{enumerate}
In particular, $(\mc{D}_{\epsilon}(A), \Sigma, T)$  satisfies 
Hypothesis~\ref{xxhyp2.5}, with 
$s = (-1)^d$ and $t = 1$, or equivalently 
$(\mc{D}_{\epsilon}(A), \Sigma, T)$ is a $\Phi$-skew Calabi-Yau triangulated category.  
\end{proposition}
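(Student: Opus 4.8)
The plan is to verify, using the explicit Serre-duality isomorphisms $\alpha_{X,Y}$ constructed in the proof of Lemma~\ref{xxlem3.2}(3), the defining trace formula \eqref{E2.2.2} for each of the claimed natural isomorphisms. Since the natural transformation $\eta_{\epsilon}(S)$ is uniquely pinned down by \eqref{E2.2.2} once $F$ and the $\alpha_{X,Y}$ are fixed, it suffices to check that $\eta_X = (-1)^d\,\id_X$ satisfies \eqref{E2.2.2} with $S=\Sigma$, $\epsilon=-1$, and that $\theta_X = \id_X$ satisfies it with $S=T$, $\epsilon=1$. Thus for (a) I would show $\Tr_X(\Sigma^{-1}((-1)^d f)) = -\,\Tr_{\Sigma X}(f)$ for $f\colon \Sigma X \to F\Sigma X$, and for (b) that $\Tr_X(T^{-1}f) = \Tr_{TX}(f)$ for $f\colon TX \to FTX$. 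Because each trace is $\Tr_X = \alpha_{X,X}(\id_X)$, everything reduces to tracking how the chain of isomorphisms in Lemma~\ref{xxlem3.2}(3) behaves when $X$ is replaced by $\Sigma X$ or by $TX$.

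Part (b) is the soft one, and I would dispatch it first. The functor $T$ is the internal grading shift $M\mapsto M(1)$, which affects neither homological degree nor differentials; it commutes strictly with $F$ and with each of the derived functors $\RHom_A$, $\RHom_k$, $\otimes^L_A$, and the graded $k$-dual used to assemble $\alpha_{X,Y}$, introducing no Koszul signs. Consequently the trace is $T$-equivariant, so \eqref{E2.2.2} holds with $\theta = \1_{FT}$ and $\epsilon=1$, giving $t=1$.

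Part (a) is where the sign $(-1)^d$ is produced, and I expect this to be the main obstacle. The source is the homological shift built into the rigid dualizing complex: writing $U = V[d]$ with $V = {}^{\mu}A^1(-\bfl)$ concentrated in homological degree $0$, we have $F = \Sigma^d\circ F_0$ with $F_0 = V\otimes^L_A - = T^{-\bfl}\Phi$. The functor $F_0$, being tensoring by a complex in the single homological degree $0$, commutes with $\Sigma$ via the trivial commutation; on the other hand, the exact structure on $\Sigma^d$ is that of $(\Sigma^{-})^d$ from Remark~\ref{xxrem2.1}, and iterating $\eta = -\1$ through the formula \eqref{E2.0.1} yields the commutation $(-1)^d\1_{\Sigma^{d+1}}$. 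Since $\eta_{-1}(\Sigma)$ is precisely the commutation making $F$ exact (Remark~\ref{xxrem2.3}(1)) and the canonical tensor-product exact structure on $U\otimes^L_A-$ carries exactly this sign, one concludes $\eta_{-1}(\Sigma) = (-1)^d\1_{F\Sigma}$, i.e.\ $s=(-1)^d$. The delicate point, which I would confirm through the trace formula rather than leave to a "canonical structure" argument, is that commuting $\Sigma$ through the $d$-fold shift $[d]$ in $U$, together with the shift-reversal induced by the graded dual $(-)^*$ and by the duality functor $D = \RHom_A(-,U)$, contributes the single net sign $(-1)^d$ and nothing more. As a consistency check, specializing to the Calabi-Yau case $\Phi=\id$, $\bfl=0$ recovers $\eta_{-1}(\Sigma) = (-1)^d\1$ and the graded-symmetric Nakayama automorphism of the classical references.

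Finally, having established $s=(-1)^d$ and $t=1$, the remaining clauses of Hypothesis~\ref{xxhyp2.5} having been arranged in the setup preceding the Proposition, clause (5) now holds, so $(\mc{D}_\epsilon(A),\Sigma,T)$ satisfies Hypothesis~\ref{xxhyp2.5}. The two displayed identities of Definition~\ref{xxdef2.9}(1) are then exactly the statements $\eta_{-1}(\Sigma)=(-1)^d\1_{F\Sigma}$ and $\eta_1(T)=\1_{FT}$, so $(\mc{D}_\epsilon(A),\Sigma,T)$ is $\Phi$-skew Calabi-Yau, as claimed.
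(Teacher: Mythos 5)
Your reduction is sound as far as it goes: since the trace identities \eqref{E2.2.2} uniquely determine $\eta_{\epsilon}(S)$ once $F$ and the $\alpha_{X,Y}$ are fixed, it does suffice to verify those identities for the candidate transformations $(-1)^d\1_{F\Sigma}$ and $\1_{FT}$, and your treatment of part (b) — no Koszul signs arise anywhere in the chain of isomorphisms when $X$ is replaced by $TX$ — is at the same level of detail as the paper's own one-sentence dismissal of that case. The problem is part (a). The argument you actually give is the ``canonical structure'' one: $F_0 = V\otimes^L_A -$ commutes trivially with $\Sigma$, the exact structure on $\Sigma^d$ is $(\Sigma^-)^d$ carrying the sign $(-1)^d$, and since $\eta_{-1}(\Sigma)$ is \emph{the} commutation making $F$ exact, it must equal $(-1)^d\1_{F\Sigma}$. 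This inference is precisely what the paper's Remark~\ref{xxrem3.4} warns is invalid: Van den Bergh's theorem guarantees that $\eta_{-1}(\Sigma)$ makes $F$ exact, but there is no uniqueness statement — many natural transformations could make $F$ exact — and moreover $\eta_{-1}(\Sigma)$ depends on the particular choice of the $\alpha_{X,Y}$ (by \eqref{E2.4.1}, changing the $\alpha$'s by a unit of the center conjugates $\eta$), which is exactly why the Proposition fixes the isomorphisms from Lemma~\ref{xxlem3.2}(3). So $(-1)^d\1_{F\Sigma}=\eta_{-1}(\Sigma)$ is ``not obviously forced,'' in the paper's words.

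You do flag this yourself — ``the delicate point, which I would confirm through the trace formula rather than leave to a `canonical structure' argument'' — but the proposal never carries out that confirmation, and that confirmation \emph{is} the proof. Carrying it out means fixing sign conventions for all the standard commutations (shift versus $\Hom$, shift versus $\otimes$, the dual $(-)^*$, the duality $D=\RHom_A(-,U)$) and chasing them through the five-step composite $\alpha_{X,Y}$ of Lemma~\ref{xxlem3.2}(3): the adjunctions, the identification $\Hk(G(Y),k)\cong D(G(Y))$ via the maps $\lambda_{3,i}$, and the duality isomorphism. In the paper this occupies the bulk of the proof: a diagram of five squares in which squares II--V commute while square I \emph{anti}commutes, the latter established by an element-level computation (the subsquare I$_3$, where the Koszul signs $\omega_1$ and $\omega_2$ differ by exactly $1$ mod $2$). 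The net outcome there is $\eta_{-1}(\Sigma)=\Lambda_F$, the commutation coming from the sign convention \eqref{E3.3.3}, which equals $(-1)^d\1_{F\Sigma}$ only because $U$ is concentrated in a single cohomological degree. Your consistency check against the Calabi-Yau case cannot substitute for this: it confirms the answer is plausible, not that these particular $\alpha_{X,Y}$ produce it. As written, the proposal establishes the correct statement to be verified but not the verification itself, so the key step of the Proposition remains unproved.
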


In order to prove Proposition~\ref{xxpro3.3}, we found no alternative to a direct verification using the precise form of the isomorphisms  $\alpha_{X, Y}$, and since these are defined by composing quite a few maps, 
the verification is rather long and technical.  The reader willing to take the proof on faith may 
wish to skip ahead to Remark~\ref{xxrem3.4} at this point.

Since the proof below analyzes  elements and morphisms 
concerning some Hom-sets, it is useful to recall some basic 
notation to be used in the proof. Recall that $A \lGr$ is the category 
of graded left $A$-modules with morphisms being the graded $A$-module
homomorphisms of degree 0. For any graded left $A$-modules
$M$ and $N$, let $\Hom_A(M,N)$ denote the graded space
$\bigoplus_{i\in {\mathbb Z}} \Hom_{A\lGr}(M,T^i N)$.  Thus 
the degree 0 component of $\Hom_A(M,N)$ is
$\Hom_A(M,N)_0=\Hom_{A\lGr}(M,N)$. If $M$
is finitely generated, then $\Hom_A(M,N)$, when forgetting
the grading, agrees with the ungraded Hom set of all $A$-module 
homomorphisms from $M$ to $N$. Let $X$ and $Y$ be two  
complexes of graded left $A$-modules. Let $\Hom_A(X,Y)$
be the complex of graded $k$-modules, defined by 
$\Hom_A(X,Y)^i=\prod_{k\in {\mathbb Z}} \Hom_A(X^k, Y^{i+k})$, 
with its standard differential. If $X$ is a perfect complex or if 
$Y$ is $K$-injective (or semi-injective), then
$$H^0 \Hom_A(X,Y)_0= \Hom_{\mc{D}(A)}(X,Y).$$
When $X$ and $Y$ are objects in a full triangulated subcategory $\mc{B}$ 
of $\mc{D}(A)$, then $\Hom_{\mc{B}}(X,Y)=\Hom_{\mc{D}(A)}(X,Y)$.
In the next proof we work with morphisms at the level 
of complexes (or differential graded modules).   Then by taking 
$H^0(-)_0$, we will then have morphisms at the level of derived
categories. 

\begin{proof}[Proof of Proposition \ref{xxpro3.3}]
For any perfect complexes $X, Y \in \mc{D}_{\epsilon}(A)$, let
$$D(X):=\Hom_A(X,U), \quad 
F(X):=U\otimes_A X, \quad {\text{and}}\quad
G(Y):=V\otimes_A Y,$$ 
where $U={^{\mu} A^1}(-\bfl)[d]$ and $V=U^{-1}={^1 A^\mu}(\bfl)[-d]$.
All of these are perfect
complexes over $A$ or over $A^{op}$. Note that since $U$ and $V$ are perfect, 
$U\otimes^L_A -$ and $V\otimes^L_A-$ can be computed by $U\otimes_A -$
and $V\otimes_A -$.  Consider the 
following diagram, where $\HA$ (respectively, $\Hk$) is an abbreviation for 
$\Hom_A$ (respectively, $\Hom_k$).
\[
\xymatrix{ M_1 = \HA(X, Y)^* \ar[r]^-{\alpha_1} \ar[d]^-{\psi_1} 
\ar@{}[rd] |{\text{I}} & M_2 = 
\Hk(D(X)\otimes_A G(Y),k) \ar[r]^-{\alpha_2} 
\ar[d]^-{\psi_2} \ar@{}[rd] |{\text{II}} &  \dots \\
M'_1 = \HA(\Sigma X, \Sigma Y)^* \ar[r]_-{\alpha'_1}  & M'_2 
= \Hk(D(\Sigma X)\otimes_A G(\Sigma Y),k) \ar[r]_-{\alpha'_2}  & \dots
}
\]
\[
\xymatrix{ 
\dots M_3 = \HAo(D(X), \Hk(G(Y),k)
) \ar[r]^-{\alpha_3} \ar[d]^-{\psi_3} \ar@{}[rd] |{\text{III}}& M_4 
=  \HAo(D(X), D(G(Y))) \ar[d]^-{\psi_4}   \dots \\
\dots M'_3 = \HAo(D( \Sigma X), \Hk(G(\Sigma Y),k)) 
\ar[r]^-{\alpha'_3} & M_4' = \HAo(D(\Sigma X), D(G( \Sigma Y))) \dots
}
\]
\[
\xymatrix{ \dots \ar[r]^-{\alpha_4} \ar@{}[rd] |{\text{IV}} & M_5 
= \HA(G(Y),X) \ar[r]^-{\alpha_5} \ar[d]^-{\psi_5} 
\ar@{}[rd] |{\text{V}} &  M_6 = \HA(Y,F(X)) \ar[d]^-{\psi_6} \\
\dots \ar[r]^-{\alpha'_4} & M'_5 = \HA(G(\Sigma Y),\Sigma X) 
\ar[r]^-{\alpha'_5}  &  M'_6 = \HA(\Sigma Y,F( \Sigma X)).
}
\]
Here, the upper horizontal maps $\alpha_i$ exist as isomorphisms
in the derived categories, coming from the proof of 
Lemma~\ref{xxlem3.2}(3), and  the lower horizontal maps $\alpha'_i$ 
are those isomorphisms applied to 
the objects $\Sigma X$ and $\Sigma Y$; the vertical maps will be 
defined later in the proof.
By taking $0$-th cohomology and the $0$-th graded piece, we obtain a diagram that involves the Hom
sets in the derived category and isomorphisms denoted formally by  
$H^0(\alpha_i)_0$. 

We now describe the $\alpha_i$ more explicitly.
The map $\alpha_1$ is the inverse of the $k$-linear dual of the map 
$\lambda_1$, where $\lambda_1$ is the composition of the following
isomorphisms 
$$\HA(X,Y)\to H_A(X, F(G(Y)))\to H_A(X, U\otimes_A G(Y))\to 
\HA(X,U)\otimes_A G(Y).$$
The map $\alpha_2$ is just the $\Hom-\otimes$ adjoint. 
The map 
$$\alpha_3:\HAo(D(X), \Hk(G(Y),k))
\to \HAo(D(X), D(G(Y))),$$ 
which exists at the derived level, is 
induced by the isomorphism $\lambda_3: \Hk(G(Y),k)\to D(G(Y))$
in the derived category provided by Lemma~\ref{xxlem3.2}(2) since $G(Y)$ 
is in $\mc{D}_{\epsilon}(A)$.  Note that $\Hk(G(Y),k)\cong \HA(G(Y), A^*)$ 
by adjointness. Next we define a zig-zag of maps that 
become isomorphisms after we take the 
$0$-th cohomology. 
Let $I(U)$ be a fixed $A$-bimodule injective resolution 
of $U$ and let $I(U)^{\leq 0}$ be the truncation.
Then we have quasi-isomorphisms 
$$U\xrightarrow{\lambda_{3,1}} I(U) 
\xleftarrow{\lambda_{3,2}} I(U)^{\leq 0}.$$
We also have 
$$I(U)^{\leq 0}\xleftarrow{\lambda_{3,3}} \Gamma_{\fm}(I(U)^{\leq 0})
\xrightarrow{\lambda_{3,4}}  H^0(\Gamma_{\fm}(I(U)^{\leq 0}))\cong 
A^*,$$
where $\lambda_{3,3}$ is the natural embedding (but not a quasi-isomorphism)
and $\lambda_{3,4}$ is a quasi-isomorphism (following from the
AS Gorenstein property). Since $G(Y)$ is in $\mc{D}_{\epsilon}(A)$, 
$$H_A(G(Y),\lambda_{3,3}): H_A(G(Y), \Gamma_{\fm}(I(U)^{\leq 0}))
\to H_A(G(Y), I(U)^{\leq 0})$$ 
is a quasi-isomorphism. Therefore each $\HA(G(Y),
\lambda_{3,i})$ is a quasi-isomorphism for $i=1,2,3,4$. 
Note that $\lambda_3$ is a composition of the maps $\HA(G(Y),\lambda_{3,i})$
(or their inverses), which becomes a well-defined 
isomorphism in the derived category. Since
$\alpha_3$ is the composition
$$\begin{aligned}
\HAo(D(X),\Hk(G(Y),k))&\to 
\HAo(D(X), \HA(G(Y), A^*))\\
&\xrightarrow{\HAo(D(X), \lambda_3)^{-1}}
\HAo(D(X), \HA(G(Y), U))\\
&\qquad\qquad\qquad\qquad =\HAo(D(X),D(G(Y))),
\end{aligned}
$$
it exists and is an isomorphism at the derived level. 
Finally, $\alpha_4$ is the isomorphism given by the duality $D$, and $\alpha_5$ is another adjoint isomorphism.

For any $X$ in $\D_{\epsilon}(A)$, let $s: X\to \Sigma X$ be the
standard shift map of degree 1, which is the identity on elements
of $X$. This is denoted by $\sigma$ in the notes 
\cite[Section 1.13]{AFH}. For any element $x\in X$, let $s(x)$ be
the corresponding element in $\Sigma X$. 

Now we fix some standard isomorphisms for any perfect 
complexes $X$ and $Y$ (over $A$ or over $A^{op}$), which come 
with important sign conventions.  First, we have an isomorphism
\begin{equation}
\label{E3.3.1}\tag{E3.3.1}
\Sigma: \Hom_A(X, Y) \to \Hom_A(\Sigma X, \Sigma Y), \ \ \ 
f \mapsto (-1)^{|f|} s \circ f \circ s^{-1}.
\end{equation}
Here, we think of $f$ as some homogeneous element of degree 
$j$ in $\Hom_A(X, Y)^j = \prod_i \Hom(X^i, Y^{i+j})$.
The sign, and all signs below, come from the Koszul sign convention.

Similarly, we fix the standard isomorphisms, where $\otimes$ could be
either $\otimes_A$ or $\otimes_k$ and where $\Hom$ could be either
$\Hom_A$, or $\Hom_{A^{op}}$, or $\Hom_k$, 
\begin{gather}
t_1: (\Sigma X \otimes Y) \to \Sigma(X \otimes Y), \ \ \ 
(sx \otimes y) \mapsto  s(x \otimes y), \label{E3.3.2}\tag{E3.3.2}\\
t_2: (X \otimes \Sigma Y) \to \Sigma (X \otimes Y), \ \ \  
(x \otimes sy) \mapsto (-1)^{|x|} s(x \otimes y), \label{E3.3.3}\tag{E3.3.3}\\
(\Sigma^{-1} \otimes \Sigma): ( X \otimes Y) \to 
( \Sigma^{-1} X \otimes \Sigma Y), \ \ \ (x \otimes y) 
\mapsto  (-1)^{|x|}  (s^{-1}x \otimes sy), \label{E3.3.4}\tag{E3.3.4}\\
h_1:  \Hom( \Sigma X, Y) \to \Sigma^{-1} \Hom(X, Y), \ \ \  
f \mapsto (-1)^{|f|} s^{-1}(f \circ s),  
\ \ \text{and}\label{E3.3.5}\tag{E3.3.5}\\
h_2:  \Hom(X, \Sigma Y) \to \Sigma \Hom(X, Y), \ \ \ 
f \mapsto    s (  s^{-1} \circ f),\label{E3.3.6}\tag{E3.3.6}
\end{gather}
all of which are of course natural in each coordinate.   
Since $D = \Hom_A( -, U)$, we get an induced natural isomorphism of functors 
\[
\Omega: \Sigma^{-1} D \to D \Sigma
\]
coming from $h_1^{-1}$.   
Since $G = V \otimes_A  -$ and $F = U \otimes_A -$,  we have 
fixed natural isomorphisms of functors 
\[
\Lambda_G: G \Sigma \to \Sigma G, \ \ \   \Lambda_F: F \Sigma \to \Sigma F
\]
coming from $t_2$.

Now all vertical maps in the diagram above are defined by taking 
the obvious map using a combination of the fixed isomorphisms 
above.  Most importantly, we have $\psi_1 = (\Sigma^*)^{-1}$, 
and 
$\psi_6 = (\Lambda_F)_X^{-1} \circ \Sigma(-)$.  As another 
example, $\psi_2$ is the map 
induced by the isomorphism $DX \otimes^L GY \to D \Sigma X 
\otimes^L G(\Sigma Y)$ given by    $(\Omega_X \otimes \Lambda_G^{-1})   
\circ (\Sigma^{-1} \otimes \Sigma)$.   We leave the similar 
obvious definitions of $\psi_3, \psi_4, \psi_5$ to the reader.

The main technical step of the proof is to check that with the 
definitions of the maps given above, 
then \emph{square I anticommutes {\rm{(}}or $(-1)$-commutes{\rm{)}}, 
while squares II-V all commute.}  
Suppose for the moment that 
this has been verified.   Then taking $H^0$ and dualizing 
everything, we will have shown that 
$\beta_{X, Y} \circ (\Sigma^*)^{-1} \circ \alpha_{X, Y} 
= - (\alpha_{\Sigma X, \Sigma Y} \circ \Sigma)$, 
where $\beta_{X,Y}$ is the dual of the map 
$\Hom_{\mc{D}_{\epsilon}(A)}(\Sigma Y, \Sigma F X) \to 
\Hom_{\mc{D}_{\epsilon}(A)}(\Sigma Y, F \Sigma X)$
given by $f \mapsto (\Lambda_F)_X \circ f$.  In other words, 
this shows that $\eta_{-1}(\Sigma) = \Lambda_F$. 
However, because $U$ is a complex concentrated entirely in 
degree $d$, the sign in the isomorphism $t_2$ 
from which $\Lambda_F$ is derived implies that $\Lambda_F$ 
is equal to $(-1)^d \1_{\Sigma F}$, where 
$\1_{\Sigma F}: \Sigma F \to F \Sigma$ is the trivial 
commutation.  This will verify the result for 
$\eta_{-1}(\Sigma)$.  

The verification that square I anticommutes, while 
each square II-V commutes, is tedious, and we do not give 
all of the details, but we briefly sketch some parts.  
Square II is easily seen to commute using 
the functoriality of the adjoint isomorphism.    The commutation 
of square IV is mostly a matter of using 
that the duality $D$ is a functor.  Square V commutes 
again since $F$ and $G$ are adjoint (in fact inverse equivalences), 
and the natural isomorphisms $\Lambda_F, \Lambda_G$ used to 
commute these with $\Sigma$ are defined in a way compatible 
with this adjointness.  

The more difficult squares are I and III.  
If necessary, each square can be analyzed by carefully 
writing out the definitions of the maps and applying them 
to a test element, and this is what we resorted to in order 
to verify that square I anticommutes.  
The commutation of square III depends on the maps $\lambda_{3,i}$
introduced earlier.

For those readers who would like to see more details, we now give a 
full proof of the anticommutativity of square I.  We leave the more detailed verification of
 the commutativity of square III (and the other squares) to the reader.
Note that the $k$-linear dual of square I is the composition
of the following three squares
\[
\xymatrix{  \HA(X, Y)\ar[r]^-{\lambda_{1,1}} \ar[d]^-{\Sigma} 
\ar@{}[rd] |{\text{I$_1$}}
& \HA(X, FG(Y)) \ar[d]^-{w_2}  \\
\HA(\Sigma X, \Sigma Y) \ar[r]_-{\lambda'_{1,1}}  & 
\HA(\Sigma X, FG(\Sigma Y)) ,
}
\]
\[
\xymatrix{  \HA(X, FG(Y))\ar[r]^-{\lambda_{1,2}} \ar[d]^-{w_2} 
\ar@{}[rd] |{\text{I$_2$}}
& \HA(X, U\otimes_A G(Y)) \ar[d]^-{w_3}  \\
\HA(\Sigma X, FG(\Sigma Y)) \ar[r]_-{\lambda'_{1,2}}  & 
\HA(\Sigma X, U\otimes_A G(\Sigma Y)), 
}
\]
and
\[
\xymatrix{  \HA(X, U\otimes_A G(Y))\ar[r]^-{\lambda_{1,3}} \ar[d]^-{w_3} 
\ar@{}[rd] |{\text{I$_3$}}
& \HA(X, U)\otimes_A G(Y) \ar[d]^-{w_4}  \\
\HA(\Sigma X, U\otimes_A G(\Sigma Y)) \ar[r]_-{\lambda'_{1,3}}  & 
\HA(\Sigma X, U)\otimes_A G(\Sigma Y) .
}
\]
The map $\Sigma$ is given in \eqref{E3.3.1}.  The map 
$\lambda_{1,1}$ sends $f\in \HA(X,Y)$ to $\eta_Y\circ f$, where $\eta_Y:
Y\to FG(Y)$ is the natural isomorphism sending $y\to u\otimes v\otimes y$, 
where $u$ and $v$ are the (nonzero) generators of $U$ and $V$.  The map 
$w_2$ is the the composition of $\Sigma$ with $\HA(\Sigma X, \delta_Y)$,
where $\delta_Y: \Sigma FG(Y)\to FG(\Sigma Y)$ sends 
$s(u\otimes v\otimes y)\to u\otimes v\otimes s(y)$. Note that  
$\delta_Y$ is a composition of two different $t_2$'s. For any
$f\in \HA(X,Y)$ and $sx\in \Sigma X$, we have
$$\begin{aligned}
\;[(w_2\circ \lambda_{11})(f)](sx)&=w_2 (\eta_Y\circ f)(sx)\\
&=\HA(\Sigma X, \delta_Y) [(-1)^{|f|} s\circ (\eta_Y\circ f) \circ s^{-1}
(sx)]\\
&=\HA(\Sigma X, \delta_Y) [(-1)^{|f|} s\circ (\eta_Y\circ f)(x)]\\
&=\HA(\Sigma X, \delta_Y) [(-1)^{|f|} s\circ (u\otimes v\otimes f(x))]\\
&=(-1)^{|f|} u\otimes v\otimes sf(x), \qquad\qquad \qquad\qquad 
{\text{and}}\\
\; [(\lambda'_{11}\circ \Sigma)(f)](sx)&=
[\lambda'_{11}((-1)^{|f|} s\circ f \circ s^{-1}](sx)\\
&=(-1)^{|f|} \eta_{\Sigma Y}(s f(x))\\
&=(-1)^{|f|} u\otimes v\otimes sf(x). 
\end{aligned}
$$
So $w_2\circ \lambda_{11}=\lambda'_{11}\circ \Sigma$ and square I$_1$
is commutative. By definition, $F(GY)=U\otimes_A G(Y)$ (so 
$\lambda_{1,2}$ is the identity) and the map 
$w_3$ (like $w_2$) is the composition of $\Sigma$ with 
$\HA(\Sigma X, \eta_Y)$, where $\eta_Y$ is viewed as a natural 
isomorphism from $\Sigma (U\otimes G(Y))\to U\otimes G(\Sigma)$.
It is clear that square I$_2$ is commutative. Finally we show that
square I$_3$ is anticommutative. The map $\lambda_{1,3}^{-1}: 
\HA(X,U)\otimes_A G(Y)\to \HA(X,U\otimes_A G(Y))$ is defined
as follows: for any homogeneous element $f\otimes (v\otimes y)\in 
\HA(X,U)\otimes_A G(Y)$, $\lambda_{1,3}^{-1}(f\otimes (v\otimes y))$
in $\HA(X,U\otimes_A G(Y))$ is denoted by $\{f,v,y\}$ and for 
any homogeneous element $x\in X$,
$$\{f,v,y\} (x)=(-1)^{|x|(d+|y|)} f(x) \otimes (v\otimes y).$$
(Note that $d=|v|$.) 
So for $g\otimes (v\otimes s y)\in \HA(\Sigma X, U)\otimes_A G(\Sigma Y)$ 
and $sx\in \Sigma X$, we have
$$(\lambda'_{1,3})^{-1}(g \otimes (v\otimes sy)) (sx)=
(-1)^{(|x|-1)(d+|y|-1)} g(sx)\otimes (v\otimes sy).$$
 We have already seen the definition of $w_3$.
The map $w_4$ is the composition of $\Sigma^{-1}\otimes \Sigma$
with $h_1^{-1}\otimes t_2^{-1}$, so it sends 
$$f\otimes v\otimes y\mapsto (-1)^{d} (f\circ s^{-1})
\otimes v\otimes sy.$$
Then
$$\begin{aligned}
\; [ (\lambda_{1,3}')^{-1}\circ w_4(f\otimes v\otimes y)](sx)
&= [(\lambda_{1,3}')^{-1}((-1)^{d} (f\circ s^{-1})\otimes v\otimes sy)](sx)\\
&=(-1)^{d+(|x|-1)(d+|y|-1)} (f\circ s^{-1})(sx)\otimes v\otimes sy\\
&=(-1)^{\omega_1} f(x)\otimes v\otimes sy
\end{aligned}
$$
where
$$\omega_1\equiv d+(|x|-1)(d+|y|-1)\equiv |x|(|d|+|y|)+|x|+|y|+1
\quad \mod 2.$$
On the other hand,
$$\begin{aligned}
\; [w_3\circ (\lambda_{1,3})^{-1}(f\otimes v\otimes y)] (sx)
&= [w_3 (\{f,v,y\})](sx)\\
&= \HA(\Sigma X,\delta_Y)[(-1)^{|f|+d+|y|} s \circ \{f,v,y\} 
\circ s^{-1}](sx)\\
&= (-1)^{|f|+d+|y|}\HA(\Sigma X,\delta_Y)(s \circ \{f,v,y\}(x))\\
&= (-1)^{|f|+d+|y|+|x|(d+|y|)}
\HA(\Sigma X,\delta_Y)(s(f(x)\otimes v\otimes y))\\
&= (-1)^{\omega_2}f(x)\otimes v\otimes sy
\end{aligned}
$$
where $\omega_2=|f|+d+|y|+|x|(d+|y|)$. Since 
$|f|=|f(x)|-|x|=|u|-|x|=-d-|x|$, we have
$$\omega_2=-|x|+|y|+|x|(d+|y|)=\omega_1-1 \mod 2.$$
Therefore $(\lambda'_{1,3})^{-1}\circ w_4=-w_3\circ (\lambda_{1,3})^{-1}$
and square I$_3$ is anticommutative. Combining squares I$_1$, I$_2$
and I$_3$ and then taking the $k$-linear dual, one sees that square I 
is anticommutative. Therefore we verify square I.

A similar proof, but much easier, will show that 
$\eta_1(T) = \1_{ TF}$.  In the corresponding diagram, there are no 
signs to worry about.  Each square will easily be seen 
to commute.  Thus we omit this simpler part of the proof.   
Note that it is immediate from the definitions that $T$ 
and $\Phi$ are exact 
functors of $\mc{D}_{\epsilon}(A)$ using the trivial 
commutations.  Thus Hypothesis~\ref{xxhyp2.5} is 
verified, with values $s = (-1)^d$ and $t = 1$.  So  $(\mc{D}_{\epsilon}(A), \Sigma, T)$ 
is a $\Phi$-skew Calabi-Yau triangulated category by definition.
\end{proof}

\begin{remark}
\label{xxrem3.4}
The values of $s$ and $t$ we calculated in the preceding 
result are of course the natural and expected ones.  
As we saw in Section~\ref{xxsec2}, $-\1_{\Sigma^2}: 
\Sigma^2 \to \Sigma^2$ makes $\Sigma$ into an exact 
functor of $(\mc{D}(A), \Sigma)$, whereas the natural 
commutations with $\Sigma$ make $T$ and $\Phi$ into 
exact functors.  Since $F = \Sigma^d \circ T^{-\bfl} \circ \Phi$, 
then $(-1)^d \1_{F \Sigma}: F \Sigma \to \Sigma F$ makes $F$ exact.

However, although Van den Bergh proves in \cite[Theorem A.4.4]{Bo} 
that the natural transformation $\eta_{-1}(\Sigma)$ always makes $F$ into an exact 
functor, there is no apparent reason why there should 
be a unique natural transformation that does so in 
general.  Thus $(-1)^d \1_{F \Sigma} = \eta_{-1}(\Sigma)$ is not obviously forced.

In addition, as we discussed in Section~\ref{xxsec2.3}, 
the exact form of $\eta_{-1}(\Sigma)$ may depend on the choice of 
the isomorphisms $\alpha_{X, Y}$ in the Serre duality, if the category has a large center.  
The proof of  Proposition~\ref{xxpro3.3} suggests that 
the choices of these isomorphisms $\alpha_{X, Y}$ exhibited in 
Lemma~\ref{xxlem3.2}(3) are particularly natural ones, since they lead to 
the expected $\eta_{-1}(\Sigma)$.
\end{remark}

In the remaining results in this section, we continue to maintain the notation introduced before 
Proposition~\ref{xxpro3.3}.
We now prove the main result of this section.
\begin{theorem}
\label{xxthm3.5}
Let $A$ be a generalized AS Gorenstein algebra.
Then $(\mc{D}^{pl}_{\epsilon}(A), \Sigma, T)$ is a skew 
Calabi-Yau triangulated category. 
\end{theorem}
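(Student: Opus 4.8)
The plan is to obtain this statement by combining the two preceding results, so that almost no new work is required. First I would invoke Proposition~\ref{xxpro3.3}, which shows that $(\mc{D}_{\epsilon}(A), \Sigma, T)$, equipped with the Serre functor $F = \Sigma^d \circ T^{-\bfl} \circ \Phi$ together with the natural isomorphisms $\eta_{-1}(\Sigma) = (-1)^d \1_{F\Sigma}$ and $\eta_1(T) = \1_{FT}$, is a $\Phi$-skew Calabi-Yau triangulated category in the sense of Definition~\ref{xxdef2.9}(1). Note that Proposition~\ref{xxpro3.3} applies verbatim, since by Definition~\ref{xxdef3.1} a generalized AS Gorenstein algebra is exactly a locally finite algebra that is $\mb{N}$-graded with respect to the total grading.

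Next I would apply Lemma~\ref{xxlem2.10} with $\C = \mc{D}_{\epsilon}(A)$. This lemma guarantees that $\Sigma$, $T$, and $\Phi$ all restrict to the thick subcategory $\C^{pl}$ generated by the $\Phi$-plain objects, that $F$ restricts to an autoequivalence there, that the conditions of Definition~\ref{xxdef2.9}(1) continue to hold for the restriction, and that $\Phi$ becomes plain on $\C^{pl}$. Consequently $(\C^{pl}, \Sigma, T)$ is a genuine skew Calabi-Yau category in the sense of Definition~\ref{xxdef2.9}(2), with Nakayama functor $\Phi$.

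Finally I would observe that $\C^{pl}$ is precisely $\mc{D}^{pl}_{\epsilon}(A)$: by Definition~\ref{xxdef2.8} and the description of $\mc{D}^{pl}_{\epsilon}(A)$ given in the introduction, both are the thick subcategory of $\mc{D}_{\epsilon}(A)$ generated by the class $\Xi(\Phi)$ of $\Phi$-plain complexes. This identification completes the argument. The main obstacle to the theorem has in fact already been overcome upstream: the substantive and technically delicate content is the sign verification in Proposition~\ref{xxpro3.3} establishing $s = (-1)^d$ and $t = 1$ (the anticommutativity of square I and the commutativity of the remaining squares), together with the check in Lemma~\ref{xxlem2.10} that the relevant functors restrict to the plain subcategory. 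Granting these, Theorem~\ref{xxthm3.5} is a purely formal consequence.
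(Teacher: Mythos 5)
Your proposal is correct and follows exactly the paper's own argument: the paper's proof is precisely the two-step deduction of invoking Proposition~\ref{xxpro3.3} to see that $(\mc{D}_{\epsilon}(A), \Sigma, T)$ is $\Phi$-skew Calabi-Yau and then applying Lemma~\ref{xxlem2.10} to pass to the thick subcategory generated by the $\Phi$-plain objects. Your additional remarks --- that $\mc{D}^{pl}_{\epsilon}(A)$ is by definition the category $\C^{pl}$ of Lemma~\ref{xxlem2.10}, and that the $\mb{N}$-graded hypothesis of Proposition~\ref{xxpro3.3} is automatic from Definition~\ref{xxdef3.1} --- are accurate and only make explicit what the paper leaves implicit.
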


\begin{proof} To avoid triviality, we assume that 
$\mc{D}^{pl}_{\epsilon}(A)$ is not zero. 
By Proposition \ref{xxpro3.3}, $(\mc{D}_{\epsilon}(A), \Sigma, T)$ is a $\Phi$-skew 
Calabi-Yau triangulated category. The assertion follows
from Lemma \ref{xxlem2.10}.
\end{proof}

Next, we see that for the skew Calabi-Yau categories which arise in this section, the 
dimension, AS index, and Nakayama functor are uniquely determined.
\begin{lemma}
\label{xxlem3.6}
Let $A$ be an $\mb{N}$-graded generalized AS Gorenstein algebra, of 
dimension $d$ and of AS index $\bfl$, with Nakayama 
automorphism $\mu = \mu_A$.   
\begin{enumerate}
\item
Let $G=\Sigma^a \circ T^b \circ \Phi$ for some $a, b \in \mb{Z}$.  If $X$ is a nonzero object in
$\D_{\epsilon}(A)$ and $G(X)\cong X$, then $a=b=0$.
\item
If we require the existence of a nonzero $\Phi$-plain 
object in $\D_{\epsilon}(A)$, 
then the decomposition \eqref{E2.9.1} is unique, namely, 
$(d, \bfl, \Phi)$ is uniquely determined by \eqref{E2.9.1}.
\end{enumerate}
\end{lemma}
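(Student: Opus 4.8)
The plan is to prove part (1) by tracking two extremal numerical invariants of the cohomology of an object, and then to deduce part (2) from part (1) together with the uniqueness of the Serre functor. Throughout, the key facts are how the three commuting functors act on cohomology. For a nonzero object $X \in \D_{\epsilon}(A)$, which is a bounded complex whose cohomology modules $H^i(X)$ are finite-dimensional and graded (and with $H^i(X) \neq 0$ for at least one $i$), the translation satisfies $H^i(\Sigma^a X) = H^{i+a}(X)$; the grading shift satisfies $H^i(T^b X)_j = H^i(X)_{j+b}$ while leaving cohomological degrees unchanged; and $\Phi = {}^{\mu}A^1 \otimes_A^L -$ is exact, since ${}^{\mu}A^1$ is free as a right module, so $H^i(\Phi X) = {}^{\mu}(H^i(X))$. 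Because $\mu$ is a graded automorphism, the latter has the same graded dimensions as $H^i(X)$, so $\Phi$ alters neither cohomological nor internal degrees.

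For part (1), the idea is to use two invariants that, unlike an Euler characteristic, cannot vanish on a nonzero object: the top cohomological degree $i_{\max}(X) = \max\{ i : H^i(X) \neq 0 \}$ and, for the highest nonzero cohomology, the top internal degree $j_{\max}(X) = \max\{ j : H^{i_{\max}(X)}(X)_j \neq 0 \}$. First I would note that, by the actions above, $T^b$ and $\Phi$ preserve $i_{\max}$, so $i_{\max}(G(X)) = i_{\max}(X) - a$; comparing with $G(X) \cong X$ (which forces equal cohomology, hence equal $i_{\max}$) gives $a = 0$. With $a = 0$ the object $G(X) = T^b \Phi(X)$ has its top cohomology in the same cohomological degree as $X$, and there $j_{\max}(G(X)) = j_{\max}(X) - b$; comparing again with $G(X) \cong X$ forces $b = 0$. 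This proves (1).

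For part (2), suppose $(d', \bfl', \Phi')$ is a second triple with $\Phi'$ plain for which \eqref{E2.9.1} holds, so that $F \cong \Sigma^{d'} T^{\bfl'} \Phi'$. Since the Serre functor is unique up to natural isomorphism, the two decompositions give $\Sigma^d T^{\bfl} \Phi \cong \Sigma^{d'} T^{\bfl'} \Phi'$, and since $\Sigma, T$ are invertible this yields $\Phi' \cong \Sigma^{a} T^{b} \Phi$ with $a = d - d'$ and $b = \bfl - \bfl'$. The existence of a nonzero $\Phi$-plain object shows $\D_{\epsilon}(A) \neq 0$; because $\Phi'$ is plain, the thick subcategory generated by the $\Phi'$-plain objects is all of $\D_{\epsilon}(A)$, so there is a nonzero object $Y$ with $\Phi'(Y) \cong Y$. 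Then $\Sigma^a T^b \Phi(Y) \cong \Phi'(Y) \cong Y$, and applying part (1) to $G = \Sigma^a T^b \Phi$ gives $a = b = 0$. Hence $d = d'$, $\bfl = \bfl'$, and $\Phi \cong \Phi'$, which is the asserted uniqueness.

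The main obstacle is the choice of invariant in part (1): the natural first attempt, a bigraded Euler characteristic $\sum_{i,j}(-1)^i \dim_k H^i(X)_j\, s^i t^j$, transforms correctly under the three functors but can vanish identically on a nonzero object, so it cannot by itself force $a = b = 0$. Replacing it with the extremal degrees $i_{\max}$ and $j_{\max}$ circumvents this, at the cost of verifying that $\Phi$ genuinely preserves both — which is precisely where one uses that $\Phi$ is an exact, grading-preserving twist. The rest of (1) is bookkeeping, and (2) is then a formal consequence once a nonzero $\Phi'$-fixed object has been extracted from plainness.
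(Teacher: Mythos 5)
Your proof is correct and takes essentially the same approach as the paper: part (1) rests on the fact that $\Phi$ preserves the bigraded dimensions of cohomology while the cohomology of a nonzero object of $\D_{\epsilon}(A)$ is bounded in both the complex and internal degrees, so a nontrivial shift $\Sigma^a T^b$ cannot fix it (the paper states this via boundedness directly, you via the extremal invariants $i_{\max}$ and $j_{\max}$, which is the same mechanism made explicit). Part (2) is likewise the paper's argument: write $\Phi'$ as $\Sigma^{d-d'} T^{\bfl-\bfl'} \Phi$, extract a nonzero $\Phi'$-plain object, and apply part (1).
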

\begin{proof} (1) Note that $H^*(\Phi(X))=H^*(X)$ as graded
$k$-spaces. Since $G(X)\cong X$, we have $\Sigma^a T^b H^*(X)
\cong H^*(X)$. Since $X$ is a bounded complex of graded 
finitely generated left $A$-modules, $H^*(X)$ is left and right 
bounded (using the complex degree) and bounded below (using
the internal grading). If $a\neq 0$ or $b\neq 0$ and if $G(X)
\cong X$, we have that $H^*(X)=0$. Equivalently, $X=0$ in
$\D_{\epsilon}(A)$. The assertion follows.

(2) Suppose that there is another decomposition $F=\Sigma^{d'}\circ
T^{\bfl'}\circ \Phi'$ such that there is a nonzero $\Phi'$-plain
object in $\D_{\epsilon}(A)$. Then $\Phi'=\Sigma^{d-d'}\circ
T^{\bfl-\bfl'}\circ \Phi$ and there is an $X\neq 0$ such that
$\Phi'(X)\cong X$. By part (a), $d'-d=0=\bfl'-\bfl$. Thus 
$\Phi'=\Phi$. 
\end{proof}

Suppose that $A$ is a connected $\mb{N}$-graded algebra. It was shown 
in~\cite[Corollary~D]{LPWZ} that $A$ is AS regular if and only if 
the Ext-algebra $E(k)$ is Frobenius; this generalized the 
well-known result of Smith in the Koszul case~\cite{Smith}. 
To close this section, we will show that if $A$ is generalized AS regular, then the Ext-algebra
$E(A_0)$ is Frobenius, where $A_0$ denotes the left (and right)
graded $A$-module $A/A_{\geq 1}$. 

\begin{proposition}
\label{xxpro3.7}
Suppose that $A$ is an $\mb{N}$-graded generalized AS regular algebra of dimension $d$, 
with Nakayama automorphism $\mu = \mu_A$.
\begin{enumerate}
\item
$A_0 = A/A_{\geq 1}$, considered as a complex concentrated in degree $0$, is a nonzero $\Phi$-plain 
object in $\D_{\epsilon}(A)$.
\item
If $A_0$ is semisimple, then
$\D_\epsilon(A)= \D^{pl}_\epsilon(A)$ is a skew Calabi-Yau triangulated 
category of dimension~$d$ with Nakayama functor 
$\Phi = {}^\mu A^1 \otimes -$.
\item
$E(A_0) := \bigoplus_{i,j} \Ext^i_A(A_0, A_0(j))$ is a (finite 
dimensional) $\Z$-graded Frobenius algebra.
\end{enumerate}
\end{proposition}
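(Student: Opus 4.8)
The plan is to verify the three parts in turn, noting that part~(3) is a direct application of Theorem~\ref{xxthm2.7} once part~(1) is established, and that only part~(2) uses the semisimplicity hypothesis.

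\emph{Part (1).} First I would check that $A_0$ lies in $\D_\epsilon(A) = \mc{D}_{\perf}(A) \cap \mc{E}(A)$. Since $A$ is locally finite, $A_0$ is finite dimensional, so as a complex concentrated in degree $0$ it has finite-dimensional cohomology and lies in $\mc{E}(A)$; and since $A$ is noetherian of finite global dimension $d$, the module $A_0$ admits a finite resolution by finitely generated graded projectives, so $A_0 \in \mc{D}_{\perf}(A)$. It is clearly nonzero. To see that $A_0$ is $\Phi$-plain, observe that because ${}^{\mu}A^{1}$ is projective as a right $A$-module, $\Phi(A_0) = {}^{\mu}A^{1} \otimes^L_A A_0 = {}^{\mu}(A_0)$, the left module $A_0$ with action twisted by $\mu = \mu_A$. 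As $\mu$ is a graded automorphism it preserves $A_{\geq 1}$ and so restricts to a ring automorphism $\bar\mu$ of $A_0 = A/A_{\geq 1}$, and a short calculation shows that $x \mapsto \bar\mu^{-1}(x)$ is an isomorphism ${}^{\mu}(A_0) \to A_0$ of graded left $A$-modules. This furnishes the required isomorphism $\phi \colon A_0 \to \Phi(A_0)$.

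\emph{Part (2).} Here I would exploit semisimplicity to control generation. Since $A$ is generalized AS regular, $\D_\epsilon(A) = \mc{E}(A)$. Writing $A_0 = \bigoplus_i S_i$ with each $S_i$ a simple $A_0$-module, each $S_i$ is a direct summand of the $\Phi$-plain object $A_0$, hence $S_i \in \D^{pl}_\epsilon(A)$ by definition of the thick subcategory generated by the $\Phi$-plain objects. The graded simple $A$-modules are exactly the shifts $T^j S_i$, and every object of $\mc{E}(A)$ is built by finitely many exact triangles from its finite-dimensional cohomology modules, each of which admits a finite filtration with simple subquotients; hence the $T^j S_i$ generate $\mc{E}(A)$ as a thick (indeed triangulated) subcategory. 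Therefore $\D^{pl}_\epsilon(A) = \mc{E}(A) = \D_\epsilon(A)$, and Theorem~\ref{xxthm3.5} shows this category is skew Calabi-Yau. Its dimension is $d$ and its Nakayama functor is $\Phi$ by the decomposition $F = \Sigma^d \circ T^{-\bfl} \circ \Phi$ recorded before Proposition~\ref{xxpro3.3}, $\Phi$ being plain on $\D^{pl}_\epsilon(A)$ by construction.

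\emph{Part (3).} By Proposition~\ref{xxpro3.3}, $(\D_\epsilon(A), \Sigma, T)$ satisfies Hypothesis~\ref{xxhyp2.5}. Combining this with the isomorphism $\phi$ produced in part~(1), Theorem~\ref{xxthm2.7} applies to $X = A_0$ and yields that $E(A_0) = \bigoplus_{i,j} \Hom_{\D_\epsilon(A)}(A_0, \Sigma^i T^j A_0)$ is a bigraded Frobenius algebra; unwinding the definitions of $\Sigma$ and $T$ identifies this with $\bigoplus_{i,j} \Ext^i_A(A_0, A_0(j))$, so part~(3) requires neither part~(2) nor semisimplicity. Finite dimensionality follows since the finite minimal graded projective resolution of $A_0$ has finitely generated terms concentrated in boundedly many internal degrees, whence $\Ext^i_A(A_0, A_0(j))$ vanishes unless $0 \le i \le d$ and $|j|$ is bounded. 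The one step demanding genuine care is the dévissage in part~(2)---confirming that the simple modules and their shifts generate all of $\mc{E}(A)$ as a thick subcategory---though this is a standard argument.
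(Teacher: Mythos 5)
Your proposal follows essentially the same route as the paper's proof: part (1) via the module twist, part (2) via d\'evissage from the simple summands of $A_0$, and part (3) via Theorem~\ref{xxthm2.7}. However, one step in part (2) is elided. You establish that each $S_i$ lies in $\D^{pl}_\epsilon(A)$ and that the shifts $T^j S_i$ generate $\mc{E}(A)$ as a thick subcategory, but to conclude $\D^{pl}_\epsilon(A) = \mc{E}(A)$ you need $T^j S_i \in \D^{pl}_\epsilon(A)$ for \emph{all} $j$, not just $j = 0$, and this does not follow formally from the definition of $\D^{pl}_\epsilon(A)$ as the thick subcategory generated by $\Xi(\Phi)$: the class $\Xi(\Phi)$ need not be closed under passing to summands, and an individual simple $S_i$ need not itself be $\Phi$-plain (the Nakayama automorphism may permute the simple summands of $A_0$), so one cannot just shift the containment $S_i \in \D^{pl}_\epsilon(A)$ without knowing that $\D^{pl}_\epsilon(A)$ is $T$-stable. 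The paper fills this with one sentence, citing the proof of Lemma~\ref{xxlem2.10} for the fact that $\D^{pl}_\epsilon(A)$ is closed under $T$; alternatively, since $\Phi T = T\Phi$, each $T^j A_0$ is itself $\Phi$-plain, hence lies in $\D^{pl}_\epsilon(A)$, and its summands $T^j S_i$ then lie there by thickness. With that sentence added your part (2) is complete. One point where your write-up improves on the paper: in part (3) you invoke Proposition~\ref{xxpro3.3} and Theorem~\ref{xxthm2.7} directly rather than going through part (2), so your argument for (3) does not use semisimplicity of $A_0$, which matches the statement (whose semisimplicity hypothesis is local to part (2)); the paper's own proof of (3) formally cites part (2), even though all it really needs is Hypothesis~\ref{xxhyp2.5} together with the isomorphism $A_0 \cong \Phi(A_0)$ from part (1).
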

\begin{proof}
(1) Since $\mu$ is a graded algebra automorphism, 
$\Phi(A_0)\cong A_0$.  Since $A$ is generalized AS regular, in particular noetherian 
of finite global dimension, all complexes in ${\mathcal D}^b_{f}(A)$ are perfect.  
Thus $D_{\epsilon}(A)$ is the same as $\mc{E}(A)$, the subcategory of ${\mathcal D}^b_{f}(A)$ 
consisting of complexes with finite-dimensional cohomologies.  In particular, $A_0 \in D_{\epsilon}(A)$.

(2) By Theorem~\ref{xxthm3.5}, it suffices to show that $\Phi$ is plain.  
By part (1), $A_0\in \Xi(\Phi)$.  Then the thick subcategory $\D_{\epsilon}^{pl}$ generated by $\Xi(\Phi)$ contains 
all direct summands of $A_0$, and since $A_0$ is semisimple, $\D_{\epsilon}^{pl}$ contains all simple $A_0$-modules $M$.
We saw in the proof of  Lemma~\ref{xxlem2.10} that $\D_{\epsilon}^{pl}$ is closed under $T$, so $\D_{\epsilon}^{pl}$ 
contains $T^j(M)$ for all such $M$.   Now it is easy to see that the 
triangulated category generated by all of these complexes is $\mc{E}(A)$.  So 
$\D_{\epsilon}^{pl} = \mc{E}(A) = D_{\epsilon}(A)$.
Therefore $\Phi$ is plain. 

(3) By part (2), $\D_\epsilon(A)$ is a skew Calabi-Yau 
category with Nakayama functor $\Phi = {}^\mu A^1 \otimes -$. 
  It follows from the noetherian condition that $E(A_0)$ is finite 
dimensional.  By standard facts about the derived category, we can identify $E(A_0)$ with $\bigoplus_{i,j} \Hom_{\D_{\epsilon}(A)}(A_0, \Sigma^i T^j(A_0))$.
Since $\Phi(A_0) \cong A_0$ by part (1), $E(A_0)$ is Frobenius by Theorem \ref{xxthm2.7}. 
\end{proof}

\section{Applications of the formula for the Nakayama 
automorphism of $E$}
\label{xxsec4}

In the previous section, we showed that given any generalized AS 
Gorenstein algebra $A$, we can associate a $\Phi$-skew Calabi-Yau 
triangulated category $\mc{B} = \D_{\epsilon}(A)$, the full 
subcategory of the bounded derived category of finitely generated 
graded $A$-modules consisting of perfect complexes with 
finite-dimensional cohomology.  In this section, we show how 
one can obtain interesting information about $A$ by using Theorem~\ref{xxthm2.7} to study the 
Ext-algebras associated to objects $X$ in this category.   

First, note that in the case that $A$ does not have finite global 
dimension, it is conceivable that $\mc{B}$ may be the zero category, 
in which case it carries no interesting information.  However, in 
all of the examples we know, $\mc{B}$ is nonzero and we conjecture that 
this is always the case.  In Section~\ref{xxsec5} we will show 
that $\mc{B}$ is nonzero in many important common cases.  

Theorem~\ref{xxthm2.7} applies only to a $\Phi$-plain object $X$.
Recall that $\Phi$ is induced by the twist $M \mapsto {}^{\mu} M$ on left modules in this case, where $\mu= \mu_A$ is the Nakayama automorphism of $A$.  Thus we need $\mc{B}^{pl}$ to be nonzero.  In the remainder of the section, we explore two different cases where this holds:  when $\mu_A$ is of a special form, or when  $A$ has finite global dimension.

\subsection{Application one}
\label{xxsec4.1}
Let $A$ be noetherian connected graded AS Gorenstein. Any Hopf 
algebra $H$ acting on $A$ such that $A$ is a left $H$-module 
algebra determines a map $\hdet: H \to k$, the 
\emph{homological determinant} \cite[Definition 3.3]{KKZ}, 
which gives important information about the action.  In this paper we only study the case 
where $H= kG$ is a group algebra for a group $G$ of graded 
automorphisms of $A$.
Recall from the previous section that the rigid dualizing complex of 
$A$ is of the form
\[
R = R^d\Gamma_{\mf{m}_A}(A)^*[d] \cong {}^{\mu} A^1(-{\bfl})[d],
\]
where $\mu = \mu_A$.  
Now since $A$ has a left $kG$-module structure, $R^d\Gamma(A)^*$ also 
obtains a left $kG$-module structure; see \cite[Section 3]{KKZ} or \cite[Remark 3.8]{RRZ} for more details.  
Thus via the isomorphism above, ${}^{\mu} A^1(-{\bfl})[d]$ has a left 
$kG$-action.  Each element of $kG$ acts on the $1$-dimensional degree ${\bfl}$ 
piece by a scalar, and this assignment defines the homological 
determinant $\hdet = \hdet(A): kG \to k$.   We do not give the precise definition since it 
is not needed in the proofs below.  We will, however, rely on some theorems about $\hdet$ 
which are established in \cite{RRZ}.

Recall that $|x|$ indicates the degree of a homogeneous element $x$.
For any $\mb{Z}$-graded algebra $R$ and $c \in k^{\times}$, we may define 
a graded automorphism $\xi_c$ of $R$ where $\xi_c(x) = c^{|x|} x$ for 
homogeneous elements $x$.  Similarly, 
if $R$ is $\mb{Z}^2$-graded 
then $\xi_{c,d}$ is the $\mb{Z}^2$-graded automorphism with 
$\xi_{c,d}(x) = c^id^j x$ for homogeneous elements $x$ of degree $|x| = (i,j)$.
This definition extends in an obvious way to multi-gradings.

Now suppose that $A$ is connected graded noetherian AS Gorenstein as above, and that $\mu = \mu_A$ happens to have the form $\xi_c$ for some $c \in k^\times$.  While this seems quite restrictive, we will see in the proof of Theorem~\ref{xxthm5.3} that any 
noetherian AS Gorenstein algebra is closely related to another one with a Nakayama automorphism of this simple form.
For any $\mb{Z}$-graded $A$-module $M$, 
the map $\rho_M: M \to \Phi(M) = {} ^{\mu} M$ given by 
$m \mapsto c^{|m|} m$ for homogeneous elements $m$ is an isomorphism 
of graded left $A$-modules.   This extends in an obvious way 
to complexes and thus to objects of the derived category $\mc{D}(A)$ and its subcategory $\mc{B} = \D_{\epsilon}(A)$.  
Thus for any object $X \in \mc{B}$ there is an isomorphism $\rho_X: X \to \Phi(X)$, and so we 
have a natural isomorphism of functors $\rho: \1 \to \Phi$.   In particular, we can apply Theorem~\ref{xxthm2.7} 
to obtain the following result about the bigraded Ext-algebra of $X$.

\begin{theorem}
\label{xxthm4.1}
Let $A$ be noetherian connected graded AS Gorenstein, with 
$\mu_A = \xi_c$ for some $c \in k^{\times}$.  Assume that 
${\mathcal B}:=\D_{\epsilon}(A) \neq 0$.  Let $d$ be the injective dimension of $A$, and let $\bfl$ be 
the AS index of $A$.
\begin{enumerate}
\item
For any nonzero $X \in {\mathcal B}$, the Ext-algebra 
$E = \bigoplus_{i,j} \Hom_{\mathcal B}(X, \Sigma^i T^j X)$
is Frobenius with bigraded Nakayama automorphism 
$\mu_E = \xi_{(-1)^{d+1}, c^{-1}}$.
\item
$c^{{\bfl}} = 1$, and $\hdet \mu_A = c^{{\bfl}} = 1$.
\end{enumerate}
\end{theorem}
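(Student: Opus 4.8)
The plan is to obtain both parts from Theorem~\ref{xxthm2.7} applied to $\mathcal{B} = \D_\epsilon(A)$, which by Proposition~\ref{xxpro3.3} is $\Phi$-skew Calabi-Yau with $s = (-1)^d$, $t = 1$, and $\Phi = {}^{\mu}A^1 \otimes -$. Because $\mu_A = \xi_c$, the degree-wise scalings $\rho_M \colon M \to {}^{\mu}M$, $m \mapsto c^{|m|}m$, assemble into a natural isomorphism $\rho \colon \1 \to \Phi$, so each nonzero $X$ carries the isomorphism $\phi = \rho_X \colon X \to \Phi(X)$ demanded by Theorem~\ref{xxthm2.7}. Hence $E(X)$ is bigraded Frobenius, and what remains is (i) to identify its Nakayama automorphism $\mu_E$, and (ii) to extract $c^{\bfl} = 1$ and compute $\hdet(\mu_A)$.

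For part (1) I would substitute $\phi = \rho_X$ into the Nakayama formula of Theorem~\ref{xxthm2.7}: for $g \in \Hom^{i,j}(X,X)$,
\[
\mu_E(g) = (-s)^i t^j\, \Sigma^i T^j(\rho_X)^{-1}\circ \Phi(g)\circ \rho_X = (-1)^{(d+1)i}\, \Sigma^i T^j(\rho_X)^{-1}\circ \Phi(g)\circ \rho_X .
\]
Naturality of $\rho$ on the morphism $g\colon X \to \Sigma^i T^j X$ gives $\Phi(g)\circ \rho_X = \rho_{\Sigma^i T^j X}\circ g$, so everything reduces to comparing $\rho_{\Sigma^i T^j X}$ with $\Sigma^i T^j(\rho_X)$. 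The only genuine content is the interaction of $\rho$ with the two shifts: since $\rho_X$ has complex degree $0$ and $\Sigma$ preserves internal degree, $\rho_{\Sigma X} = \Sigma(\rho_X)$ with no Koszul sign, whereas $T$ (the grading shift $M \mapsto M(1)$) lowers internal degree by one, so degree-wise scaling forces $\rho_{TX} = c^{-1}T(\rho_X)$. Iterating, $\rho_{\Sigma^i T^j X} = c^{-j}\,\Sigma^i T^j(\rho_X)$, whence $\Sigma^i T^j(\rho_X)^{-1}\circ \rho_{\Sigma^i T^j X} = c^{-j}\id$ and $\mu_E(g) = (-1)^{(d+1)i}c^{-j}g$; that is, $\mu_E = \xi_{(-1)^{d+1},\, c^{-1}}$.

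For part (2), the identity $c^{\bfl} = 1$ will fall out of the compatibility of $\mu_E$ with the Frobenius form alone, with no further input about $A$. Let $\lambda(h) = \Tr_X(\rho_X * h)$ be the Frobenius functional, so $\lambda = (-,1)$; it is supported on the top piece $E^{d,\bfl}$ and nonzero there, since nondegeneracy pairs $1 \in E^{0,0}$ (nonzero because $X \neq 0$) against $E^{d,\bfl}$. The standard Frobenius identity $\lambda \circ \mu_E = \lambda$ then applies. On $E^{d,\bfl}$ the automorphism $\mu_E$ acts by the scalar $(-1)^{(d+1)d}c^{-\bfl}$, and since $(d+1)d$ is even this is just $c^{-\bfl}$; as $\lambda$ is nonzero on $E^{d,\bfl}$, the relation $\lambda\circ\mu_E = \lambda$ forces $c^{-\bfl} = 1$, i.e.\ $c^{\bfl} = 1$.

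It remains to identify $\hdet(\mu_A)$ with $c^{\bfl}$. As $\mu_A = \xi_c$, this is exactly the homological determinant of the degree-wise scaling, which I would compute straight from the definition: $\xi_c$ acts on every graded object functorially built from $A$ by $c^{m}$ in internal degree $m$, so it scales $R^d\Gamma_{\fm_A}(A)$ in degree $-\bfl$ by $c^{-\bfl}$; the degree-$\bfl$ piece of $R^d\Gamma_{\fm_A}(A)^* \cong {}^{\mu}A^1(-\bfl)$ is the contragredient dual $(R^d\Gamma_{\fm_A}(A)_{-\bfl})^*$, on which $\xi_c$ therefore acts by $c^{\bfl}$, giving $\hdet(\mu_A) = c^{\bfl}$. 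Combined with $c^{\bfl}=1$ from the previous step, this yields $\hdet(\mu_A) = c^{\bfl} = 1$. I expect this last step to be the main obstacle: one must transport the $\xi_c$-action through the dualizing isomorphism with the correct contragredient convention, since the whole point of $\hdet$ is that it is \emph{not} the naive degree-$0$ action. The normalization can be pinned down and checked against $A = k[x_1,\dots,x_n]$, where $\hdet(\xi_c) = c^n$ is the ordinary determinant and $\bfl = n$.
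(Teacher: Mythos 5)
Your proposal is correct and, for the heart of the theorem, follows the same route as the paper. Part (1) is the paper's argument: both use the natural isomorphism $\rho \colon \1 \to \Phi$ coming from $\mu_A = \xi_c$, the commutation relations $\Sigma(\rho_X) = \rho_{\Sigma X}$ and $T(\rho_X) = c\,\rho_{TX}$ (the paper's \eqref{E4.1.1}), and the Nakayama formula of Theorem~\ref{xxthm2.7}; the paper verifies $\Sigma^i T^j(\rho_X)^{-1} \circ \Phi(g) \circ \rho_X = c^{-j} g$ by an element-wise computation on a complex of projectives rather than by invoking naturality of $\rho$, but these are the same calculation. Your derivation of $c^{\bfl} = 1$ from invariance of the Frobenius functional under $\mu_E$ is also exactly the paper's argument with $\mf{e} = \langle 1, - \rangle$ (up to the sign of the internal degree of the top piece, which is $(d,-\bfl)$ in the conventions of Section~\ref{xxsec3} since $F = \Sigma^d T^{-\bfl}\Phi$; either sign yields $c^{\bfl}=1$). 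The one genuine divergence is the final step: the paper obtains $\hdet(\mu_A) = c^{\bfl}$ by citing \cite[Lemma 5.3]{RRZ}, whereas you propose to recompute it from the definition of $\hdet$. Your sketch has the right shape and the right answer, but the assertion that $\xi_c$ acts by $c^m$ in internal degree $m$ on $R^d\Gamma_{\fm_A}(A)$ is precisely the point requiring proof: the action of a graded automorphism on local cohomology is defined through functoriality and an equivariant structure, not degree by degree, and pinning down that it is the naive scaling (together with the contragredient convention on the dual) is exactly what the cited lemma supplies. So in a final write-up you should either cite \cite[Lemma 5.3]{RRZ} as the paper does, or spell out that equivariance argument; the polynomial-ring sanity check is consistent with the normalization but does not substitute for it.
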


\begin{proof}
(1) As we have seen, there is an isomorphism $\rho_X: X \to \Phi(X)$.
We also know by Proposition~\ref{xxpro3.3} that $(\mc{B}, \Sigma, T)$ satisfies Hypothesis~\ref{xxhyp2.5} 
with $s = (-1)^d$ and $t = 1$, and so 
Theorem~\ref{xxthm2.7} applies, where the Serre functor $F$ of $\mc{B}$ has the form $\Sigma^d T^{-\ell} \Phi$.

It follows easily from the definitions that
for any object $X$,
\begin{equation}
\label{E4.1.1}\tag{E4.1.1}
\Sigma (\rho_X) =\rho_{\Sigma(X)} \quad {\text{and}}\quad
T (\rho_X) =c \;\rho_{T(X)}.
\end{equation}
Now let $X$ be any nonzero object in $\mc{B}$ and let $\phi = \rho_X:  X\to \Phi(X)$. 
Since $X$ is perfect, by replacing $X$ with a quasi-isomorphic complex of projectives, we can think of
$\phi$ as well as elements in $\Hom_{\mc{B}}(X, \Sigma^i T^j X)$ as actual maps of complexes.
Let 
$g \in E_{i,j} = \bigoplus_{i, j \in \mb{Z}} \Hom_{\mc{B}}(X, \Sigma^i T^j X)$
be a homogeneous element of degree $(i,j)$. Then
\[
\Sigma^i T^j(\phi)^{-1} \circ \Phi(g) \circ \phi
= \Sigma^i T^j(\rho_X)^{-1} \circ \Phi(g) \circ \rho_X
= c^{-j} (\rho_{\Sigma^i T^j X})^{-1} \circ \Phi(g) \circ \rho_X,
\]
using \eqref{E4.1.1}.  
Let $X^n$ be the $n$th term in the complex $X$ and let $x \in (X^n)_m$ be a homogeneous element 
of degree $m$ in this module.  Then we calculate 
\[
c^{-j} (\rho_{\Sigma^i T^j X})^{-1} \circ \Phi(g) \circ \rho_X(x) = c^{-j}   c^{-m} g(c^m x) = c^{-j} g(x).
\]
Since this is independent of $n$ and $m$, we see that $\Sigma^i T^j(\phi)^{-1} \circ \Phi(g) \circ \phi = c^{-j} g$, 
as morphisms of complexes.

By Theorem~\ref{xxthm2.7}, the action of the Nakayama automorphism $\mu_E$ on the $(i,j)$-graded 
piece is multiplication by the scalar $(-1)^{(d+1)i} c^{-j}$.  In other words, thinking of 
$E$ as $\mb{Z}^2$-graded, we have 
$$\mu_E = \xi_{(-1)^{d+1}, c^{-1}}.$$

(2) Since $E$ is Frobenius with Nakayama automorphism $\mu = \mu_E$, 
there is a nondegenerate bilinear form $\langle -, - \rangle$ on $E$,
such that 
$$\langle a, b \rangle = \langle \mu(b), a \rangle$$ 
for $a, b \in E$. Let $\mf{e} = \langle 1, - \rangle$.  Then under the 
natural action of $G = \Autz(E)$ on $E^* = \Hom_k(E, k)$, we have 
\[
\mu(\mf{e})(b) = \mf{e}(\mu^{-1}(b)) = 
\langle 1, \mu^{-1}(b) \rangle = \langle b, 1 \rangle = \langle 1, b \rangle =  \mf{e}(b) 
\]
and thus $\mu(\mf{e}) = \mf{e}$.
Note that $E^*$ is $\mb{Z}^2$ graded with $(E^*)_{i,j} = \Hom_k(E_{-i,-j}, k)$.
Since $\mu_E = \xi_{(-1)^{d+1}, c^{-1}}$, it is 
easy to see that $\mu$ also must act on $E^*$ via the same rule 
$x \mapsto (-1)^{i(d+1)} c^{-j} x$ for $|x| = (i,j)$.  Since 
$\mf{e}$ has degree $(-d, {\bfl})$ we also have 
$\mu(\mf{e}) = (-1)^{-(d+1)d} c^{-{\bfl}} \mf{e} = c^{-{\bfl}} \mf{e}$.  Thus 
$c^{{\bfl}} = 1$.

Finally, since $\mu_A = \xi_c$ and $A$ has AS index ${\bfl}$, it also 
follows that $\hdet_A(\mu_A) = c^{{\bfl}}$ \cite[Lemma 5.3]{RRZ}. 
Thus $\hdet(\mu_A) = 1$.
\end{proof}

In Section~\ref{xxsec5}, we will extend the result $\hdet \mu_A = 1$ to 
all AS Gorenstein algebras with $\D_{\epsilon}(A) \neq 0$ 
[Theorem \ref{xxthm5.3}].

\subsection{Application two}
\label{xxsec4.2}

Throughout this application, we assume that $A$ is an $\mb{N}$-graded 
generalized AS regular algebra, as defined in Definition~\ref{xxdef3.1}.
We also assume that $A_0$ is a semisimple $k$-algebra.  
Let $X$ be the trivial module $A_0 := A/A_{\geq 1}$, which 
is viewed as a complex concentrated in degree $0$.   Let $\mc{B} = \D_{\epsilon}(A)$ be defined as above. 
As already proved in Proposition~\ref{xxpro3.7}, the bigraded Ext-algebra
\[
E = \bigoplus_{i,j} \Hom_{\mc{B}}(X, \Sigma^i T^j X) \cong 
\bigoplus_i \Ext^i_A(A_0, A_0)
\]
is a graded Frobenius algebra.    Theorem~\ref{xxthm2.7} also gives a formula for the Nakayama 
automorphism $\mu_E$ of $E$.  In this section we study the formula for $\mu_E$ more closely 
and show that it recovers and generalizes some existing results in the literature on the Nakayama automorphisms 
of Ext-algebras of regular algebras.

We first set up some notation.  
Let 
\[
P = 0 \to P^{(-d)} \to  \dots \to P^{(-2)} \overset{\partial^{(-2)}}{\to} P^{(-1)} 
\overset{\partial^{(-1)}}{\to} P^{(0)} \to 0
\]
 be a minimal graded projective 
resolution of $X=A_0$, where  each term is of the form 
$P^{(-i)} = A \otimes_{A_0} V^{(-i)}$ for some finite-dimensional
graded left $A_0$-module $V^{(-i)}$.      Note that throughout this section, we will place parentheses 
around the superscripts indicating cohomological degrees, in order to avoid notational confusion with exponents.
Clearly, $V^{(0)} = A_0$.   Explicitly, one may construct the $P^{(-i)}$ and $\partial^{(-i)}$ inductively 
by letting $K^{(-i+1)} = \ker \partial^{(-i+1)}$ (or $K^{(0)} = A_{\geq 1}$ when $i =1$), taking 
$V^{(-i)} = K^{(-i+1)}/ A_{\geq 1} K^{(-i+1)}$, and letting $\partial^{(-i)}: P^{(-i)} = A \otimes_{A_0} V^{(-i)} \to K^{(-i+1)}$ be defined by choosing some $A_0$-module map $j: V^{(-i)} \to K^{(-i+1)}$ such that $\pi \circ j = 1$, 
where $\pi: K^{(-i+1)} \to V^{(-i)}$ is the natural surjection.

Let $\sigma \in \Autz(A)$. This restricts to an algebra isomorphism
$\sigma \colon A_0 \to A_0$, and one has an isomorphism of left
$A$-modules $A_0 \cong {}^\sigma A_0$ (with underlying set map
given by $a \mapsto \sigma(a)$). 
This may be lifted to a graded isomorphism of complexes $\phi: P \to {}^{\sigma} P$.   
Identifying the underlying vector space of $^{\sigma} P^{(-i)}$ with 
$P^{(-i)}$ for each $i$, 
we may also think of $\phi: P \to P$ as a \emph{$\sigma$-linear} 
map of complexes; that  is, each map $\phi^{(-i)}: P^{(-i)} \to P^{(-i)}$ 
satisfies $\phi^{(-i)}( ax ) = \sigma(a) x$ for $x \in P^{(-i)}$, $a \in A$.

It is easy to calculate $\phi^{(0)}$ and $\phi^{(1)}$ explicitly.
Identifying $P^{(0)} = A$, clearly we may take $\phi^{(0)} = \sigma: A \to A$. 
By the description of the formation of the minimal projective resolution given above, we may take $V^{(-1)} = A_{\geq 1}/A_{\geq 1} A_{\geq 1}$ as a left $A_0$-module.  
Then $\sigma$ restricts to $A_{\geq 1}$ and factors through to give a map $\overline{\sigma}: V^{(-1)} \to V^{(-1)}$, 
which induces the map $\phi^{(1)} : P^{(-1)} \to P^{(-1)}$ by applying $A \otimes_{A_0} - $.
For example, if $A$ is generated in degrees $0$ and $1$, then we may identify $V^{(-1)}$ with 
$A_{\geq 1}/A_{\geq 2} = A_1$ and we simply have $\overline{\sigma} = \sigma \vert_{A_1}$.  
If one has an explicit graded presentation of $A$, it is also straightforward to calculate $\phi^{(2)}$ 
explicitly in terms of the action of $\sigma$ on the relations, but  it could be computationally difficult to find the entire $\sigma$-linear map of complexes $\phi$ explicitly when the global dimension of $A$ is large.

Next, we recall that each $\sigma\in \Autz(A)$ induces a (bi)-graded automorphism
$f_\sigma$ of the Ext-algebra $E = \bigoplus_{i \geq 0} \Ext^i_A(A_0, A_0)$, 
so that the group $\Autz(A)^{op}$ acts on $E$ naturally.  
(In the case when $A$ is connected, this graded automorphism $f_\sigma$ 
of the Ext-algebra $E = \bigoplus_{i \geq 0} \Ext^i_A(k, k)$ was defined 
in~\cite[Lemma 4.2]{JZ}; see also \cite[Remark 5.11(a)]{KKZ}.) The map 
$f_{\sigma}$ is described in the following 
simple way.  Fix a $\sigma$-linear isomorphism of complexes 
$\phi: P \to P$ as above.  Since $P$ is a minimal graded projective resolution, 
$\partial^{(i)}(P^{(i)}) \subseteq A_{\geq 1} P^{(i-1)}$ for each $i$.
Thus the differentials in the complex $\Hom_A(P, A_0)$ are $0$, 
and hence we may identify  $\Ext^i_A(A_0, A_0)$ with $\Hom_A(P^{(-i)}, A_0)$ 
for each $i$.  Then $f_\sigma: \Hom(P^{(-i)}, A_0) \to \Hom(P^{(-i)}, A_0)$ 
is given by  $g \mapsto \sigma^{-1} \circ g \circ \phi^{(-i)}$. (If $g$ is 
$A$-linear, then $g \circ \phi^{(-i)}$ is $\sigma$-linear and 
$\sigma^{-1} \circ g \circ \phi^{(-i)}$ is again $A$-linear.)

We use the notation $M^{\vee} = \Hom_{A_0}( - , A_0)$ 
for the dual of a graded left $A_0$-module $M$.  Of course, this is the usual vector space dual $M^*$ 
when $A$ is connected.
Note that we also have canonical isomorphisms 
$$
\Hom_A(P^{(-i)}, A_0) \cong \Hom_A(P^{(-i)}/A_{\geq 1} P^{(-i)}, A_0) 
\cong \Hom_{A_0}( V^{(-i)}, A_0) = (V^{(-i)})^{\vee}.
$$
In particular, this identifies $\Ext^1_A(A_0, A_0)$ with $(V^{(-1)})^{\vee}$.
In the special case that $A$ is generated in degrees $0$ and $1$, we saw above that we can also identify 
$V^{(-1)}$ with $A_1$, and hence $\Ext^1_A(A_0, A_0)$ is identified with $(A_1)^{\vee}$.
Now combining the observations above with Theorem~\ref{xxthm2.7}, 
we obtain the main result of this section.

\begin{theorem} 
\label{xxthm4.2}
Let $A$ be 
generalized AS regular of global dimension $d$, where $A_0$ is semisimple.  Let $\Autz(A)^{op}$ act on the 
Ext-algebra $E(A_0) = \bigoplus_{i \geq 0} \Ext^i_A(A_0, A_0)$ 
as defined above.
\begin{enumerate}
\item
$E$ is Frobenius and 
$\mu_E = \xi_{(-1)^{d+1}, 1} \circ f_{\mu_A}$.  

\item
Suppose that $A$ is generated as an algebra in degrees $0$ and $1$.  
Under the natural identification of $E^1 = \Ext^1_A(A_0, A_0)$  with $(A_1)^{\vee} = \Hom_{A_0}(A_1, A_0)$, the map 
$\mu_E \vert_{E^1}$ is identified with $g \mapsto (-1)^{d+1} (\mu_A \vert_{A_0})^{-1} \circ  g \circ \mu_A \vert_{A_1}$.

\item If $A$ is connected and generated in degree $1$, then 
$$\mu_E \vert_{E^1} = (-1)^{d+1} (\mu_A \vert_{A_1})^*.$$
Moreover, $\mu_A = 1$ if and only if $E$ is graded-symmetric. 
\end{enumerate}
\end{theorem}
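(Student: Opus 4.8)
The plan is to derive all three parts as corollaries of the Nakayama formula in Theorem~\ref{xxthm2.7}, evaluated in the concrete model of $\mc{B} = \D_\epsilon(A)$ provided by the minimal graded projective resolution $P$ of $A_0$. Write $\mu = \mu_A$. By Proposition~\ref{xxpro3.7}(2) the category $\mc{B}$ is skew Calabi--Yau with Nakayama functor $\Phi = {}^{\mu}A^1 \otimes -$, and by Proposition~\ref{xxpro3.7}(3) the bigraded Ext-algebra $E = E(A_0)$ is Frobenius, so Theorem~\ref{xxthm2.7} applies to $X = A_0$. First I would take for the required isomorphism $\phi \colon X \to \Phi(X)$ exactly the canonical $\mu$-semilinear chain isomorphism $\phi \colon P \to {}^{\mu}P$ that is used in Section~\ref{xxsec4.2} to define $f_\sigma$ (with $\sigma = \mu$). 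Since $s = (-1)^d$ and $t = 1$ by Proposition~\ref{xxpro3.3}, the scalar $(-s)^i t^j$ of Theorem~\ref{xxthm2.7} becomes $(-1)^{(d+1)i}$, so for homogeneous $g \in \Hom^{i,j}(X,X)$ one has
\[
\mu_E(g) = (-1)^{(d+1)i}\, \Sigma^i T^j(\phi)^{-1} \circ \Phi(g) \circ \phi .
\]

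The heart of the argument, and the step I expect to be the main obstacle, is to identify the operator $g \mapsto \Sigma^i T^j(\phi)^{-1} \circ \Phi(g) \circ \phi$ with $f_{\mu}$ under the isomorphism $\Ext^i_A(A_0, A_0(j)) \cong \Hom_A(P^{(-i)}, A_0(j))$ afforded by the minimality of $P$. To do this I would represent an Ext class $g$ by a chain map $P \to \Sigma^i T^j P$ and recover its class from the component $g^{(-i)} \colon P^{(-i)} \to P^{(0)}(j)$ followed by the augmentation $P^{(0)} \to A_0$. Tracking the analogous component of $\Sigma^i T^j(\phi)^{-1} \circ \Phi(g) \circ \phi$, and using that $\Phi(g)$ agrees with $g$ on underlying complexes while $\phi^{(0)} = \mu$ and $\phi^{(-i)}$ are the chosen semilinear maps, one obtains $\mu^{-1} \circ g^{(-i)} \circ \phi^{(-i)}$; passing to the augmentation recovers precisely $f_{\mu}(g) = \mu^{-1} \circ g \circ \phi^{(-i)}$. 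The delicate points are to confirm that applying $\Sigma^i T^j$ to the degree-zero chain map $\phi$ introduces no Koszul sign (by the convention \eqref{E3.3.1}, the sign is $(-1)^{|\phi|\cdot i} = 1$), so that the only sign present is the $(-1)^{(d+1)i}$ already displayed, and to check the semilinear bookkeeping of $\phi$ against the derived composition $*$. This gives $\mu_E = \xi_{(-1)^{d+1},1} \circ f_{\mu}$, proving part~(1).

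Parts~(2) and~(3) then follow by specialization. For~(2), restricting the formula of~(1) to $E^1$ (so $i=1$) gives $\mu_E\vert_{E^1}(g) = (-1)^{d+1} f_\mu(g) = (-1)^{d+1}(\mu\vert_{A_0})^{-1}\circ g \circ \phi^{(-1)}$; when $A$ is generated in degrees $0$ and $1$ one identifies $V^{(-1)} = A_1$ and $\phi^{(-1)}\vert_{A_1} = \mu\vert_{A_1}$, as recorded before the theorem, yielding the stated expression. For~(3), connectedness forces $A_0 = k$ and hence $\mu\vert_{A_0} = \id$, so the formula collapses to $\mu_E\vert_{E^1} = (-1)^{d+1}(\mu\vert_{A_1})^*$, the transpose of $\mu_A\vert_{A_1}$.

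Finally, for the ``moreover'' I would use that $E$ is graded-symmetric exactly when its Nakayama automorphism sends each $g$ of cohomological degree $i$ to $(-1)^{(d+1)i} g$ (cf.\ Section~\ref{xxsec2.5}), i.e.\ exactly when $\mu_E = \xi_{(-1)^{d+1},1}$. By part~(1) this holds if and only if $f_\mu = \id$ on $E$. If $\mu_A = 1$ then $\phi$ can be taken to be the identity, so $f_\mu = \id$ and $E$ is graded-symmetric. Conversely, graded-symmetry forces $f_\mu\vert_{E^1} = \id$, hence $(\mu\vert_{A_1})^* = \id$ and so $\mu\vert_{A_1} = \id$; since $A$ is generated in degree $1$ over $A_0 = k$, a graded algebra automorphism fixing $A_1$ (and $A_0 = k$) pointwise is the identity, giving $\mu_A = 1$. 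Thus only the degree-one part of the formula is needed for the nontrivial direction.
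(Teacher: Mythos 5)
Your proposal is correct and follows essentially the same route as the paper: apply Theorem~\ref{xxthm2.7} to $X = A_0$ with $\phi$ the $\mu_A$-semilinear chain isomorphism of the minimal resolution $P$, identify the resulting derived-category formula with $(-1)^{(d+1)i} f_{\mu_A}$ by tracking the position-$(-i)$ component down to $\Hom_A(P^{(-i)}, T^j A_0)$, and then specialize to cohomological degree $1$ for parts (2) and (3). The one point the paper makes that you leave implicit is that, since $A$ and $E$ are connected, their only (graded) inner automorphism is trivial, so $\mu_A$ and $\mu_E$ are uniquely determined; this is what makes the equivalence ``$E$ graded-symmetric $\Leftrightarrow$ $\mu_E = \xi_{(-1)^{d+1},1}$'' in your argument for the ``moreover'' unambiguous.
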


\begin{proof}
(1)  We need to relate the action of $\mu_E$ on the Ext-algebra 
described in Theorem~\ref{xxthm2.7} in terms of the derived 
category to the action of $\mu_A$ defined above.   Keep all of the 
above notation; in particular, fix the minimal projective resolution $P$ 
of $A_0$.   As mentioned earlier several times, 
by standard facts about the derived category, $\Ext^i_A(A_0, A_0)_j$ may be identified with 
$\Hom_{\mc{D}_{\epsilon}(A)}(P,  \Sigma^i T^j P)$.   Recall that the 
explicit identification is made as follows.   As we already saw above, an element of 
$\Ext^i_A(A_0, A_0)_j$ corresponds to an element in 
$\Hom_{A}(P^{(-i)}, T^j(A_0))$.  Any element $G(i) \in \Hom_{A}(P^{(-i)},T^j( A_0))$ 
extends trivially to a map of complexes 
$$G :P\to \Sigma^i T^j (A_0).$$ 
Using the quasi-isomorphism 
$\Sigma^i T^j P \to \Sigma^i T^j A_0$,  
$G$  lifts to a map of complexes $g: P \to \Sigma^i T^j P$.  The $g$ so 
obtained is unique up to homotopy. 
Conversely, since $P$ is perfect, an element of $\Hom_{\mc{D}_{\epsilon}(A)}(P, \Sigma^iT^jP)$ 
is given by an actual map of complexes $P \to \Sigma^i T^j P$, 
which is determined (up to homotopy) by the degree $i$ map 
 $G(i): P^{(-i)}\to T^j P^{(0)}\to  T^j A_0$.

Now consider 
the action of $\mu_E$ on 
$g \in  \Hom_{\mc{D}_{\epsilon}(A)}(P,  \Sigma^i T^j P)$ as described by
Theorem~\ref{xxthm2.7}.  In degree $(i,j)$, the formula is 
\[
g \longmapsto (-1)^{(d+1)i} \, \Sigma^i T^j(\phi)^{-1} \circ \Phi(g) \circ \phi.
\]
 Thus we only 
need to calculate the degree $i$ map in the morphism of complexes 
$\mu_E(g)$.  Here, $\phi^{(-i)}: P^{(-i)} \to  \Phi(P)^{(-i)}$ is the map 
of position $-i$ in the isomorphism of complexes $\phi: P \to {}^{\mu_A} P$ 
fixed above.  The second map $\Phi(g)$ has the same underlying 
function as $g$, and we only consider the position $-i$, which is the map
$P^{(-i)}\to T^j P^{(0)}$. The third map $(\Sigma^i T^{j}(\phi))^{-1}$ 
when restricted to $T^j P^{(0)}$ is given by the isomorphism 
$\mu_A^{-1}: T^j ({}^{\mu_A} P^{(0)})\to T^j (P^{(0)})$.   
Now descending elements in $\Hom_A(P^{(-i)}, T^j P^{(0)})$ to 
$\Hom_{A}(P^{(-i)}, T^j A_0)$, the third map becomes the isomorphism  
$\mu^{-1} \colon T^j ({}^{\mu_A} A_0) \to T^j (A_0)$. Altogether, under the identification of 
$\Hom_{\D_{\epsilon}(A)}(P, \Sigma^iT^jP)$ with $\Hom_{A}(P^{(-i)}, T^j A_0)$ 
given above, we see that  $\mu_E(g)$ is given in complex degree $i$ 
precisely by $(-1)^{(d+1)i} \mu_A^{-1} \circ g \circ \phi^{(-i)}$.
In other words, this is the exactly the same as the 
action $f_{\mu_A}$ of $\mu_A$ on $\Ext^i_A(A_0, A_0)$ described above, except for the additional sign.

(2)  Recall our earlier calculation that $\phi^{(-1)}: P^{(-1)} \to P^{(-1)}$ is induced by applying $A \otimes_{A_0} - $ to the map 
$\overline{\sigma}: V^{(-1)} \to V^{(-1)}$, which when $A$ is generated in degrees $0$ and $1$
is the same as $\mu_A \vert_{A_1}$ under the 
identification of $V^{(-1)}$ with $A_1$.
The statement follows from restricting part (1) to degree $1$, once we identify 
$\Hom_A(P^{(-1)}, A_0)$ with $(A_1)^{\vee} = \Hom_{A_0}(A_1, A_0)$.

(3) The first statement is a special case of (2), since $\mu^{-1}: A_0 \to A_0$ is trivial when $A_0 = k$, and 
$(\mu_A \vert_{A_1})^*$ is by definition equal to $g \mapsto g \circ \mu_A \vert_{A_1}$ for 
$g \in (A_1)^* = \Hom_k(A_1, k)$.  Because $A$ and $E$ are connected, the only inner automorphism of
either algebra is the trivial one. Thus both $\mu_A$ and $\mu_E$ are uniquely determined in this case.
Note that $E$ is graded-symmetric if and only if $\mu_E = \xi_{(-1)^{d+1}, 1}$.  If $\mu_A = 1$, 
then $f_{\mu_A} = 1$ and so by part (1) we certainly get $\mu_E = \xi_{(-1)^{d+1}, 1}$.  Conversely, if 
$\mu_E = \xi_{(-1)^{d+1}, 1}$ then we get $(\mu_A \vert_{A_1})^* = 1$, and thus $\mu_A \vert_{A_1} = 1$.
Since $A$ is generated in degree $1$, $\mu_A = 1$.
\end{proof}

Part (2) of the theorem recovers and generalizes known results 
about the action of $\mu_E$ in degree $1$, which were proved in the connected 
$N$-Koszul case only in~\cite{BM}.   Note that our result allows one to calculate 
$\mu_E$ in fact in any degree, if one can calculate an explicit minimal resolution 
$P$ of $A_0$ and an explicit $\mu_A$-linear isomorphism $\phi: P \to P$.  

We note that a generalized 
AS regular algebra $A$ will be a \emph{skew Calabi-Yau} algebra as studied in \cite[Definition 0.1]{RRZ}, 
in cases where $A$ is homologically smooth in the sense of that definition 
(this is automatic, for instance, when $A$ is connected graded \cite[Lemma 1.2]{RRZ}).  
When $A$ is skew Calabi-Yau, part (1) of the theorem shows that $A$ is \emph{Calabi-Yau} ($\mu_A = 1$) if and only if 
$E$ is graded-symmetric.

An immediate consequence is the following theorem 
which answers a conjecture in \cite[Remark 4.2]{CWZ} 
affirmatively.  See also the discussion after \cite[Theorem 0.1]{CWZ}.
The definitions of the Hopf-theoretic notions involved in the following result may 
also be found in \cite{CWZ}.

\begin{theorem}
\label{xxthm4.3}
Let $A$ be a noetherian connected graded AS regular algebra 
generated in degree one with Nakayama automorphism $\mu_A$.
Let $K$ be a Hopf algebra with bijective antipode $S$ 
coacting on $A$ from the right. Suppose that the homological 
codeterminant \cite[Definition 1.5(b)]{CWZ} of the $K$-coaction 
on $A$ is the element $D\in K$ and that the $K$-coaction on 
$A$ is inner-faithful. Then
$$\eta_{D}\circ S^2=\eta_{{\mu_A}^{\tau}}\; ,$$
where $\eta_D$ is the automorphism of $K$ defined by $\eta_D(a)=D^{-1}a D$ 
and $\eta_{{\mu_A}^{\tau}}$ is the automorphism of $K$ given 
by conjugating by the transpose of the corresponding matrix
of $\mu_A$.
\end{theorem}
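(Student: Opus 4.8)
The plan is to reduce the statement to the comparison between the Nakayama automorphism $\mu_E$ of the Ext-algebra $E = E(k) = \bigoplus_i \Ext^i_A(k,k)$ and the Nakayama automorphism $\mu_A$ of $A$ furnished by Theorem~\ref{xxthm4.2}. Since $A$ is connected graded AS regular generated in degree one, Proposition~\ref{xxpro3.7} shows (with $A_0 = k$) that $E$ is a finite-dimensional graded Frobenius algebra, and Theorem~\ref{xxthm4.2}(3) gives the crucial identity
\[
\mu_E\vert_{E^1} = (-1)^{d+1}(\mu_A\vert_{A_1})^*.
\]
This is the only genuinely new input; the remaining ingredients --- the transfer of the coaction to $E$ and the relationship between the codeterminant, the antipode, and $\mu_E$ --- are supplied by the framework of \cite{CWZ}, and it is precisely the passage from $\mu_E$ to $\mu_A$ that closes the conjecture.

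First I would fix a basis $x_1, \dots, x_n$ of $A_1$ and write the right $K$-coaction as $\rho(x_i) = \sum_j x_j \otimes m_{ji}$ with matrix $M = (m_{ij}) \in M_n(K)$; inner-faithfulness guarantees that the entries $m_{ij}$ generate $K$ as an algebra, so an identity of automorphisms of $K$ may be verified on these generators. Functoriality of $\Ext_A(-, k)$ turns $\rho$ into a $K$-comodule-algebra structure on $E$ whose restriction to $E^1 \cong (A_1)^*$ is the contragredient coaction (introducing the antipode $S$ applied to $M$), and whose restriction to the one-dimensional top degree $E^d$ is multiplication by the homological codeterminant $D$. The equivariance of the Frobenius pairing under $\rho$, together with the defining relation $(a,b) = (\mu_E(b), a)$ of the Nakayama automorphism, then forces a conjugation identity matching $\eta_D \circ S^2$ with conjugation by the matrix of $\mu_E\vert_{E^1}$; this is exactly the relation between $D$, $S^2$, and $\mu_E$ underlying the conjecture of \cite{CWZ}.

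It then remains to substitute Theorem~\ref{xxthm4.2}(3). Writing $N$ for the matrix of $\mu_A\vert_{A_1}$, the dual $(\mu_A\vert_{A_1})^*$ has matrix equal to the transpose of $N$, so by the displayed identity the matrix of $\mu_E\vert_{E^1}$ is $(-1)^{d+1}$ times the transpose of $N$. The central scalar $(-1)^{d+1}$ cancels under conjugation, leaving conjugation by the transpose of $N$, which is exactly $\eta_{\mu_A^\tau}$. Hence $\eta_D \circ S^2 = \eta_{\mu_A^\tau}$ holds on the subalgebra generated by the $m_{ij}$, and by inner-faithfulness on all of $K$.

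The main obstacle I anticipate is the careful bookkeeping of the antipode and the transposes. One antipode appears in passing from the coaction on $A_1$ to the contragredient coaction on $E^1$, and re-expressing the resulting conjugation in terms of the original matrix $M$ produces the second, so that exactly $S^2$ (rather than $S^{-2}$, or an antipode composed with an inverse) emerges; one must also confirm that the top-degree coaction contributes $D$ itself rather than $S(D)$ or $D^{-1}$ under the chosen left/right and transpose conventions. Matching these conventions so that the final identity reads precisely $\eta_D \circ S^2 = \eta_{\mu_A^\tau}$ is the delicate part, while the genuinely homological content is entirely contained in Theorem~\ref{xxthm4.2} once the scalar $(-1)^{d+1}$ is seen to be irrelevant under conjugation.
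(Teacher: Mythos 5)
Your proposal is correct and follows essentially the same route as the paper: the paper's proof simply says to repeat the argument of \cite[Theorem 0.1]{CWZ} with the Koszul-case input \cite[Lemma 4.1]{CWZ} replaced by Theorem~\ref{xxthm4.2}, which is exactly your strategy of feeding the identity $\mu_E\vert_{E^1} = (-1)^{d+1}(\mu_A\vert_{A_1})^*$ into the \cite{CWZ} framework (coaction on the Ext-algebra, codeterminant in top degree, conjugation identity, inner-faithfulness to pass to all of $K$). Your reconstruction of the \cite{CWZ} internals and the observation that the central sign $(-1)^{d+1}$ cancels under conjugation match the intended argument; the only cosmetic difference is that you invoke part (3) of Theorem~\ref{xxthm4.2} where the paper cites part (2), which is immaterial since $A$ is connected and generated in degree one.
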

\begin{proof}[Proof of Theorem \ref{xxthm4.3}]
Replacing \cite[Lemma 4.1]{CWZ} by Theorem \ref{xxthm4.2}(2), 
the proof of \cite[Theorem 0.1]{CWZ} can be copied without 
much change. 
\end{proof}

The following further consequence generalizes
\cite[Theorem 0.6]{CWZ} to the non-$N$-Koszul case. The proof
is given in \cite[Proof of Theorem 0.6]{CWZ} by using Theorem \ref{xxthm4.3}. 

\begin{corollary}
\label{xxcor4.4}
Let $A$ be as in Theorem \ref{xxthm4.3} and suppose that $\rm{char}\; k = 0$. 
Further assume that $\mu_A = \xi_r$ for some $r \in k^\times$ and 
that $H$ is a finite dimensional Hopf algebra that acts on $A$ satisfying
\cite[Hypothesis 0.3]{CWZ}.  If the $H$-action on $A$ has trivial homological determinant, 
then $H$ is semisimple.
\end{corollary}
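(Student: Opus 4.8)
The plan is to reduce the statement to showing that the antipode of the relevant Hopf algebra squares to the identity, and then to invoke the theorem of Larson and Radford, which says that in characteristic zero a finite dimensional Hopf algebra is semisimple precisely when it is involutory. Since $H$ is finite dimensional with bijective antipode, a left $H$-module algebra structure on $A$ is the same datum as a comodule algebra structure over the dual Hopf algebra $K = H^*$, whose antipode $S = S_K = (S_H)^*$ is again bijective. Thus I would first pass to the coaction of $K$ on $A$, so that Theorem~\ref{xxthm4.3} becomes available. This is exactly the dualization used in \cite[Proof of Theorem 0.6]{CWZ}.

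Next I would match up the homological data on the two sides. Under the correspondence between actions of $H$ and coactions of $K$, the homological determinant $\hdet \colon H \to k$ of the $H$-action corresponds to a grouplike element $D \in K$, the homological codeterminant of the $K$-coaction. The hypothesis that the $H$-action has trivial homological determinant says exactly that $\hdet = \epsilon_H$, i.e. $D = 1_K$, so that the inner automorphism $\eta_D$ of $K$ is the identity. I would also check that the inner-faithfulness required by Theorem~\ref{xxthm4.3} is guaranteed by \cite[Hypothesis 0.3]{CWZ}, so that the theorem applies to the coaction of $K$.

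With these identifications in place, Theorem~\ref{xxthm4.3} yields $\eta_D \circ S^2 = \eta_{\mu_A^\tau}$, whence $S^2 = \eta_{\mu_A^\tau}$. Here the assumption $\mu_A = \xi_r$ does the decisive work: since $A$ is generated in degree one, $\mu_A$ acts on $A_1$ by the scalar $r$, so its matrix is $r I$ and the transpose $\mu_A^\tau$ is the same scalar matrix; conjugation by a scalar is trivial, so $\eta_{\mu_A^\tau} = \id_K$. Therefore $S_K^2 = \id$, and dualizing (using $S_{H^*} = (S_H)^*$) gives $S_H^2 = \id$. Invoking the Larson--Radford theorem, and using $\operatorname{char} k = 0$, I conclude that $H$ is semisimple.

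The main obstacle I anticipate is bookkeeping in the dualization dictionary rather than any deep new input: one must be careful that passing from the $H$-action to the $K$-coaction matches trivial homological determinant with $D = 1_K$, and that \cite[Hypothesis 0.3]{CWZ} transfers to the inner-faithfulness hypothesis of Theorem~\ref{xxthm4.3}. The conceptual crux is the observation that the scalar form $\mu_A = \xi_r$ forces $\eta_{\mu_A^\tau}$ to be the identity; for a general Nakayama automorphism this conjugation would be nontrivial, and one could only conclude that $S^2$ is inner rather than trivial, so the hypothesis $\mu_A = \xi_r$ is precisely what makes the semisimplicity argument go through.
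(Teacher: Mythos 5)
Your proposal is correct and takes essentially the same route as the paper: the paper's proof simply defers to \cite[Proof of Theorem 0.6]{CWZ} with Theorem~\ref{xxthm4.3} replacing \cite[Lemma 4.1]{CWZ}, and that argument is precisely what you spell out (dualize to the $K$-coaction, identify trivial $\hdet$ with $D=1$, observe that $\mu_A=\xi_r$ makes $\eta_{\mu_A^\tau}$ the identity, conclude $S^2=\id$, and invoke Larson--Radford in characteristic zero). The only difference is that you fill in the details that the paper leaves to the citation.
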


\section{The $\epsilon$-condition}
\label{xxsec5}
Let $A$ be a ${\mathbb Z}^w$-graded algebra for some $w \geq 1$.  
We say that $A$ is \emph{$\mb{N}^w$-graded} if 
$A_{i_1, i_2, \dots, i_w} \neq 0$ implies that $i_j \geq 0$ 
for all $1 \leq j \leq w$.  
The algebra $A$ is \emph{connected} if $A_{0, 0, \dots, 0} = k$. 
We also allow by convention $w = 0$, in which case $A$ is ungraded.  
We do not require that $A$ be locally finite in general in this section. 
Let $A \lGr$ be the category 
of $\mb{Z}^w$-graded left $A$-modules.  We write this as 
$(A, \mb{Z}^w) \lGr$ if we need to emphasize the grading group, for 
example if $A$ has multiple gradings.

\begin{definition}
\label{xxdef5.1} 
Let $A$ be a $\mb{Z}^w$-graded algebra, and let 
$\mc{D}^b(A \lGr)$ be the bounded derived category of $\mb{Z}^w$-graded 
left $A$-modules.  
\begin{enumerate}
\item
An object $X \in \mc{D}^b(A \lGr)$ is called an {\it $\epsilon$-object} 
if it is a perfect complex and $H^0(X[n])$ is a finite dimensional 
graded left $A$-module for all $n \in \mb{Z}$.
\item
Recall that $\mc{D}_{\epsilon}(A)$ is the subcategory of 
$\mc{D}^b(A \lGr)$ consisting of all $\epsilon$-objects.  
We say $A$ satisfies the {\it $\epsilon$-condition} if 
$\mc{D}_{\epsilon}(A) \neq 0$. We say that $(A, \mb{Z}^w)$ 
satisfies the $\epsilon$-condition if the grading group needs emphasis.
\end{enumerate}
\end{definition}

The $\epsilon$-condition is a rather mild condition and we conjecture
that all reasonable noetherian AS Gorenstein algebras satisfy it.
In the next result, we give some of its important properties.  
First, we briefly recall the notion of a twist of a graded algebra.  
Let $A$ be $\mb{Z}^w$-graded, and let a sequence 
$\wt{\sigma}:= \{\sigma_1,\cdots, \sigma_w\}\subset \Autw(A)$ of
pairwise commuting ${\mathbb Z}^w$-graded automorphisms of $A$ be given.
We write $\sigma^{r} = \sigma_1^{r_1} \dots \sigma_w^{r_w}$, for 
$r = (r_1, \dots, r_w) \in \mb{Z}^w$. 
The (left) graded twist of $A$
associated to $\tilde{\sigma}$ is a new graded algebra, denoted by
$A^{\tilde{\sigma}}$, such that $A^{\tilde{\sigma}}=A$ as
a ${\mathbb Z}^w$-graded vector space, and where the new multiplication
$\star$ of $A^{\tilde{\sigma}}$ is given by
$a \star b =\sigma^{|b|}(a) b$ for all homogeneous elements $a,b \in A$.  
Similarly, given a $\mb{Z}^w$-graded left $A$-module $M$, we may define a
$\mb{Z}^w$-graded $A^{\tilde{\sigma}}$-module $M^{\tilde{\sigma}}$ with the same 
graded vector space as $M$, but new action $a \star m = \sigma^{|m|}(a) m$ for homogeneous $a \in A^{\tilde{\sigma}}$, $m \in M$.
The functor $A \lGr \to (A^{\tilde{\sigma}}) \lGr$ which sends $M$ to $M^{\tilde{\sigma}}$ and 
is the identity map on morphisms is an equivalence of categories \cite[Theorem 3.1]{Zh}.

Below, we say that a commutative $k$-algebra is \emph{affine} if it is finitely generated as a $k$-algebra.
\begin{lemma}
\label{xxlem5.2} 
\begin{enumerate}
\item
Suppose that $A$ is a noetherian $\mb{Z}^w$-graded algebra with finite global dimension.  If there is a nonzero 
finite dimensional graded module over $A$, then $A$ satisfies the 
$\epsilon$-condition. As a consequence, every connected graded noetherian AS 
regular algebra satisfies the $\epsilon$-condition.  
\item
Let $A$ be $\mb{Z}^w$-graded.  Suppose that $\phi: {\mathbb Z}^w\to {\mathbb Z}^n$ is a group homomorphism.
Then $A$ is also $\mb{Z}^n$-graded, where $A_{r} = \bigoplus_{\{s | \phi(s) = r\}} A_s$ for 
$r \in \mb{Z}^n$. If $(A, \mb{Z}^w)$  satisfies the $\epsilon$-condition, 
then $(A, \mb{Z}^n)$  satisfies the $\epsilon$-condition.
\item
Suppose that $A$ is connected $\mb{N}^w$-graded with $w > 0$ and that $A$ 
satisfies the $\epsilon$-condition. 
If there is a $\mb{Z}^w$-graded algebra map $A\to B$ such that $B_A$ 
is finitely generated, then $(B, \mb{Z}^w)$ satisfies the 
$\epsilon$-condition. As a consequence, every affine connected graded 
commutative algebra satisfies the $\epsilon$-condition. 
\item
If $B$ is locally finite ${\mathbb N}$-graded and finite
over its affine center, then $B$ satisfies  the $\epsilon$-condition.
\item
Let $A$ and $B$ be connected $\mb{N}$-graded, and suppose that $A \otimes_k B$ 
is also noetherian. Then the following are equivalent:  
\begin{enumerate}
\item[{\rm{(i)}}] 
$(A\otimes_k B, \mb{Z}^2)$ satisfies the $\epsilon$-condition; 
\item[{\rm{(ii)}}] 
$(A \otimes_k B, \mb{Z})$ satisfies the $\epsilon$-condition; 
\item[{\rm{(iii)}}] 
both $(A, \mb{Z})$ and $(B, \mb{Z})$ satisfy the $\epsilon$-condition. 
\end{enumerate}
As a consequence, $A$ satisfies the $\epsilon$-condition if and only 
if $A[t]$ does (as either a $\mb{Z}^2$ or $\mb{Z}$-graded algebra).
\item
$A$ satisfies the $\epsilon$-condition if and only if a graded twist 
$A^{\tilde{\sigma}}$ does.
\end{enumerate}
\end{lemma}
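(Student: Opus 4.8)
The plan is to prove the six parts in a dependency order: (1) and (2) are elementary and serve as building blocks, (3) is the main engine, and (4)--(6) are then deduced from (1)--(3) together with standard equivalences; throughout, a nonzero perfect complex with finite-dimensional cohomology is what witnesses the $\epsilon$-condition.

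\emph{Parts (1) and (2).} For (1) I would take a nonzero finite-dimensional graded module $M$; since $A$ is noetherian of finite global dimension, $M$ has a finite resolution by finitely generated graded projectives, so $M$ (placed in degree $0$) is a nonzero perfect complex whose only cohomology, $M$ in degree $0$, is finite-dimensional. Thus $M \in \mc{D}_\epsilon(A)$. The consequence is immediate: a connected graded noetherian AS regular algebra has finite global dimension and the nonzero finite-dimensional module $k = A/A_{\geq 1}$. For (2), regrading a finitely generated graded projective along $\phi \colon \mb{Z}^w \to \mb{Z}^n$ keeps it finitely generated projective (a free module $\bigoplus A(s_i)$ becomes $\bigoplus A(\phi(s_i))$), and the finite-dimensionality of each $H^0(X[n])$ is a condition on the total underlying space, hence unaffected by coarsening the grading; so a witness for $(A, \mb{Z}^w)$ is literally a witness for $(A, \mb{Z}^n)$.

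\emph{Part (3), the main step.} Given a nonzero perfect $X$ over $A$ with finite-dimensional cohomology, I would push it forward and set $Y = B \otimes_A X$. Since $B \otimes_A -$ sends finitely generated projective $A$-modules to finitely generated projective $B$-modules, $Y$ is perfect over $B$. Nonvanishing follows from a Nakayama argument: taking $X$ minimal (differentials with entries in $A_{\geq 1}$), the differentials of $Y$ have entries in $B_{\geq 1}$, so reducing modulo $B_{\geq 1}$ yields a complex with zero differential and nonzero terms, whence $H^*(Y) \neq 0$. For finite-dimensionality I would use the base-change spectral sequence $\Tor_p^A(B, H^q(X)) \Rightarrow H^{q-p}(Y)$: each $H^q(X)$ is finite-dimensional, and for finite-dimensional $N$ the module $\Tor_p^A(B, N)$ is finite-dimensional (resolving $B_A$ by finitely generated projectives over the noetherian algebra $A$, one sees that $\Tor_p^A(B,N)$ is a subquotient of a finite-dimensional space); since $X$ is bounded, only finitely many pairs $(p,q)$ with fixed $q-p$ contribute, so each $H^{q-p}(Y)$ is finite-dimensional. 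Hence $Y \in \mc{D}_\epsilon(B)$. The stated consequence follows by graded Noether normalization: an affine connected graded commutative $B$ is finite over a connected polynomial subalgebra $A = k[\theta_1, \dots, \theta_d]$ with the $\theta_i$ of positive degree, and $A$ satisfies the $\epsilon$-condition by (1).

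\emph{Parts (4) and (5).} For (4) the center $Z$ of $B$ is affine commutative graded with $\dim_k Z_0 < \infty$ (as $Z \subseteq B$ and $B$ is locally finite), so graded Noether normalization again produces a connected polynomial subalgebra $A \subseteq Z$ over which $Z$, and hence $B$, is module-finite; then (1) and (3) apply. For (5), I would prove (i)$\Rightarrow$(ii) by applying (2) to the summation homomorphism $\mb{Z}^2 \to \mb{Z}$; (iii)$\Rightarrow$(i) by taking witnesses $X_A, X_B$ and forming the external tensor product $X_A \otimes_k X_B$, a $\mb{Z}^2$-graded perfect complex over $A \otimes_k B$ with cohomology $H^*(X_A) \otimes_k H^*(X_B)$ (Künneth over the field $k$), visibly finite-dimensional and nonzero; and (ii)$\Rightarrow$(iii) by base change along the projection $A \otimes_k B \to A$ (and symmetrically $\to B$). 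For the last implication a witness $W$ base-changes to a perfect complex $W' = A \otimes_{A\otimes_k B}^L W$ over $A$; finite-dimensionality of $H^*(W')$ follows as in (3) (here $\Tor^{A \otimes_k B}(A, -)$ reduces, by flat change of rings along $B \hookrightarrow A \otimes_k B$, to $\Tor^B(k, -)$, which is finite-dimensional because $B$, being noetherian connected graded, is locally finite), and $W' \neq 0$ because $W \otimes^L k \neq 0$ (Nakayama) factors as $(W \otimes^L_{A \otimes_k B} A) \otimes_A^L k$. The consequence about $A[t]$ is the case $B = k[t]$.

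\emph{Part (6).} Here I would invoke the category equivalence $A\lGr \simeq A^{\tilde{\sigma}}\lGr$, $M \mapsto M^{\tilde{\sigma}}$, of \cite[Theorem 3.1]{Zh}. It is exact and is the identity on underlying graded vector spaces and morphisms, so it preserves finite-dimensionality; it sends the free module $A(j)$ to the free module $A^{\tilde{\sigma}}(j)$, hence finitely generated projectives to finitely generated projectives and perfect complexes to perfect complexes. The induced triangulated equivalence therefore restricts to an equivalence $\mc{D}_\epsilon(A) \simeq \mc{D}_\epsilon(A^{\tilde{\sigma}})$, so one side is nonzero if and only if the other is. The main obstacle in the whole argument is the control of cohomology under the push-forward and base-change functors in (3) and in (ii)$\Rightarrow$(iii) of (5): one must simultaneously guarantee nonvanishing (via Nakayama over the connected grading) and finite-dimensionality (via finiteness of the relevant $\Tor$ together with the boundedness that limits the spectral sequence to finitely many contributing terms), after which the remaining parts are formal consequences of graded Noether normalization and the twisting equivalence.
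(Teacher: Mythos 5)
Your overall architecture matches the paper's proof closely -- (1),(2) elementary, (3) via $Y = B \otimes^L_A X = B\otimes_A X$ with perfectness for free, finite-dimensionality of cohomology by reducing to finite-dimensional $\Tor$'s over the noetherian algebra $A$ (your hyper-Tor spectral sequence is an equivalent packaging of the paper's induction on $\sum_n \dim_k H^0(X[n])$, which reduces to $X = M$ the simple module), (4) via the center, (5) via (2), base change, and external tensor products, and (6) via the twisting equivalence of \cite[Theorem 3.1]{Zh}. But there is a genuine gap in your nonvanishing argument in part (3). You take $X$ minimal and claim that ``the differentials of $Y$ have entries in $B_{\geq 1}$, so reducing modulo $B_{\geq 1}$ yields a complex with zero differential and nonzero terms.'' This silently assumes $B$ is $\mb{N}$-graded, whereas in the statement $B$ is an arbitrary $\mb{Z}^w$-graded algebra (only $A$ is assumed connected $\mb{N}^w$-graded). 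For general $B$ the set $B_{\geq 1}$ of positive-total-degree elements is not a one-sided ideal, and the two-sided ideal generated by the images $\phi(A_{\geq 1})$ can be all of $B$: take $A = k[x]$ and $B = M_2(k)$ graded with $\deg e_{12} = 1$, $\deg e_{21} = -1$, $\deg e_{11} = \deg e_{22} = 0$, and $\phi(x) = e_{12}$; then $B$ is a finite graded right $A$-module, but $B\phi(x)B = B$ since $B$ is simple, so ``reduction modulo the ideal generated by positive-degree elements'' kills the whole complex. One cannot instead quotient by the one-sided ideal $B\phi(A_{\geq 1})$, because an arbitrary $B$-linear homotopy need not preserve the corresponding subcomplex, so contractibility of $Y$ does not pass to that quotient. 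The paper's argument avoids this entirely: letting $i$ be the top nonvanishing cohomological degree of $X$, right-exactness of $B\otimes_A -$ gives $H^i(Y) \cong B \otimes_A H^i(X)$; since $H^i(X)$ is a nonzero finite-dimensional graded module over the connected algebra $A$, it surjects onto a shift of the simple module $M = A/A_{\geq 1}$, and the one-sided graded Nakayama lemma (valid because $B$, being a nonzero finitely generated graded right $A$-module, has total degrees bounded below) gives $B \otimes_A M = B/B\phi(A_{\geq 1}) \neq 0$, whence $H^i(Y) \neq 0$. You should replace your minimality argument with this; the same repair applies wherever you reuse the minimal-complex reduction (your Nakayama step in (5) is safe, since there the ambient algebra $A \otimes_k B$ is connected $\mb{N}$-graded).

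Two smaller remarks. In (4) you apply graded Noether normalization directly to the center $Z$, but $Z_0$ may be strictly larger than $k$, and normalization is stated for connected algebras; the paper instead passes to $A = k \oplus Z_{\geq 1}$, which is affine (Artin--Tate, as $Z$ is module-finite over it) and connected, and then invokes the consequence of (3) -- you should insert this step, after which your route agrees with the paper's. In (5)(ii)$\Rightarrow$(iii) your explicit flat-base-change computation is correct but unnecessary: the paper simply applies part (3) to the surjection $A \otimes_k B \to A \otimes_k B/B_{\geq 1} \cong A$, which makes $A$ a cyclic, hence finitely generated, module; citing (3) as you proved it would shorten your argument and keep the finiteness bookkeeping in one place.
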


\begin{proof} 
(1,2) These are clear.

(3) Let $X$ be a nonzero $\epsilon$-object over $A$. Let $Y=B\otimes^L_A X$. 
We claim that $Y$ is a nonzero $\epsilon$-object over $B$. To see 
that $Y$ is nonzero in the derived category, we note since $X$ is perfect 
that $Y = B \otimes_A X$ as a tensor product of complexes. 
Then $H^i(Y)=B\otimes_A H^i(X)$, where $i$ is the integer such that 
$H^i(X)\neq 0$ and $H^j(X)=0$ for all $j>i$.   Let $\mf{m}$ be the unique graded maximal 
ideal of $A$, and let $M = A/\mf{m} = A_{(0,0, \dots, 0)}$ be the unique simple graded $A$-module.   Since $H^i(X)\neq 0$, 
there is a surjective $A$-module map $f:H^i(X) \to M$.    Since $B$ is a nonzero finitely generated 
graded right $A$-module, the graded Nakayama's Lemma implies that 
$B\otimes_A M\neq 0$.   Thus $B\otimes_A H^i(X)\neq 0$ and therefore $H^i(Y)\neq 0$. 

Next, since $X$ is a perfect complex of $A$-modules, $Y=B\otimes_A X$ 
is a perfect complex of $B$-modules. Last, to see that $Y$ is an 
$\epsilon$-object over $B$, we note that 
$H^0(Y[n])=H^n(Y)=\Tor^A_{-n}(B,X)$. So it suffices to show the 
assertion that $\Tor^A_{i}(B,X)$ is finite dimensional for all $i$. 
By exact sequences and induction on $\sum_n \dim_k H^0(X[n])$ (forgetting 
that $X$ is perfect), we may reduce to the case that $X=M$, where $M$ is the 
unique simple graded left $A$-module.   Since $A$ is noetherian, and 
$B_A$ is finitely generated, we may replace $B$ in $\mc{D}^{-}(A \lgr)$ 
by a graded projective resolution $P$ of $B_A$ such that each term in the 
projective resolution is finitely generated.  Then $Y = P \otimes_A M$ 
has finite dimensional cohomology as required.  

For the consequence, take $A$ to be a connected graded affine commutative
polynomial ring. By part (1), $A$ satisfies the $\epsilon$-condition.
By the above paragraph, every factor ring of $A$ satisfies the 
$\epsilon$-condition. 

(4) Let $Z$ be the center of $B$ and let $A=k\oplus Z_{\geq 1}$.  Since $Z$ is affine 
by assumption, it easily follows that $A$ is affine as well.
By part (3), $A$ satisfies the $\epsilon$-condition. Since $B$ is finite
over $Z$, and $Z$ is clearly finite over $A$, $B$ is finite over $A$. The assertion follows from
part (3). 

(5)  The implication (i) $\Rightarrow$ (ii) is immediate from part (2).  For 
(ii) $\Rightarrow$ (iii), using the $\mb{Z}$-graded homomorphism 
$A \otimes_k B \to A \otimes_k B/B_{\geq 1} \cong A \otimes_k k \cong A$, 
$A$ is a finitely generated $A \otimes_k B$-module.  Thus 
$(A, \mb{Z})$-satisfies the $\epsilon$-condition by part (3), and 
similarly for $(B, \mb{Z})$.  Now suppose that $(A, \mb{Z})$ and 
$(B, \mb{Z})$ satisfy the $\epsilon$-condition, and let $X$ and $Y$ be nonzero $\epsilon$-objects over $A$ 
and $B$ respectively. Then $X\otimes_k Y$ is a nonzero 
$\mb{Z}^2$-graded $\epsilon$-object over $A\otimes_k B$, proving 
(iii) $\Rightarrow$ (i). The consequence follows by taking $B=k[t]$.

(6) Note that the equivalence between $(A, \mb{Z}^w) \lGr$ and 
$(A^{\tilde{\sigma}}, \mb{Z}^w) \lGr$ is naturally extended to the
derived level. Then one checks that a nonzero object in 
$\D_{\epsilon}(A)$ maps to a nonzero object in 
$\D_{\epsilon}(A^{\tilde{\sigma}})$ under this equivalence.
\end{proof}

We can now prove the final major theorem of the paper.

\begin{theorem}
\label{xxthm5.3}
Let $A$ be connected graded noetherian AS Gorenstein.  Suppose 
that $\D_{\epsilon}(A) \neq 0$. Then $\hdet \mu_A = 1$.
As a consequence, if $A$ is connected graded noetherian 
AS regular, then $\hdet \mu_A= 1$.
\end{theorem}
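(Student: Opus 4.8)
The plan is to bootstrap from Theorem~\ref{xxthm4.1}, which already settles $\hdet(\mu_A) = 1$ in the special case that the Nakayama automorphism has the diagonal form $\mu_A = \xi_c$. The reason that case is tractable is that when $\mu_A = \xi_c$, the assignment $m \mapsto c^{|m|}m$ gives an isomorphism $X \to \Phi(X)$ for \emph{every} object $X$; thus every object is $\Phi$-plain, the hypothesis $\D_{\epsilon}(A) \neq 0$ already forces $\D^{pl}_{\epsilon}(A) \neq 0$, and Theorem~\ref{xxthm2.7} applies. For a general $\mu_A$ this breaks down, so the strategy is to replace $A$ by a closely related algebra whose Nakayama automorphism \emph{is} of this diagonal form, while keeping track of the homological determinant.

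The key tool is the graded twist. Given $A$ with Nakayama automorphism $\mu = \mu_A$, I would form a graded twist $B = A^{\tilde{\sigma}}$, choosing the twisting automorphism $\sigma$ in terms of $\mu$ so that the Nakayama automorphism of $B$ becomes a scaling $\xi_c$. Here one exploits that the scaling automorphisms $\xi_c$ are central in $\Autz(A)$, since they act by a scalar on each graded component; combined with the formula from \cite{RRZ} for the Nakayama automorphism of a graded twist in terms of $\mu$ and $\sigma$, this should let one cancel the ``non-diagonal part'' of $\mu$ outright and arrange $\mu_B = \xi_c$. One checks that the twist preserves connectedness, the noetherian property, and AS Gorenstein-ness (with the same AS index $\bfl$), and that by Lemma~\ref{xxlem5.2}(6) the $\epsilon$-condition is invariant under graded twists, so $\D_{\epsilon}(B) \neq 0$. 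Then Theorem~\ref{xxthm4.1} applies to $B$ and yields $\hdet_B(\mu_B) = c^{\bfl} = 1$.

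It remains to transfer this conclusion back to $A$, i.e.\ to relate $\hdet_A(\mu_A)$ to the scalar $c^{\bfl}$ computed on the twist. I would do this using the behavior of the homological determinant under graded twists from \cite{RRZ}, which—given the explicit choice of $\sigma$ and the centrality of $\xi_c$—should identify $\hdet_A(\mu_A)$ with $\hdet_B(\mu_B) = c^{\bfl}$; since the latter is forced to equal $1$, we conclude $\hdet(\mu_A) = 1$. \emph{I expect this transfer, together with pinning down the exact twist formula for the Nakayama automorphism, to be the main obstacle.} The subtle point is that the twist must genuinely reduce $\mu$ to scaling form rather than merely conjugating it (conjugation cannot move an arbitrary $\mu$ into the class of a $\xi_c$), and that it must leave $\bfl$ unchanged, so that the scalar produced by Theorem~\ref{xxthm4.1} matches $\hdet_A(\mu_A)$ on the nose.

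Finally, the stated consequence for AS regular algebras is immediate: by Lemma~\ref{xxlem5.2}(1) every connected graded noetherian AS regular algebra satisfies the $\epsilon$-condition, so $\D_{\epsilon}(A) \neq 0$, and the first part of the theorem then gives $\hdet(\mu_A) = 1$.
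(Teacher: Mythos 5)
Your overall skeleton is the same as the paper's: reduce to the case $\mu_A = \xi_c$ by twisting, check that the $\epsilon$-condition and $\hdet$ survive the reduction, and then invoke Theorem~\ref{xxthm4.1}(2); the deduction of the AS regular case from Lemma~\ref{xxlem5.2}(1) is also exactly what the paper does. But the crucial construction step has a genuine gap: a single $\mb{Z}$-graded twist of $A$ itself cannot, in general, convert $\mu_A$ into a scaling automorphism. The twist formula from \cite{RRZ} changes the Nakayama automorphism by composition with $\sigma^{\bfl}$ (up to a scalar automorphism $\xi$ involving $\hdet \sigma$), where $\bfl$ is the AS index. So to arrange $\mu_{A^{\sigma}} = \xi_c$ you would need to solve $\sigma^{\bfl} = \mu_A^{-1}$ modulo the subgroup of scaling automorphisms, i.e.\ you would need an $\bfl$-th root of $\mu_A$ inside $\Autz(A)$. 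Such roots need not exist: $\Autz(A)$ modulo scalars can be a small (even finite) group in which $\mu_A$ is not an $\bfl$-th power. Centrality of the $\xi_c$'s does not help with this divisibility obstruction, and you correctly sensed that this transfer step was the danger point.

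This is precisely why the paper's proof (following the proof of \cite[Lemma 6.2]{RRZ}) takes a longer route. One first passes to $A[s]$ if needed so that the AS index is not $-1$, then to $A[t]$ viewed as a $\mb{Z}^2$-graded algebra. The point of the dummy variable $t$ is that the AS index of $A[t]$ becomes $(\bfl, 1)$, so a $\mb{Z}^2$-graded twist by the commuting pair $(\operatorname{id}, \mu)$ changes the Nakayama automorphism by $\operatorname{id}^{\bfl} \circ \mu^{1} = \mu$ (up to scalars) --- the exponent $1$ in the second coordinate means no root extraction is ever required, and the twist cancels the non-diagonal part of $\mu_{A[t]}$ outright. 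One then regrades by total degree and performs one more $\mb{Z}$-graded twist to reach an algebra $B$ with $\mu_B = \xi_c$ and $\hdet \mu_B = \hdet \mu_A$. The bookkeeping you attributed to Lemma~\ref{xxlem5.2}(6) alone actually requires Lemma~\ref{xxlem5.2}(5) (polynomial extensions $A[s]$, $A[t]$ preserve the $\epsilon$-condition), Lemma~\ref{xxlem5.2}(2) (regrading along $\mb{Z}^2 \to \mb{Z}$ preserves it), and Lemma~\ref{xxlem5.2}(6) (twists preserve it), applied in that order along the chain of algebras. With $B$ in hand, the final appeal to Theorem~\ref{xxthm4.1}(2) is exactly as you wrote it.
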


\begin{proof}
The proof depends heavily on the proof of \cite[Lemma 6.2]{RRZ}.
The basic idea is to find a closely related AS Gorenstein 
algebra $B$ such that $\hdet \mu_B = \hdet \mu_A$ and 
$\mu_B = \xi_c$ for some $c$, and then apply Theorem~\ref{xxthm4.1}.  

In more detail, following the proof of \cite[Lemma 6.2]{RRZ}, first 
one replaces $A$ by a polynomial extension $A[s]$ if necessary to avoid 
the case of AS index $-1$.  Note that the hypothesis 
$\D_{\epsilon}(A) \neq 0$ still holds, by Lemma~\ref{xxlem5.2}(5).

Then one takes the algebra $A[t]$, as a $\mb{Z}^2$-graded algebra, 
and does a $\mb{Z}^2$-graded twist. Note that $(A[t], \mb{Z}^2)$ 
satisfies the $\epsilon$-condition by Lemma~\ref{xxlem5.2}(5), 
and so will a $\mb{Z}^2$-graded twist by Lemma~\ref{xxlem5.2}(6).  
Then one considers $A[t]$ as a $\mb{Z}$-graded algebra via the 
``total degree" homomorphism $\mb{Z}^2 \to \mb{Z}$; $(A[t], \mb{Z})$ 
satisfies the $\epsilon$-condition still by Lemma~\ref{xxlem5.2}(2).
Finally, one does a $\mb{Z}$-graded twist, arriving at an algebra $B$, 
where $(B, \mb{Z})$ satisfies the $\epsilon$-condition by 
Lemma~\ref{xxlem5.2}(6).  By the proof of \cite[Lemma 6.2]{RRZ}, 
$\mu_B = \xi_c$  and $\hdet \mu_B = \hdet \mu_A$.

Now by Theorem~\ref{xxthm4.1}(2), $\hdet \mu_B = c^{{\bfl}} = 1$, 
where ${\bfl}$ is the AS index of $B$.  So $\hdet \mu_A = 1$.

The consequence follows from Lemma \ref{xxlem5.2}(1).
\end{proof}

\begin{corollary}
\label{xxcor5.4}
If $A$ is connected $\mb{N}$-graded noetherian AS Gorenstein and has any of 
the following additional properties, then $\hdet \mu_A = 1$:  
\begin{enumerate}
\item[{\rm{(i)}}] 
$A$ is a graded twist of an algebra which is finite over its affine center; or 
\item[{\rm{(ii)}}] 
$A$ is a surjective image of a noetherian AS regular algebra.
\end{enumerate}
\end{corollary}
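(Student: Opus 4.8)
The plan is to reduce both statements to Theorem~\ref{xxthm5.3}. Since $A$ is assumed connected graded noetherian AS Gorenstein, the only additional hypothesis needed to conclude $\hdet \mu_A = 1$ is that $A$ satisfies the $\epsilon$-condition, namely $\D_{\epsilon}(A) \neq 0$. Thus in each case (i) and (ii) it suffices to establish the $\epsilon$-condition, after which Theorem~\ref{xxthm5.3} applies verbatim. Each verification is a direct appeal to one of the permanence properties recorded in Lemma~\ref{xxlem5.2}.

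For part (i), write $A = B^{\tilde{\sigma}}$ as a graded twist of an algebra $B$ that is finite over its affine center. A graded twist leaves the underlying $\mb{N}$-graded vector space unchanged, so $B$ is again locally finite $\mb{N}$-graded, and hence Lemma~\ref{xxlem5.2}(4) shows that $B$ satisfies the $\epsilon$-condition. Then Lemma~\ref{xxlem5.2}(6), which asserts that the $\epsilon$-condition is invariant under graded twisting, transfers this to $A = B^{\tilde{\sigma}}$. Combined with Theorem~\ref{xxthm5.3}, this gives $\hdet \mu_A = 1$.

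For part (ii), let $R \twoheadrightarrow A$ be a surjection of graded algebras with $R$ connected graded noetherian AS regular. By the consequence of Lemma~\ref{xxlem5.2}(1), such an $R$ satisfies the $\epsilon$-condition. Because the map $R \to A$ is surjective, $A$ is a cyclic, hence finitely generated, graded right $R$-module. Therefore Lemma~\ref{xxlem5.2}(3), applied to the $\mb{Z}$-graded algebra map $R \to A$, shows that $A$ satisfies the $\epsilon$-condition, and Theorem~\ref{xxthm5.3} again yields $\hdet \mu_A = 1$.

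The argument is essentially bookkeeping with the tools of Lemma~\ref{xxlem5.2}, and there is no substantial obstacle; the only points requiring care are confirming that the hypotheses of the relevant parts of that lemma are genuinely met — that $B$ inherits local finiteness and $\mb{N}$-gradedness from $A$ under the twist in (i), and that surjectivity in (ii) forces $A$ to be finitely generated over $R$ so that Lemma~\ref{xxlem5.2}(3) is available — both of which are immediate.
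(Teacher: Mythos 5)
Your proof is correct and follows exactly the paper's argument: reduce to Theorem~\ref{xxthm5.3} by verifying the $\epsilon$-condition, using Lemma~\ref{xxlem5.2}(4) and (6) for case (i) and Lemma~\ref{xxlem5.2}(1) and (3) for case (ii). The paper states this in two lines; your write-up simply spells out the same steps (including the correct observations that the twist preserves local finiteness and that surjectivity gives finite generation of $A$ as a right module over the regular algebra).
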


\begin{proof}
By Theorem \ref{xxthm5.3}, we just need to verify that $A$ satisfies 
the $\epsilon$-condition.   This holds by Lemma \ref{xxlem5.2}(4)(6) 
in case (i), and by Lemma~\ref{xxlem5.2}(1)(3) in case (ii).
\end{proof}

The following conjecture was made in \cite[Conjecture 6.4]{RRZ}.
\begin{conjecture}\label{xxcon5.5} Let $A$ be a noetherian connected graded 
AS Gorenstein algebra. 
Then $\hdet(\mu_A)=1$. 
\end{conjecture}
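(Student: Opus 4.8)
The plan is to reduce Conjecture~\ref{xxcon5.5} to the $\epsilon$-condition. Indeed, Theorem~\ref{xxthm5.3} already establishes $\hdet \mu_A = 1$ for every connected graded noetherian AS Gorenstein algebra $A$ satisfying $\D_{\epsilon}(A) \neq 0$, so the only remaining task is to prove that $\D_{\epsilon}(A) \neq 0$ always holds---in other words, that every such $A$ satisfies the $\epsilon$-condition. Concretely, one must produce a single nonzero perfect complex of graded $A$-modules whose total cohomology is finite dimensional; once this single homological input is supplied, Theorem~\ref{xxthm5.3} delivers $\hdet \mu_A = 1$ immediately.

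First I would try to exhaust the reduction techniques of Lemma~\ref{xxlem5.2}. Since the $\epsilon$-condition is preserved under polynomial extensions, regradings along group homomorphisms $\mb{Z}^w \to \mb{Z}^n$, graded twists, and passage to suitable finite algebras, one may replace $A$ by any algebra reachable from it through these operations. The proof of Theorem~\ref{xxthm5.3} itself---twisting $A[t]$ so that the Nakayama automorphism becomes some $\xi_c$---shows how flexible these moves are. The goal would be to reduce the general AS Gorenstein case to one of the settled situations of Corollary~\ref{xxcor5.4}, namely algebras finite over an affine center or surjective images of AS regular algebras.

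The genuinely hard part is the non-regular case. When $A$ has infinite global dimension the trivial module $k$ is no longer perfect, so the obvious candidate for a nonzero $\epsilon$-object disappears, and one must instead exhibit some nonzero bounded complex (or finite-dimensional module) of finite projective dimension, i.e.\ a nonzero object of $\D_{\perf}(A)$ lying in $\mc{E}(A)$. I would attempt to construct such an object directly from the Gorenstein structure: since $A$ has finite injective dimension $d$ with $\R^d\Gamma_{\fm_A}(A)^* \cong {}^{\mu}A^1(-\bfl)$, one could look for a finite-dimensional module $M$ for which $\RHom_A(M, A)$ is again finite dimensional and $M$ admits a finite free resolution, using local duality to control the cohomology. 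A more geometric route would be to use the noncommutative projective scheme of $A$ to manufacture a point-like module of finite projective dimension.

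However, no such construction is known in full generality, and this is precisely why the statement remains a conjecture rather than a theorem. The main obstacle is therefore the existence of a nonzero finite-dimensional $A$-module of finite projective dimension---equivalently, the non-vanishing of $\D_{\epsilon}(A)$---for an arbitrary noetherian connected graded AS Gorenstein algebra of infinite global dimension. Every ingredient beyond this single point is already in place: the homological determinant identity is purely a consequence of the Frobenius structure on the Ext-algebra $E(X)$ supplied by Theorem~\ref{xxthm2.7}, and the reduction machinery of Lemma~\ref{xxlem5.2} shows that the $\epsilon$-condition is a robust invariant, so any proof covering the remaining non-regular, not-module-finite algebras would settle the conjecture outright.
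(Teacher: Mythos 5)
Your proposal is correct and matches the paper's own position exactly: the statement is left as an open conjecture there, with Theorem~\ref{xxthm5.3} reducing it to the nonvanishing of $\D_{\epsilon}(A)$ (the $\epsilon$-condition), Lemma~\ref{xxlem5.2} and Corollary~\ref{xxcor5.4} verifying that condition in the common cases, and the general case posed as Question~\ref{xxque5.6}. Your diagnosis that the sole missing ingredient is a nonzero perfect complex with finite-dimensional cohomology over an arbitrary non-regular noetherian connected graded AS Gorenstein algebra is precisely the paper's open question, so you have correctly identified both the reduction and the genuine obstruction rather than claiming a proof that does not exist.
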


Most known AS Gorenstein algebras satisfy one of the 
conditions in Corollary~\ref{xxcor5.4}, and so 
we have now proved the conjecture for the most common kinds of examples.
Of course, the conjecture would be completely solved if there were a positive 
answer to the following question.

\begin{question}
\label{xxque5.6}
Let $A$ be a noetherian connected graded AS Gorenstein algebra.
Does $A$ satisfy the $\epsilon$-condition?
\end{question}

\providecommand{\bysame}{\leavevmode\hbox to3em{\hrulefill}\thinspace}
\providecommand{\MR}{\relax\ifhmode\unskip\space\fi MR }
\providecommand{\MRhref}[2]{%
  \href{http://www.ams.org/mathscinet-getitem?mr=#1}{#2}
}
\providecommand{\href}[2]{#2}


\end{document}